\date{}
\renewcommand\Re{\operatorname{\mathfrak{Re}}}
\renewcommand\Im{\operatorname{\mathfrak{Im}}}
\newcommand{\eps}{\varepsilon}
\newcommand{\spac}{\operatorname{span}_{\mathbb{C}}}
\newcommand{\id}{\operatorname{id}}
\newcommand{\tr}{\operatorname{tr}}
\newcommand{\Hom}{\operatorname{Hom}}
\newcommand{\ph}{\operatorname{\varphi}}
\newcommand{\mfrak}{\mathfrak}
\newcommand{\bslsh}{\backslash}
\newcommand*{\myfont}{\fontfamily{pag}\selectfont}
\newcommand\Sx{\text{{\myfont X}}}
\newcommand\So{\text{{\myfont O}}}
\newcommand{\AAQ}{\operatorname{\mathbb{A}_{\mathbb{Q}}}}
\newcommand{\sgn}{\operatorname{sgn}}
\newcommand{\Aut}{\operatorname{Aut}}
\newcommand{\Tr}{\operatorname{Tr}}
\newcommand{\diag}{\operatorname{diag}}
\newcommand{\pr}{\operatorname{pr}}
\newcommand{\Vol}{\operatorname{Vol}}
\newtheorem{lemma}{Lemma}[section]
\newtheorem{theorem}[lemma]{Theorem}
\newtheorem{corollary}[lemma]{Corollary}
\newtheorem{proposition}[lemma]{Proposition}
\theoremstyle{definition}
\newtheorem{definition}[lemma]{Definition}
\newtheorem{remark}[lemma]{Remark}
\numberwithin{equation}{section}
\begin{document}
\flushbottom

\title{Algebraicity of metaplectic $L$-functions}

\author{Salvatore Mercuri}
\email{smmercuri@gmail.com}
\address{Department of Mathematical Sciences,
	Durham University, \newline
	Stockton Road,\newline
	Durham DH1 3LE,\newline
	UK.}

\classification{11F67 (primary), 11F37}
\keywords{half-integral weight modular forms; special values; algebraicity.}
\thanks{Completed with the support of an EPSRC Doctoral Studentship (Grant No. 000118421)}

\begin{abstract}
Notable results on the special values of $L$-functions of Siegel modular 
forms were obtained by J. Sturm in the case when the degree $n$ is even 
and the weight $k$ is an integer. In this paper we extend this method to
half-integer weights $k$ and arbitrary degree $n$, 
determining the algebraic field in which they lie. This method hinges on the Rankin-Selberg method; our extension of this is aided by the theory of 
half-integral modular forms developed by G. Shimura. In the second half,  
an analogue of P. B. Garrett's conjecture is proved 
in this setting, a result that is of independent interest but that bears direct 
applications to our first results. It determines exactly how the decomposition of modular forms into cusp forms and Eisenstein series preserves algebraicity and, ultimately, the full range of special values.
\end{abstract}

\maketitle

\section{Introduction} % use lowercase except for proper names
\label{intro}

\noindent In the philosophy of the oft-touted Langlands program $L$-functions 
associated to motives on one side are paired off with $L$-functions attached to 
automorphic forms on the other. When it comes to Siegel modular forms of integral 
weight $k$ and arbitrary degree $n$, through Deligne's conjecture results on the 
motivic side are known to such an extent that the special values of their automorphic 
$L$-functions are at this point expected. However, in the case that $k$ is a half-integral 
weight, the corresponding motives are not even known to exist, so that results 
on special values of automorphic $L$-functions are in this case less expected.

Of primary interest in this paper are $L$-functions that we associate to Siegel 
modular forms of half-integral weight -- sobriquet \emph{metaplectic modular forms} -- and their special values, with the aim 
here being a precise determination of the number field in which these values lie. 
Prior work by Shimura in \cite{Shimurabook} established that these values 
belonged to the algebraic closure $\overline{\mathbb{Q}}$, with the Siegel modular 
forms being defined over an arbitrary totally real number field. In \cite{Bouganis}, Bouganis works further to specify 
the exact algebraic number field in which they lie, which kind of precision this 
paper also pursues, but in so doing some additional conditions on the characters 
of the modular forms were required (Theorem 6.2 (i)-(ii) of \cite{Bouganis}) which we avoid. For concreteness, we 
limit ourselves to Siegel modular forms defined over $\mathbb{Q}$, though it is believed that these results would easily generalise at least to 
the case of totally real number fields of class number one. 

This is a paper of two halves. In the first, Sections \ref{modform} -- \ref{algebraicitysection}, the method employed by Sturm in \cite{Sturm} is extended to the present case which is facilitated (the extension) by the integral expression (4.1) of Shimura's paper \cite{Shimuraexp}. This uses the Rankin-Selberg method to express the $L$-function of an eigenform $f$ as an integral $\langle f, \theta\mathcal{E}\rangle$ of $f$ against a theta series multiplied by a \emph{non-holomorphic} Eisenstein series. Holomorphic projection is applied to this latter form to produce a \emph{holomorphic} cusp form $K$ and a subsequent expression of the $L$-function in terms of $\langle f, K\rangle$; this projection is given in Sect. \ref{Hproj} and the altered integral expression in Sect. \ref{integralsection}. The special values determined  by this method are precisely those values of the Eisenstein series at which holomorphic projection to a \emph{cusp form} is applicable. Finally, algebraicity of quotients of the form
\[
	\langle f, g\rangle\mu(f)^{-1},
\]
where $g$ is a holomorphic form and $\mu(f)$ is a non-zero constant dependent only on the eigenclass of $f$, is proved. This yields algebraicity of these special values. 

Though the algebraicity of the values in the first half is very strong, the actual set of values produced is not optimal; it is smaller than the full range given in Theorem 28.8 of \cite{Shimurabook}. The set of values has been constrained by the requirement that holomorphic projection produce a \emph{cusp form} the removal of which is the focus of the second half, Sections \ref{eisensteinsection}--\ref{specialvalues}. Most of this is taken up by a proof of a particular case of Paul B. Garrett's conjecture that if $f$ has algebraic coefficients then its Klingen Eisenstein series $E(f)$ does too, \cite{Garrett}. This is done by a non-trivial extension of Harris' method found in \cite{Harris}, whose setting is integral weight and full level Siegel modular forms, and which was futher extended to more general automorphic forms that are associated to Shimura varieties in \cite{Harristwo}. Metaplectic modular forms do not associate to Shimura varieties, hence the non-triviality and, in Sect. \ref{eisensteinsection}, a precise field extension of $\mathbb{Q}$ is given through which the conjecture holds. The relevant corollary of this work is the means through which to specify an extension $\mathscr{L}/\mathbb{Q}$ whereby the well-known decomposition $\mathcal{M}_k = \mathcal{S}_k\oplus\mathcal{E}_k$ preserves algebraicity of the Fourier coefficients of the forms involved; we write this as
\begin{align} 
	\mathcal{M}_k(\mathscr{L}) = \mathcal{S}_k(\mathscr{L})\oplus\mathcal{E}_k(\mathscr{L}). \label{introdecomp}
\end{align}
Such a decomposition was already shown by Shimura in \cite{Shimurabook} when $\mathscr{L} = \overline{\mathbb{Q}}$. As has been alluded to supra, such a decomposition allows the determination of the full set of special values by stipulating that the projection of $\theta\mathcal{E}$ need only be a holomorphic modular form. In splitting up $K = K_{\mathcal{S}} + K_{\mathcal{E}}$ per the decomposition (\ref{introdecomp}) we are left with $\langle f, K_{\mathcal{S}}\rangle$ by orthogonality. The methods of the first half now follow giving the full set of special values, but with the slightly weaker algebraicity caused by the addition of $\mathscr{L}$.

\section{Modular forms of half-integral weight}
\label{modform}

To begin with, we run through some general groundwork and notation. Let $\AAQ$ and $\mathbb{I}_{\mathbb{Q}}$ denote the adele ring 
and idele group, respectively, of $\mathbb{Q}$. The set of Archimedean places is denoted by $\infty$ and 
the non-Archimedean places by $\mathbf{f}$. For any fractional ideal 
$\mfrak{r}$ of $\mathbb{Q}$ let $\mfrak{r}_p$ denote the completion (with respect to the $p$-adic absolute value) of the localisation of $\mfrak{r}$ at the prime $p$, which is an ideal of $\mathbb{Z}_p$, and understand $N(\mfrak{r})\in\mathbb{Q}_{\geq 0}$ to be the unique positive generator of $\mfrak{r}$. For any element $t\in\mathbb{I}_F$ we denote by $t\mfrak{r}$ 
the fractional ideal of $\mathbb{Q}$ such that $(t\mfrak{r})_p=t_p\mfrak{r}_p$ for any 
$p\in\mathbf{f}$. We recall the adelic norm 
\[ 
|t|_{\mathbb{A}} = \prod_v|t_v|_v,
\]
where the valuations $|\cdot|_v$ are normalised. Let $\mathbb{T}$ denote the unit 
circle; define three characters on $\mathbb{C}$, $\mathbb{Q}_p$, 
and $\mathbb{A}_{\mathbb{Q}}$ respectively, with images in $\mathbb{T}$, by
\begin{align*}
	e:z&\mapsto e^{2\pi iz}, \\
	e_p:x&\mapsto e(-\{x\}), \\
	e_{\mathbb{A}}:x&\mapsto e(x_{\infty})
\prod_{p\in\mathbf{f}}e_p(x_p),
\end{align*}
where $\{x\}$ denotes the fractional part of $x\in\mathbb{Q}_p$.
If $x\in \AAQ$ then we also put $e_{\mathbf{f}}(x)=e_{\mathbb{A}}(x_{\mathbf{f}})$
and $e_{\infty}(x)=e\left(x_{\infty}\right)$. For any matrix $x\in M_n(\mathbb{C})$ write 
$x>0$ ($x\geq 0$) to mean that $x$ is positive definite (respectively positive 
semi-definite); $|x|=\det(x)$ and $\|x\|=|\det(x)|$; and $\tilde{x} = (x^T)^{-1}$. For any collection of matrices $x_1, \dots, x_r$, let $\diag[x_1, \dots, x_r]$ denote the matrix whose $i$th diagonal block is $x_i$, for $1\leq i\leq r$, and is zero everywhere else. If $\alpha\in GL_{2n}(F)$ then 
put
\[
	\alpha=\begin{pmatrix} a_{\alpha} & b_{\alpha} \\ c_{\alpha} & d_{\alpha}\end{pmatrix},
\]
where $a_{\alpha}, b_{\alpha}, c_{\alpha}, d_{\alpha}\in M_n(F)$. We define an 
algebraic group $G$, subgroups $P, \Omega\leq G$, and the generalised upper 
half-plane $\mathbb{H}_n$ by
\begin{align*}
	G&:=Sp_n(\mathbb{Q})=\{\alpha\in GL_{2n}(\mathbb{Q})\mid \alpha^T\iota\alpha=\iota\}, \hspace{20pt} 
\iota:=\begin{pmatrix} 0 & -I_n \\ I_n & 0\end{pmatrix}, \\
	P&:=\{\alpha\in G\mid c_{\alpha}=0\}, \\
	\Omega&:=\{\alpha\in G_{\mathbb{A}}\mid \det(c_{\alpha})\in \mathbb{I}_{\mathbb{Q}}\}, \\
	\mathbb{H}_n&:=\{z=x+iy\in M_n(\mathbb{C})\mid z^T=z, y>0\},
\end{align*}
where $G_{\mathbb{A}} := Sp_n(\AAQ)$ denotes the adelization of $G=Sp_n(\mathbb{Q})$.

A half-integral weight is an element $k\in\mathbb{Q}$ such 
that $k-\frac{1}{2}\in\mathbb{Z}$. 
The factor of automorphy of a half-integral weight will involve taking a square root, and
to choose such a root satisfactorily we make use of the metaplectic group. This is understood
as the double cover of $Sp_n$ and is denoted $Mp_n$. Set $M_p := Mp_n(\mathbb{Q}_p)$ for all
$p$ and $M_{\mathbb{A}}$ to be the adelization. 

We have natural projections $\pr_{\mathbb{A}}:M_{\mathbb{A}}\to G_{\mathbb{A}}$ 
and $\pr_p:M_p\to G_p$ which are both denoted by $\pr$ when the context is clear. There is a natural lift 
$r:G\to M_{\mathbb{A}}$ through which we can and do view $G$ as a subgroup of 
$M_{\mathbb{A}}$. There exist further lifts $r_P:P_{\mathbb{A}}\to M_{\mathbb{A}}$ 
and $r_{\Omega}:\Omega\to M_{\mathbb{A}}$ which are equal to $r$ on $P$ and 
$G\cap \Omega$ respectively, and such that
\[ 
	r_{\Omega}(\alpha\beta\gamma)=r_P(\alpha)r_{\Omega}(\beta)r_P(\gamma) 
\]
for $\alpha, \beta\in P_{\mathbb{A}}$ and $\beta\in\Omega$, \cite[p. 24]{Shimurahalf}.

Recall that there is a natural action of $Sp_n(\mathbb{R})$ on 
$\mathbb{H}_n$ given by 
\[ 
	\gamma\cdot w:=(a_{\gamma}z+b_{\gamma})(c_{\gamma}z+d_{\gamma})^{-1} 
\]
for $\gamma\in Sp_n(\mathbb{R}), z\in\mathbb{H}_n$, and further define 
\begin{align*}
	\Delta(z)&:=|\Im(z)|, \\
	j(\gamma, z)&:=|c_{\gamma}z+d_{\gamma}|.
\end{align*}
If, now, $\alpha\in G_{\mathbb{A}}$ then 
$\alpha_{\infty}\in Sp_n(\mathbb{R})$, so we naturally extend 
the above
\begin{align*}
	\alpha\cdot z&:=\alpha_{\infty}\cdot z, \\
	\Delta(z)&:=\Delta(z_{\infty}), \\
	j(\alpha, z)&:=j(\alpha_{\infty}, z_{\infty}).
\end{align*}

For any two fractional ideals $\mfrak{x}, \mfrak{y}$ of $\mathbb{Q}$ such that 
$\mfrak{xy}\subseteq\mathbb{Z}$, congruence subgroups are defined by the following subsets of $G_p$, $G_{\mathbb{A}}$, and $G$ 
respectively, by
\begin{align*}
	D_p[\mfrak{x}, \mfrak{y}]&:=\{x\in G_p\mid a_x, d_x\in M_n(\mathbb{Z}), b_x\in 
M_n(\mfrak{x}_p), c_x\in M_n(\mfrak{y}_p)\}, \\
	D[\mfrak{x}, \mfrak{y}]&:=Sp_n(\mathbb{R})\prod_{p}
D_p[\mfrak{x}, \mfrak{n}],\\
	\Gamma[\mfrak{x}, \mfrak{y}] &:= G\cap D[\mfrak{x}, \mfrak{y}].
\end{align*}
Typically these will take the form $\Gamma[\mfrak{b}^{-1}, \mfrak{bc}]$ for certain fractional ideals $\mfrak{b}$ and integral ideals $\mfrak{c}$.

Take a half-integral weight $k$ and put $[k]:=k-\frac{1}{2}\in\mathbb{Z}$; if $\ell\in\mathbb{Z}$ then $[\ell] := \ell$.
Our factor of automorphy for modular forms of half-integral weight will come in 
two parts, one of which is the familiar factor of weight $[k]$ and the 
other acts as a factor of automorphy of weight $\frac{1}{2}$. 
The major caveat in this setting is that the factor of weight $\frac{1}{2}$ is only 
definable for a particular subset $\mfrak{M}\subseteq M_{\mathbb{A}}$ given by
\begin{align*}
	C_p^{\theta}&=\{\xi\in D_p[1, 1]\mid (\alpha_{\xi}b_{\xi}^T)_{ii}
\in 2\mathbb{Z}_p, (c_{\xi}d_{\xi}^T)_{ii}\in 2\mathbb{Z}_p, 1\leq i\leq n\},  \\
	C^{\theta}&=Sp_n(\mathbb{R})\prod_{p}C_p^{\theta}, \\
	\mfrak{M}&=\{\sigma\in M_{\mathbb{A}}\mid \alpha=\pr(\sigma)\in P_{\mathbb{A}}C^{\theta}\}.
\end{align*}
So in considering modular forms of half-integral weight, we must ensure that $D[\mfrak{b}^{-1}, \mfrak{bc}]\subseteq\mfrak{M}$.

For any $\sigma\in M_{\mathbb{A}}$ we set $x_{\sigma} = x_{\alpha}$ where $x\in\{a, b, c, d\}$ and $\alpha=\pr(\sigma)\in G_{\mathbb{A}}$; define $\sigma\cdot z=\alpha\cdot z$ for $z\in\mathbb{H}_n$. If $\sigma\in\mfrak{M}$ then we may define a holomorphic function 
$h_{\sigma}=h(\sigma, \cdot):\mathbb{H}_n\to\mathbb{C}$ 
satisfying the following properties, the proofs for which we refer the reader to 
\cite[pp. 294--295]{Shimuraold}:
\begin{align}
	&h(\sigma, z)^2 = \zeta j(\pr(\sigma), z)\ \text{for a constant 
$\zeta\in\mathbb{T}$; $h(\sigma, z)\in\mathbb{T}$ if $\pr(\sigma)_{\infty}=I_{2n}$}; \label{h1} \\
	&h(\rho\sigma\tau, z) = h(\rho, z)h(\sigma, \tau z)h(\tau, z)\
\text{if $\pr(\rho)\in P_{\mathbb{A}}, \pr(\tau)\in C^{\theta}$}.  \label{h3}
\end{align}
The factor of automorphy for half-integral weights $k$ is then given as
\[ 
	j_{\sigma}^k(z)=h_{\sigma}(z)j(\alpha, z)^{[k]},
\]
where $\sigma\in\mfrak{M}, \alpha=\pr(\sigma)\in G_{\mathbb{A}},$ and $z\in\mathbb{H}_n$. If $\ell\in\mathbb{Z}$ then the factor of automorphy is defined as usual
\[
	j_{\alpha}^{\ell}(z) : =j(\alpha, z)^{\ell},
\]
where $\alpha\in G_{\mathbb{A}}$ and $z\in\mathbb{H}_n$.

Given $\kappa\in\frac{1}{2}\mathbb{Z}$ (integral or half-integral), a function $f:\mathbb{H}_n\to\mathbb{C}$, and a $\xi\in G_{\mathbb{A}}$ or $\mfrak{M}$ according as $\kappa\in\mathbb{Z}$ nor not, we define the \emph{slash operator} as 
\[
	(f||_{\kappa}\xi)(z) = j_{\xi}^{\kappa}(z)^{-1}f(\xi\cdot z),
\]
for $z\in\mathbb{H}_n$. If $\Gamma\leq G$ is a congruence subgroup such that $\Gamma\leq\mfrak{M}$, then let $C_{\kappa}^{\infty}(\Gamma)$ denote the set of analytic functions $\mathbb{H}_n\to\mathbb{C}$ 
that satisfy $f||_{\kappa}\xi=f$ for any $\xi\in\Gamma$. Let $\mathcal{M}_{\kappa}(\Gamma)\subseteq C_{\kappa}^{\infty}(\Gamma)$ 
be the subspace of holomorphic functions, $\mathcal{S}_{\kappa}(\Gamma)$ be 
the subspace of cusp forms, and write
\[
	\mathcal{M}_{\kappa}=\bigcup_{\Gamma}\mathcal{M}_{\kappa}(\Gamma),
\hspace{20pt}\mathcal{S}_{\kappa}=\bigcup_{\Gamma}\mathcal{S}_{\kappa}(\Gamma),
\]
where the union is taken over all congruence subgroups of $G$.

The restrictions of a Hecke character $\psi:\mathbb{I}_{\mathbb{Q}}/\mathbb{Q}^{\times}\to\mathbb{T}$ of $\mathbb{Q}$ to $\mathbb{Q}_p^{\times}$, $\mathbb{Q}_{\infty}^{\times}$, and $\mathbb{Q}_{\mathbf{f}}^{\times}$ are denoted $\psi_p, \psi_{\infty}$, and $\psi_{\mathbf{f}}$ respectively. We say $\psi$ is \emph{normalised} if $\psi_{\infty}(x) = \sgn(x_{\infty})^t$ for some $t\in\mathbb{Z}$; we always assume our Hecke characters to be normalised. 

Now take a normalised Hecke character with the following properties:
\begin{align}
	\psi_p(a) &= 1\ \text{if $a\in\mathbb{Z}_p^{\times}$ and $a - 1\in\mfrak{c}_p$}, \label{char1}\\
	 \psi_{\infty}(x)^n &= \sgn(x_{\infty})^{n[\kappa]}. \label{char2}
\end{align}
Assume that $\mfrak{b}^{-1}\subseteq 2\mathbb{Z}$ and $\mfrak{bc}\subseteq 2\mathbb{Z}$, then $\Gamma = \Gamma[\mfrak{b}^{-1}, \mfrak{bc}]\leq\mfrak{M}$. Let
$C_{\kappa}^{\infty}(\Gamma, \psi)$ denote the space of all $F\in C_{\kappa}^{\infty}$ such that
\[
	F||_{\kappa}\gamma=\psi_{\mfrak{c}}(|a_{\gamma}|)F
\]
for all $\gamma\in\Gamma$, where $\psi_{\mfrak{c}}=\prod_{p\mid\mfrak{c}}\psi_p$. Put $\mathcal{M}_{\kappa}(\Gamma, \psi) = C_{\kappa}(\Gamma, \psi)\cap\mathcal{M}_{\kappa}$ and $\mathcal{S}_{\kappa}(\Gamma, \psi) = C_{\kappa}(\Gamma, \psi)\cap\mathcal{S}_{\kappa}$.

Understand $\pr = \id$ if $\kappa\in\mathbb{Z}$. If $f\in\mathcal{M}_{\kappa}(\Gamma, \psi)$ then its adelisation, $f_{\mathbb{A}}:\pr^{-1}(G_{\mathbb{A}}) \to\mathbb{C}$, is a function defined by
\[
	f_{\mathbb{A}}(x) = \psi_{\mfrak{c}}(|d_w|)(f||_{\kappa}w)(\mathbf{i}),
\]
where $x = \alpha w$ for $\alpha\in G$ and $w\in \pr^{-1}(D)$, and $\mathbf{i} = iI_n$.
We have that
\begin{align*}
	f_{\mathbb{A}}(\alpha xw)=\psi_{\mfrak{c}}(|d_w|)j_w^{\kappa}(\mathbf{i})^{-1}f_{\mathbb{A}}(x),
\end{align*}
if $w\cdot\mathbf{i}=\mathbf{i}, \pr(w)\in D[\mfrak{b}^{-1}, \mfrak{bc}]$, and $\alpha\in G$; the above goes conversely (\cite[p. 537]{Shimuraint}).

We define spaces of symmetric matrices as follows
\begin{align*}
	S&:=\{\xi\in M_n(\mathbb{Q})\mid \xi^T=\xi\}, \hspace{20pt} 
&&\hspace{10pt}S_+:=\{\xi\in S\mid \xi\geq 0\}, \\
	S^{\triangledown} &:= \{. \xi\in S\mid \xi_{ii}\in\mathbb{Z}, \xi_{ij}\in\frac{1}{2}\mathbb{Z}, i < j\}, && \hspace{10pt} S_+^{\triangledown} := S^{\triangledown}\cap S_+, \\
	S(\mfrak{r})&:=S\cap M_n(\mfrak{r}), 
&&S_{\mathbf{f}}(\mfrak{r}):=\prod_{p}S(\mfrak{r})_p,
\end{align*}
for any fractional ideal $\mfrak{r}$ of $\mathbb{Q}$. 

\begin{theorem}\label{fourierexpth}[Shimura, \cite{Shimurahalf}, p. 27] Let $\kappa\in\frac{1}{2}\mathbb{Z}$, and put $D = D[\mfrak{b}^{-1}, \mfrak{bc}]$, $\Gamma = \Gamma[\mfrak{b}^{-1}, \mfrak{bc}]$ (assuming that both are contained in $\mfrak{M}$ if $\kappa\notin\mathbb{Z}$). Let $f\in\mathcal{M}_{\kappa}(\Gamma, \psi), q\in GL_n(\mathbb{A}_{\mathbb{Q}})$, and $s\in S_{\mathbb{A}}$, then the adelic Fourier expansion of $f$ is given as
\begin{align*}
	f_{\mathbb{A}}\left(r_P\begin{pmatrix} q & s\tilde{q} \\ 0 & \tilde{q}\end{pmatrix}\right)
= |q_{\infty}|^{[\kappa]}\|q_{\infty}\|^{\kappa-[\kappa]}\sum_{\tau\in S_+}
c(\tau, q; f)e_{\infty}(\tr(\mathbf{i}q^T\tau q))e_{\mathbb{A}}(\tr(\tau s)),
\end{align*}
where $c_f(\tau, q; f) = c_f(\tau, q)\in\mathbb{C}$ and recall $\tilde{q} = (q^T)^{-1}$. Furthermore, the coefficients obey the following:
\begin{enumerate}[(i)]
	\item $c_f(\tau, q)\neq 0$ only if $e_{\mathbb{A}}(\tr(q^T\tau qs))=1$ 
for all $s\in S_{\mathbf{f}}(\mfrak{b}^{-1})$;
	\item $c_f(\tau, q)=c_f(\tau, q_{\mathbf{f}})$;
	\item $c_f(b^T\tau b, q)=|b|^{[\kappa]}\|b\|^{\kappa-[\kappa]}c_f(\tau, bq)$ 
for any $b\in GL_n(\mathbb{Q})$;
	\item $\psi_{\mathbf{f}}(|a|)c_f(\tau, qa)=c_f(\tau, q)$ for any 
$\diag[a, \tilde{a}]\in D[\mfrak{b}^{-1}, \mfrak{bc}]$;
	\item if 
$\beta\in G\cap \diag[r, \tilde{r}]D[\mfrak{b}^{-1}, \mfrak{bc}]$ and $r\in GL_n(\mathbb{A}_F)$, then 
\[
	j_{\beta}^{\kappa}(\beta^{-1}z)f(\beta^{-1}z)=\psi_{\mfrak{c}}(|d_{\beta}r|)
\sum_{\tau\in S_+}c_f(\tau, r)e_{\infty}(\tr(\tau z)).
\]
\end{enumerate}
\end{theorem}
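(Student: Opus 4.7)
My plan is to view $f_{\mathbb{A}}$ as a function on the Siegel parabolic and exploit the two distinct invariances it possesses: left-invariance under $G$ (in particular, under the unipotent subgroup of $P$) and right-invariance (up to the character and factor of automorphy) under the metaplectic cover of $D[\mathfrak{b}^{-1},\mathfrak{bc}]$. I would write the argument in four phases.

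\emph{Phase 1: Fourier expansion in the $S$-variable.} Fix $q\in GL_n(\AAQ)$ and view
\[
    F_q(s) \;:=\; f_{\mathbb{A}}\!\left(r_P\!\begin{pmatrix} q & s\tilde{q} \\ 0 & \tilde{q}\end{pmatrix}\right)
\]
as a function of $s\in S_{\mathbb{A}}$. The unipotent element $\begin{pmatrix} I & \sigma \\ 0 & I\end{pmatrix}$ lies in $G$ for every $\sigma\in S(\mathbb{Q})$, and the cocycle property (\ref{h3}) applied to $r_P$ shows that left-multiplication translates $s\mapsto s+q\sigma q^T$, leaving $F_q$ invariant. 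Thus $F_q$ descends to a function on the compact abelian group $S_{\mathbb{Q}}\backslash S_{\mathbb{A}}$, and Pontryagin duality gives
\[
    F_q(s) \;=\; \sum_{\tau\in S}\, a_q(\tau)\, e_{\mathbb{A}}(\tr(\tau s)),
\]
with $a_q(\tau)=\int_{S_{\mathbb{Q}}\backslash S_{\mathbb{A}}} F_q(s)\,e_{\mathbb{A}}(-\tr(\tau s))\,ds$.

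\emph{Phase 2: Identifying the archimedean factor.} Specialise by taking $s$ with $s_{\mathbf f}=0$: then $r_P$ of the matrix above lies in the archimedean part, and the definition of $f_{\mathbb{A}}$ together with the explicit action of $P_\infty$ on $\mathbb{H}_n$ (which sends $\mathbf{i}$ to $q_\infty q_\infty^T\cdot i + s_\infty$) produces exactly the classical Fourier series of $f||_\kappa p_\infty$. This comparison pins down the archimedean component of $a_q(\tau)$ as
\[
    |q_\infty|^{[\kappa]}\|q_\infty\|^{\kappa-[\kappa]}\, e_\infty(\tr(\mathbf{i}q^T\tau q)),
\]
the remaining coefficient being purely finite-adelic; one then defines $c_f(\tau,q)$ to be this finite-adelic remainder, which by construction depends only on $q_{\mathbf f}$, giving (ii). The fact that the sum is supported on $\tau\in S_+$ follows from holomorphy and the boundedness of $f$ at every cusp, as in the classical case, coupled with property (\ref{h1}) to guarantee that the factor $h_\sigma(\mathbf{i})$ is a unit modulus constant.

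\emph{Phase 3: Deriving (i), (iii), (iv).} Property (i) is immediate from invariance of $F_q$ under $s\mapsto s+s'$ for $s'\in S_{\mathbf f}(\mathfrak{b}^{-1})$, the latter being realised by right-multiplication by $\begin{pmatrix} I & s' \\ 0 & I\end{pmatrix}\in D[\mathfrak{b}^{-1},\mathfrak{bc}]$ via (\ref{h3}) again. Properties (iii) and (iv) come from substituting $q\mapsto bq$ and $q\mapsto qa$ respectively and using the identity
\[
    \begin{pmatrix} bq & s\widetilde{bq} \\ 0 & \widetilde{bq}\end{pmatrix}
    \;=\; \begin{pmatrix} b & 0 \\ 0 & \tilde{b}\end{pmatrix}\begin{pmatrix} q & (b^{-1}sb^{-T})\tilde{q} \\ 0 & \tilde{q}\end{pmatrix},
\]
together with the parabolic lift; (iii) picks up the determinant factors from $j^\kappa$ of $\diag[b,\tilde{b}]$ and a change of Fourier variable $\tau\mapsto b^T\tau b$, while (iv) picks up $\psi_{\mathbf f}(|a|)$ from the right-invariance formula in the definition of $f_{\mathbb{A}}$.

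\emph{Phase 4: Reduction to the classical expansion, property (v).} Write $\beta=\diag[r,\tilde{r}]w$ with $w\in D[\mathfrak{b}^{-1},\mathfrak{bc}]$, then evaluate $f_{\mathbb{A}}$ at $\beta^{-1}\cdot r_P(\diag[r,\tilde{r}])\cdot\begin{pmatrix} I & sI \\ 0 & I\end{pmatrix}$ in two ways: via left-invariance under $\beta\in G$ on the one hand, and via the adelic expansion already established on the other. Unwinding the definition of $f_{\mathbb{A}}$ on the left side yields $j_\beta^\kappa(\beta^{-1}z)^{-1}f(\beta^{-1}z)$ times $\psi_{\mathfrak c}(|d_\beta r|)$, while the right side produces $\sum c_f(\tau,r)e_\infty(\tr(\tau z))$; equating them gives (v). The step that will require the most care is Phase 2, where matching the adelic coefficient with the classical expansion for a half-integral $\kappa$ forces one to track the constant $\zeta\in\mathbb{T}$ in (\ref{h1}) through the lift $r_P$, and to verify that the resulting scalar is precisely $|q_\infty|^{[\kappa]}\|q_\infty\|^{\kappa-[\kappa]}$ rather than some other eighth-root-of-unity twist — this is where the metaplectic subtlety enters.
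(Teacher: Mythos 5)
The paper itself proves nothing here: the statement is quoted from Shimura, and the sentence immediately following it defers the entire argument to Proposition 1.1 of \cite{Shimurahalf}. So the only benchmark is Shimura's proof, and your outline does follow that same standard route --- Fourier analysis of $F_q$ on the compact group $S(\mathbb{Q})\backslash S_{\mathbb{A}}$, holomorphy to pin down the archimedean shape of the coefficients and the support on $S_+$, and the left/right invariance properties of $f_{\mathbb{A}}$ to extract (i)--(v). Two of your steps, however, are wrong as written.

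First, your translation bookkeeping is interchanged. With the paper's parametrization one computes
\[
	\begin{pmatrix} I & \sigma \\ 0 & I\end{pmatrix}\begin{pmatrix} q & s\tilde q \\ 0 & \tilde q\end{pmatrix} = \begin{pmatrix} q & (s+\sigma)\tilde q \\ 0 & \tilde q\end{pmatrix},
	\qquad
	\begin{pmatrix} q & s\tilde q \\ 0 & \tilde q\end{pmatrix}\begin{pmatrix} I & s' \\ 0 & I\end{pmatrix} = \begin{pmatrix} q & (s+qs'q^T)\tilde q \\ 0 & \tilde q\end{pmatrix}.
\]
So \emph{left} multiplication by rational unipotents gives the plain translations $s\mapsto s+\sigma$ --- this, not your $s\mapsto s+q\sigma q^T$, is what makes $F_q$ a function on $S(\mathbb{Q})\backslash S_{\mathbb{A}}$ (invariance under $\{q\sigma q^T\mid\sigma\in S(\mathbb{Q})\}$ would be invariance under a different lattice when $q$ is adelic) --- while \emph{right} multiplication by $s'\in S_{\mathbf{f}}(\mathfrak{b}^{-1})$ gives $s\mapsto s+qs'q^T$. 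The conjugation by $q$ in the latter is precisely the source of the condition $e_{\mathbb{A}}(\tr(q^T\tau qs'))=1$ in (i); your Phase 3, read literally, produces $e_{\mathbb{A}}(\tr(\tau s'))=1$, which is not (i) and is incompatible with it.

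Second, the opening claim of Phase 2 --- that taking $s_{\mathbf{f}}=0$ puts $r_P$ of the matrix in the archimedean part --- is false whenever $q_{\mathbf{f}}\neq 1$: the finite component $\diag[q_{\mathbf{f}},\tilde q_{\mathbf{f}}]$ survives, and when $q_p\notin GL_n(\mathbb{Z}_p)$ it does not even lie in $D$. To unwind the definition $f_{\mathbb{A}}(x)=\psi_{\mathfrak{c}}(|d_w|)(f||_{\kappa}w)(\mathbf{i})$ one must first write $x=\alpha w$ with $\alpha\in G$, $w\in\pr^{-1}(D)$ (strong approximation), and what then appears is the classical expansion of the translate $f||_{\kappa}\alpha^{-1}$, i.e.\ the expansion of $f$ at the cusp determined by $q_{\mathbf{f}}$, not of $f$ itself; that is exactly the content of (v), so Phase 2 in the stated generality presupposes Phase 4. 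The repair is either to restrict Phase 2 to $q_{\mathbf{f}}\in\prod_p GL_n(\mathbb{Z}_p)$ and handle general $q$ together with (v), or to argue purely from holomorphy: for fixed finite data, $F_q$ is, up to the $j^{\kappa}$-factor of the archimedean parabolic, a holomorphic function of $z=iq_\infty q_\infty^T+s_\infty$, which forces the $\tau$-coefficient to be a constant multiple of $|q_\infty|^{[\kappa]}\|q_\infty\|^{\kappa-[\kappa]}e_\infty(\tr(\mathbf{i}\,q^T\tau q))$ with the constant depending only on $(\tau,q_{\mathbf{f}})$ --- which is (ii). With these two points repaired, your Phases 3 and 4 (properties (iii), (iv), (v)) go through essentially as you describe, and your closing caution about tracking the constant of (\ref{h1}) through $r_P$ is well placed: that is what Shimura's explicit normalization of $h(r_P(\cdot),z)$ on $P_{\mathbb{A}}$ handles.
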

The proof of this expansion and the subsequent properties can be found in Proposition 1.1 of 
\cite{Shimurahalf}. The coefficients $c_f(\tau, 1)$ correspond to the usual Fourier coefficients of $f$. By (i) of the above theorem the coefficients $c_f(\tau, 1)$ of $f$ are zero unless $\tau\in N(\mfrak{b})S^{\triangledown}$.

For any two $f, g\in\mathcal{M}_{\kappa}(\Gamma, \psi)$, where $\kappa\in\frac{1}{2}\mathbb{Z}$, 
define the Petersson inner product
\[
	\langle f, g\rangle=\Vol(\Gamma\bslsh\mathbb{H}_n)^{-1}
\int_{\Gamma\bslsh\mathbb{H}_n}f(z)\overline{g(z)}\Delta(z)^{\kappa}d^{\times}z,
\]
in which
\[
	d^{\times}z=\Delta(z)^{-n-1}\bigwedge_{p\leq q}
(dx_{pq}\wedge dy_{pq}),
\]
for $z=(x_{pq}+iy_{pq})_{p, q=1}^n\in\mathbb{H}_n$.

The elements of $\Aut(\mathbb{C})$ act on the space of modular forms in the usual way. That is, 
if $f\in\mathcal{M}_{\kappa}(\Gamma, \psi)$ 
has Fourier coefficients $c_f(\tau, 1)$ for $\tau\in S_+$ then $f^{\sigma}\in
\mathcal{M}_{\kappa}(\Gamma, \psi^{\sigma})$ is the modular form whose
Fourier coefficients are $c_f(\tau, 1)^{\sigma}$ for all $\tau\in S_+$. 

\section{Holomorphic projection}
\label{Hproj}

Assume that $(\mfrak{b}^{-1}, \mfrak{bc})\subseteq 2\mathbb{Z}\times 2\mathbb{Z}$ 
and put $\Gamma = G\cap D[\mfrak{b}^{-1}, \mfrak{bc}]$. Suppose that $F\in C_k^{\infty}(\Gamma, \psi)$, we say 
that $F$ is of bounded growth if for all $\eps > 0$ we have
\[
	\int_{X}\int_Y|F(z)|\Delta(z)^{k-1-n}e^{-\eps\tr(\Im(z))}dy dx <\infty,
\]
where 
\begin{align*}
	dy&=\bigwedge_{p\leq q}dy_{pq}, \hspace{20pt} dx = \bigwedge_{p\leq q}
dx_{pq}, \hspace{20pt} d^{\times}y=\Delta(z)^{-\frac{n+1}{2}}dy, \\
	Y&=\{y\in M_n(\mathbb{R})\mid y=y^T, y>0\}, \\
	X&=\{x\in M_n(\mathbb{R})\mid x=x^T, |x_{ij}|\leq\tfrac{1}{2}\ \forall\ i, j\}.
\end{align*}
If $F\in C_k^{\infty}(\Gamma)$ then, by Theorem \ref{fourierexpth}, it has an absolutely convergent Fourier 
expansion of the form
\[
	F(z)=\sum_{\tau\in S_{\mfrak{b}}^{\triangledown}} c_F(\tau, y)e(\tr(\tau x)),
\]
where $S_{\mfrak{b}}^{\triangledown}$ is the set of all $\tau\in S$ such that $\tau\in N(\mfrak{b})S^{\triangledown}$, and $c_F(\tau, y)$ are smooth functions of $y$ having values in $\mathbb{C}$. If $f$ is holomorphic then it has a Fourier expansion of the form
\[
	f(z)=\sum_{0\leq\tau\in S_{\mfrak{b}}^{\triangledown}} c_f(\tau, 1)e(\tr(\tau z)).
\]
The following theorem extends to half-integral 
$k$ the notion of holomorphic projection given in Theorem 1 of \cite{Sturm}
when $k$ is integral.

\begin{theorem}\label{holoproj} 
Assume that $k>2n$ and that $F$ 
is of bounded growth. For any $0<\tau\in S_{\mfrak{b}}^{\triangledown}$ set
\begin{align*}
	\mu(k, n)&:=\Gamma_n\left(k-\tfrac{n+1}{2}\right)\pi^{-n\left(k-\frac{n+1}{2}\right)}, \\
 	c(\tau)&:=\mu(k, n)^{-1}\left|4\tau\right|^{k-\frac{n+1}{2}}\int_Y c_F(\tau, y)e^{-2\pi \tr(\tau y)}
|y|^{k-1-n}dy.
\end{align*}
Then define the holomorphic projection map
\begin{align*}
	\mathbf{Pr}:C_k^{\infty}(\Gamma, \psi)&\to \mathcal{S}_k(\Gamma, \psi); \\
	F&\mapsto \sum_{0<\tau\in S_{\mfrak{b}}^{\triangledown}}c(\tau)e(\tr(\tau z)).
\end{align*}
Furthermore, the projection map satisfies $\langle F, g\rangle=\langle \mathbf{Pr}(F), g\rangle$
for any $g\in \mathcal{S}_k(\Gamma', \psi)$ and $\Gamma'\leq\Gamma$ of finite index. 
\end{theorem}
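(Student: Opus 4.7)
The plan is to imitate Sturm's argument, adapted to the half-integral weight setting by using Poincaré series built from the factor of automorphy $j_\gamma^k(z) = h_\gamma(z) j(\gamma, z)^{[k]}$. For each $0 < \tau \in S_{\mfrak{b}}^{\triangledown}$, I would introduce
\[
	P_\tau(z) = \sum_{\gamma \in \Gamma_P \bslsh \Gamma} \psi_{\mfrak{c}}(|a_\gamma|)^{-1} j_\gamma^k(z)^{-1} e(\tr(\tau \gamma z)),
\]
where $\Gamma_P := \Gamma \cap P$ stabilises the cusp at infinity. The identity $|h_\gamma(z)|^2 = |j(\gamma, z)|$ from (\ref{h1}) reduces absolute convergence to that of $\sum_\gamma |j(\gamma, z)|^{-k}$, which holds for $k > 2n$ by standard Siegel-type bounds, and the cocycle (\ref{h3}) together with assumption $D[\mfrak{b}^{-1}, \mfrak{bc}] \subseteq \mfrak{M}$ gives $P_\tau \in \mathcal{S}_k(\Gamma, \psi)$.

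Next I would unfold $\langle h, P_\tau \rangle$ for $h \in \mathcal{S}_k(\Gamma, \psi)$ over a fundamental domain $X \times Y$ for $\Gamma_P \bslsh \mathbb{H}_n$. The $x$-integral over $X$ selects the $\tau$-th Fourier coefficient $c_h(\tau, 1)$, and the $y$-integral is evaluated via the Siegel gamma formula
\[
	\int_Y e^{-4\pi \tr(\tau y)} |y|^{k - n - 1} \, dy = (4\pi)^{-n(k - (n+1)/2)} \Gamma_n(k - \tfrac{n+1}{2}) |\tau|^{-(k - (n+1)/2)},
\]
yielding a reproducing identity of the form $\langle h, P_\tau \rangle = C_\Gamma \, \mu(k, n) \, |4\tau|^{-(k - (n+1)/2)} \, c_h(\tau, 1)$ for a positive constant $C_\Gamma$ depending only on the level.

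The identical unfolding applied with $F$ in place of $h$, using its Fourier expansion $F(z) = \sum_\sigma c_F(\sigma, y) e(\tr(\sigma x))$, yields
\[
	\langle F, P_\tau \rangle = C_\Gamma \int_Y c_F(\tau, y) e^{-2\pi \tr(\tau y)} |y|^{k - n - 1} \, dy = C_\Gamma \, \mu(k, n) \, |4\tau|^{-(k - (n+1)/2)} \, c(\tau),
\]
with $c(\tau)$ precisely as defined in the statement. The bounded growth hypothesis is exactly what is required to justify the exchange of the inner-product integral with the sum defining $P_\tau$ via Fubini, and to guarantee absolute convergence of the inner $y$-integral. Defining $\mathbf{Pr}(F)$ by its Fourier series, comparison of the two identities above forces $\langle F - \mathbf{Pr}(F), P_\tau \rangle = 0$; since Poincaré series at level $\Gamma'$ span $\mathcal{S}_k(\Gamma', \psi)$, this extends by density to $\langle F, g \rangle = \langle \mathbf{Pr}(F), g \rangle$ for all $g \in \mathcal{S}_k(\Gamma', \psi)$ with $\Gamma' \leq \Gamma$ of finite index.

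The main obstacle I anticipate is confirming that $\mathbf{Pr}(F)$ is not merely a formal series but an honest element of $\mathcal{S}_k(\Gamma, \psi)$. This requires a polynomial bound $c(\tau) = O(|\tau|^N)$ to guarantee uniform convergence on compacta, obtained from a crude majorant for the inner integral using bounded growth, together with verification of the correct half-integral transformation law --- most cleanly achieved by characterising $\mathbf{Pr}(F)$ \emph{abstractly} as the unique cusp form reproducing the pairings $\langle F, \cdot \rangle$ against $\mathcal{S}_k(\Gamma, \psi)$, which forces membership in the correct space automatically. A secondary subtlety is bookkeeping with the lift $r: G \to M_{\mathbb{A}}$ when passing between the projection and the automorphy factor in the convergence estimates, but this is pinned down by the hypotheses on $\mfrak{b}, \mfrak{c}$ ensuring $\Gamma \subseteq \mfrak{M}$.
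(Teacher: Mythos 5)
Your strategy coincides with the paper's (both are adaptations of Sturm): build half-integral weight Poincar\'e series, unfold against them to obtain one formula producing $\int_Y c_F(\tau,y)e^{-2\pi\tr(\tau y)}|y|^{k-1-n}dy$ for general $F$ of bounded growth and one reproducing $c_h(\tau,1)$ for holomorphic cusp forms $h$, and then identify $\mathbf{Pr}(F)$ as the cusp form representing the functional $g\mapsto\langle F,g\rangle$. Your closing step via the Riesz representative on the finite-dimensional space $\mathcal{S}_k(\Gamma,\psi)$ is a clean repackaging of what the paper does explicitly with the kernel $K(z,w)=\mathrm{const}\cdot\sum_{\tau}|4\tau|^{k-\frac{n+1}{2}}G_{\tau}(z)e(-\tr(\tau\bar w))$; that difference is cosmetic.

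Two steps, however, are wrong as written. First, you sum over $\Gamma_P\bslsh\Gamma$ with $\Gamma_P=\Gamma\cap P$, but the summand $\psi_{\mfrak{c}}(|a_\gamma|)^{-1}j_\gamma^k(z)^{-1}e(\tr(\tau\gamma z))$ is not left-invariant under $\Gamma\cap P$: for $p\in\Gamma\cap P$ one has $p\cdot w=a_pwa_p^T+(\text{translation})$, so replacing $\gamma$ by $p\gamma$ replaces $\tau$ by $a_p^T\tau a_p$ in the exponential and rescales the automorphy factor, and for $a_p\notin\{\pm I_n\}$ the terms genuinely differ; the series is therefore ill-defined on those cosets. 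The correct group is the paper's $\Gamma_{\infty}$, generated by $\begin{psmallmatrix}\pm I_n & b\\ 0 & \pm I_n\end{psmallmatrix}$ with $b\in M_n(\mfrak{b}^{-1})$, and consistently $X\times Y$ is a fundamental domain for $\Gamma_{\infty}\bslsh\mathbb{H}_n$, not for $(\Gamma\cap P)\bslsh\mathbb{H}_n$ (the latter would also quotient $Y$ by $y\mapsto aya^T$). Second, convergence does not reduce to that of $\sum_{\gamma}|j(\gamma,z)|^{-k}$: over $\Gamma_{\infty}\bslsh\Gamma$ that series diverges for $n\geq 2$, because each coset of $\Gamma\cap P$ splits into infinitely many $\Gamma_{\infty}$-cosets, represented by $\diag[a,\tilde a]\gamma$ with $a\in GL_n(\mathbb{Z})$, on all of which $|j(\cdot,z)|$ takes the same value. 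What (\ref{h1}) actually yields is the two-sided bound $|j(\gamma,z)^{k'}|<|j_{\gamma}^k(z)|<|j(\gamma,z)^{k''}|$ with $k''=k'\pm 1$, which reduces convergence of the half-integral series \emph{with the exponential factor retained} to Godement's theorem on integral-weight Poincar\'e series of exponential type; that is precisely how the paper argues. With these two repairs (which the rest of your computation already implicitly assumes) your proof goes through and is essentially the paper's.
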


To prove this we introduce the half-integral weight Poincar\'{e} series. Let $\Gamma$ and $k>2n$ 
be as above, fix $0<\tau\in S_{\mfrak{b}}^{\triangledown}$, and let $\Gamma_{\infty}$ be the subgroup of $\Gamma$ generated by $\begin{psmallmatrix} \pm I_n & b \\ 0 & \pm I_n\end{psmallmatrix}$, with $b\in M_n(\mfrak{b}^{-1})$. Then define the single-variable holomorphic Poincar\'{e} series by
\[ 
	G_{\tau}(z):=\sum_{\alpha\in \Gamma_{\infty}\bslsh\Gamma}\psi_{\mfrak{c}}^{-1}(|a_{\alpha}|)
j_{\alpha}^k(z)^{-1}e(\tr(\tau \alpha z)). 
\]
\begin{proposition}\label{poincare} 
\begin{enumerate}[(i)]
	\item The sum defining $G_{\tau}$ converges absolutely and uniformly on compact 
subgroups of $\mathbb{H}_n$ and $G_{\tau}\in \mathcal{S}_k(\Gamma, \psi)$.
	\item If $F\in C_k^{\infty}(\Gamma, \psi)$, then
\[ 
	N(\mfrak{b})^{\frac{n(n+1)}{2}}\Vol(\Gamma\bslsh\mathbb{H}_n)\langle F, G_{\tau}\rangle=\int_Y c_F(\tau, y)
e^{-2\pi \tr(\tau y)}|y|^{k-1-n}dy, 
\]
which integral is absolutely convergent.
	\item If $f\in \mathcal{S}_k(\Gamma, \psi)$ then
\[ 
	N(\mfrak{b})^{\frac{n(n+1)}{2}}\Vol(\Gamma\bslsh\mathbb{H}_n)\langle f, G_{\tau}\rangle=c_f(\tau, 1)|4\tau|^{\frac{n+1}{2}-k}\mu(k, n). 
\]
\end{enumerate}
\end{proposition}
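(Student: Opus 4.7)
The plan is to treat the three parts in sequence. For (i), I would establish absolute and locally uniform convergence by dominating each summand. By property (\ref{h1}) one has $|h_\sigma(z)|^2 = |j(\pr(\sigma), z)|$, so $|j_\alpha^k(z)| = |j(\alpha, z)|^k$. Since $\tau > 0$ and $\Im(\alpha z) > 0$ we have $|e(\tr(\tau\alpha z))| = e^{-2\pi\tr(\tau\Im(\alpha z))} \leq 1$, so each summand is bounded in modulus by $|j(\alpha, z)|^{-k}$; the classical majorant series $\sum_{\Gamma_\infty\bslsh\Gamma}|j(\alpha,z)|^{-k}$ converges locally uniformly once $k > 2n$, giving holomorphy of $G_\tau$. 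For the modular transformation property, given $\gamma_0 \in \Gamma$ I would reindex the sum by $\alpha \mapsto \alpha\gamma_0^{-1}$ and invoke the cocycle $j_{\alpha\gamma_0}^k(z) = j_\alpha^k(\gamma_0 z)j_{\gamma_0}^k(z)$, which follows from (\ref{h3}) applied with $\rho = 1$, $\sigma = \alpha$, $\tau = \gamma_0$ -- legitimate because $D[\mfrak{b}^{-1},\mfrak{bc}]\subseteq\mfrak{M}$ places $\Gamma$ inside $C^\theta$. The nebentypus factor then comes from the congruence $a_{\alpha\gamma_0} \equiv a_\alpha a_{\gamma_0} \pmod{\mfrak{c}}$ (since $b_\alpha c_{\gamma_0} \in M_n(\mfrak{b}^{-1})M_n(\mfrak{bc}) \subseteq M_n(\mfrak{c})$) together with property (\ref{char1}). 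The cusp-form property is then standard: $G_\tau||_k\gamma$ is again a Poincar\'e-type sum with only positive $\tau$ in each surviving exponential, so the constant term of its Fourier expansion vanishes at every cusp.

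For (ii), I would apply the Rankin--Selberg unfolding. Bounded growth of $F$ together with the majorant above justifies interchanging sum and integral. The substitution $z \mapsto \alpha^{-1}z$ then unfolds the sum over $\alpha \in \Gamma_\infty\bslsh\Gamma$ into a single integral over $\Gamma_\infty\bslsh\mathbb{H}_n$. The factors $F(\alpha^{-1}z) = \psi_{\mfrak{c}}(|a_\alpha|)^{-1}j_\alpha^k(\alpha^{-1}z)^{-1}F(z)$ coming from $F$'s modularity, the conjugated $\overline{\psi_{\mfrak{c}}^{-1}(|a_\alpha|)j_\alpha^k(\alpha^{-1}z)^{-1}}$ coming from $G_\tau$, and the Jacobian $\Delta(\alpha^{-1}z)^k = |j(\alpha,\alpha^{-1}z)|^{2k}\Delta(z)^k$ all combine -- using $|j_\alpha^k|^2 = |j(\alpha,z)|^{2k}$ and $|\psi_{\mfrak{c}}| = 1$ -- so that every $\alpha$-dependent factor cancels. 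One is left with $\int_{\Gamma_\infty\bslsh\mathbb{H}_n} F(w)\overline{e(\tr(\tau w))}\Delta(w)^k d^\times w$. Inserting the Fourier expansion of $F$ from Theorem \ref{fourierexpth} and integrating out $x$ over a fundamental domain of $S(\mathbb{R})/S(\mfrak{b}^{-1})$, orthogonality of characters selects $\xi = \tau$ and introduces the volume factor $N(\mfrak{b})^{-n(n+1)/2}$. Dividing by $\Vol(\Gamma\bslsh\mathbb{H}_n)$ and rearranging yields the claim; absolute convergence of the resulting $y$-integral is guaranteed by the bounded-growth hypothesis.

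For (iii), the Fourier coefficients of a holomorphic cusp form $f$ satisfy $c_f(\tau, y) = c_f(\tau, 1)e^{-2\pi\tr(\tau y)}$. Substituting into (ii) reduces the $y$-integral to $c_f(\tau,1)\int_Y e^{-4\pi\tr(\tau y)}|y|^{k-1-n}dy$. Siegel's matrix gamma formula $\int_Y e^{-\tr(uy)}|y|^{s-(n+1)/2}dy = \Gamma_n(s)|u|^{-s}$ applied with $u = 4\pi\tau$ and $s = k - (n+1)/2$ evaluates this to $\Gamma_n(k-(n+1)/2)|4\pi\tau|^{(n+1)/2 - k}$; collecting the $\pi$-factors into $\mu(k,n)$ gives $\mu(k,n)|4\tau|^{(n+1)/2 - k}$, as required.

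I expect the main obstacle to be the careful handling of the half-integral cocycle in (i). Property (\ref{h3}) applies only when one factor is parabolic and another lies in $C^\theta$, so verifying the modular transformation law of $G_\tau$ requires explicitly decomposing products such as $\alpha\gamma_0 = 1\cdot\alpha\cdot\gamma_0$ and appealing to the inclusion $\Gamma \subseteq C^\theta$ guaranteed by $(\mfrak{b}^{-1},\mfrak{bc})\subseteq 2\mathbb{Z}\times 2\mathbb{Z}$. Once the cocycle and the multiplicativity of $\psi_\mfrak{c}$ on $\Gamma$ are established, the computations in (ii) and (iii) reduce to the classical Rankin--Selberg unfolding and Siegel's gamma integral.
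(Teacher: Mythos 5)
Your proposal is correct and follows essentially the same route as the paper: the cocycle relation from (\ref{h3}) plus reindexing $\alpha\mapsto\alpha\gamma_0$ for the transformation law, domination of $|j_\alpha^k(z)|$ by integral-weight terms to inherit convergence from the classical (Godement) case, and Rankin--Selberg unfolding followed by the Siegel gamma integral for (ii) and (iii). The only cosmetic difference is that the paper brackets $|j_\alpha^k(z)|$ between two integer powers of $|j(\alpha,z)|$ rather than using the exact relation $|j_\alpha^k(z)|=|j(\alpha,z)|^k$, and it cites Sturm for (ii)--(iii) instead of writing out the unfolding, which you reproduce faithfully.
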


\begin{proof} 
\textbf{(i)} To show $G_{\tau}\in\mathcal{S}_k(\Gamma, \psi)$ we show $G_{\tau}||_k\gamma = \psi_{\mfrak{c}}(|a_{\gamma}|) G_{\tau}(z)$ for all 
$\gamma\in\Gamma$. We have $h( \alpha\gamma, z) = h(\alpha, \gamma z)h(\gamma, z)$ by the automorphic property,(\ref{h3}), of the function $h$ and, combined with the usual cocyle relation on $j(\alpha\gamma, z)$, we obtain
\[
	j_{\alpha}^k(\gamma z) = j_{\gamma}^k(z)^{-1}j_{\alpha\gamma}^k(z).
\]
Write $\gamma = pw$ with $p\in \Gamma_{\infty}$ and 
$w\in \Gamma_{\infty}\bslsh \Gamma$. Then $\alpha\gamma = \alpha w$ in $\Gamma_{\infty}
\bslsh\Gamma$, and the map $\alpha\mapsto \alpha w$ is both a bijection and well defined on 
$\Gamma_{\infty}\bslsh\Gamma$. With all of this, and noting $a_{\alpha\gamma}\equiv a_{\alpha}a_{\gamma}\pmod{\mfrak{c}}$, we get 
\begin{align*}
	G_{\tau}(\gamma z) &= j_{\gamma}^k(z)\psi_{\mfrak{c}}(|a_{\gamma}|)
\sum_{\alpha w\in \Gamma_{\infty}\bslsh\Gamma} \psi_{\mfrak{c}}^{-1}(|a_{\alpha q}|) 
j_{\alpha w}^k(z)^{-1} e(\tr(\tau(\alpha w)z)) \\
	&= j_{\gamma}^k(z)\psi_{\mfrak{c}}(|a_{\gamma}|) G_{\tau}(z).
\end{align*}
For convergence, the integral case was proved in \cite{Godement} and, by the property in (\ref{h1}) 
of the function $h$, we have
\[ 
	|j(\gamma, z)^{k'}|<|j_M^k(z)|<|j(\gamma, z)^{k''}| 
\]
where $k'=[k]$ and $k''=k'+1$ if $|j(\gamma, z)|>1$ and 
$k'=[k]+1, k''=k'-1$ if $|j(\gamma, z)|<1$. So absolute convergence, 
uniformly on compact subgroups of $\mathbb{H}_n$, follows from the integral case.

\textbf{(ii)} and \textbf{(iii)} can be proven in precisely the same manner as the integral 
case in \cite[p. 332]{Sturm} using this adapted Poincar\'{e} series in place of the one appearing 
there. Note that for (iii) we require boundedness of $|y|^{\frac{k}{2}}|f(z)|$, something which 
is clarified for the half-integral weight case later on in the proof of Corollary \ref{boundsonemod} (i).
\end{proof}

With Proposition \ref{poincare} above the proof of Theorem \ref{holoproj} follows by setting
\[
	K(z, w) := N(\mfrak{b})^{\frac{n(n+1)}{2}}\Vol(\Gamma\bslsh\mathbb{H}_n)\mu(k, n)^{-1}\sum_{0<\tau\in S_{\mfrak{b}}^{\triangledown}}|4\tau|^{k-\frac{n+1}{2}}G_{\tau}(z)e(-\tr(\tau\bar{w}))
\]
and proceeding as in \cite[pp. 332--333]{Sturm}.

In the rest of the section we extend some bounds found in \cite[pp. 335--336]{Sturm} to our 
setting; these bounds shall govern when holomorphic projection is applicable in certain cases.
Let $\kappa\in\frac{1}{2}\mathbb{Z}$ and $\Gamma_0$ be a congruence subgroup that is contained in $\mfrak{M}$ if $\kappa\notin\mathbb{Z}$. Define, for a variable $z\in\mathbb{H}_n$ and $b\in\mathbb{R}$ such that $b>\frac{n+1}{2}$, the following majorant of the non-holomorphic Eisenstein series
\[ 
	H_{\kappa}(z, b; \Gamma_0)=H_{\kappa}(z, b):=|y|^{b - \frac{\kappa}{2}}\sum_{\alpha\in P\cap\Gamma_0\bslsh \Gamma_0}\|c_{\alpha}z+d_{\alpha}\|^{-2b}.
\]

Let $\Omega$ be a fundamental domain for $Sp_n(\mathbb{Z})\bslsh\mathbb{H}_n$ chosen 
so that $z=x+iy\in\Omega$ implies that $y>\eps I_n$ for some $\eps>0$ independent of $z$.

\begin{proposition} \label{alt} 
Let $C_0, a\in\mathbb{R}$ be given with $C_0>0$ and $a\geq 0$. Let 
$\ph:\mathbb{H}_n\to\mathbb{C}$ be such that
\[ 
	|\ph^2(\gamma\cdot z)|\leq C_0|y|^a 
\]
for all $z\in\Omega$ and $\gamma\in Sp_n(\mathbb{Z})$. Then, writing $\lambda_j$ as the eigenvalues of $y$ and taking only positive square roots, we have
\[ 
	|\ph(z)|\leq C_1\prod_{j=1}^n(\lambda_j^{\frac{a}{2}}+\lambda_j^{-\frac{a}{2}}),
\]
for some constant $C_1>0$ dependent only on $\ph$.
\end{proposition}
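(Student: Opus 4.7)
The plan is to exploit the fact that $\Omega$ is a fundamental domain for $Sp_n(\mathbb{Z})\bslsh\mathbb{H}_n$ in order to convert the hypothesis, which is stated only for $z\in\Omega$ but over arbitrary $\gamma$, into a pointwise bound on $\varphi$ in terms of the eigenvalues of $y = \Im(z)$. Given any $z\in\mathbb{H}_n$, I would first pick $\gamma\in Sp_n(\mathbb{Z})$ and $z_0\in\Omega$ with $z=\gamma\cdot z_0$, so that the hypothesis immediately gives
\[
|\varphi(z)|^{2} = |\varphi(\gamma\cdot z_0)|^{2} \le C_0\,|\Im(z_0)|^{a},
\]
reducing the problem to a bound on $|\Im(z_0)|^{a}$ in terms of the $\lambda_j$.

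The standard transformation formula for the imaginary part under the symplectic action yields $|\Im(z_0)| = |y|\cdot|\det(c_{\gamma^{-1}}z + d_{\gamma^{-1}})|^{-2}$, so what I really need is a lower bound on $|\det(c_{\gamma^{-1}}z + d_{\gamma^{-1}})|^{2}$ when $z_0 = \gamma^{-1}\cdot z$ lies in $\Omega$. Siegel reduction theory, applied to the symplectic-coprime pair $(c_{\gamma^{-1}}, d_{\gamma^{-1}})$ (which satisfies $c_{\gamma^{-1}} d_{\gamma^{-1}}^{T} = d_{\gamma^{-1}} c_{\gamma^{-1}}^{T}$), furnishes a uniform bound of the form
\[
|\det(c_{\gamma^{-1}}z + d_{\gamma^{-1}})|^{2} \ge \kappa \prod_{j\in J}\lambda_j^{2}
\]
for some $J \subseteq \{1,\ldots,n\}$ depending on the rank-type of $c_{\gamma^{-1}}$ and some universal $\kappa > 0$; the cusp-stabiliser case $c_{\gamma^{-1}} = 0$ corresponds to $J=\emptyset$ with $|\det|^{2}=1$. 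Combining this with $|\Im(z_0)| = |y|\cdot|\det|^{-2}$ and the observation $\prod_{j\notin J}\lambda_j\prod_{j\in J}\lambda_j^{-1}\le\prod_j\max(\lambda_j,\lambda_j^{-1})$ gives $|\Im(z_0)| \le \kappa^{-1}\prod_{j}\max(\lambda_j,\lambda_j^{-1})$. Applying the elementary inequality $\max(\lambda_j,\lambda_j^{-1})^{a} \le \lambda_j^{a} + \lambda_j^{-a} \le (\lambda_j^{a/2} + \lambda_j^{-a/2})^{2}$ factor by factor and taking the positive square root then delivers the claim with $C_1 = \sqrt{C_0}\,\kappa^{-a/2}$.

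The principal obstacle is the reduction-theoretic step, namely the uniform lower bound on $|\det(c_{\gamma^{-1}}z + d_{\gamma^{-1}})|^{2}$ across all $\gamma$ that bring $z$ into $\Omega$. For $n=1$ it is elementary: whenever $c\ne 0$ one has $|cz + d|^{2} \ge c^{2}\lambda_1^{2} \ge \lambda_1^{2}$. For higher $n$ it requires a careful analysis combining the Minkowski reduction of $y$ within $\Omega$ with the elementary divisors of $c_{\gamma^{-1}}$; once that bound is in hand, the remaining arithmetic manipulations are purely formal.
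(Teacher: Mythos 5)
Your proposal follows the paper's own proof essentially step for step: reduce $z$ into $\Omega$ by some $\gamma$, apply the hypothesis at the reduced point, rewrite $|\Im(z_0)|$ as $|y|\,\|c_{\gamma^{-1}}z+d_{\gamma^{-1}}\|^{-2}$, lower-bound that determinant by a sub-product of the $\lambda_j$, and finish with the same elementary manipulations (the paper's closing inequality $\prod_{\nu}\lambda_{j_\nu}^{-a}\le\prod_{j}(1+\lambda_j^{-a})$ plays exactly the role of your $\max(\lambda_j,\lambda_j^{-1})$ device). The arithmetic in your steps is correct, and the determinant bound you isolate as the ``principal obstacle'' is indeed true with $\kappa$ depending only on $n$; it is precisely the ingredient the paper quotes from Sturm (p.~334), so your outline is viable.

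However, as written the proposal does not prove that bound, and your indication of how it would be proved is misdirected: no Minkowski reduction of $y$, nor any reducedness of $z_0$, enters at all --- the bound holds for \emph{every} $z\in\mathbb{H}_n$ and every coprime symmetric pair $(c,d)$. The actual argument (Sturm's, reproduced in the paper) runs: write $c=U_1\diag[c_1,0]U_2^T$ and $d=U_1\diag[d_1,I_{n-r}]U_2^{-1}$ with $U_1,U_2\in GL_n(\mathbb{Z})$, $c_1\in M_r(\mathbb{Z})$ invertible, $c_1d_1^T$ symmetric, and $r=\rank(c)$; then $\|cz+d\|=\|c_1\|\,\|Q^TzQ+c_1^{-1}d_1\|\ge\|Q^TzQ+c_1^{-1}d_1\|$, where $Q$ consists of the first $r$ columns of $U_2$. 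Since $c_1^{-1}d_1$ is real symmetric and $\Im(Q^TzQ)=Q^TyQ>0$, the inequality $|\det Z|\ge\det(\Im Z)$ for complex symmetric $Z$ with positive definite imaginary part gives $\|cz+d\|\ge\det(Q^TyQ)$, and Cauchy--Binet together with $\det(Q^TQ)\ge 1$ (an integral matrix of full column rank) yields a subset $J$ with $|J|=r$ and $\det(Q^TyQ)\ge\binom{n}{r}^{-1}\prod_{j\in J}\lambda_j$. Note also that $J$ depends on $z$ through the eigenvectors of $y$, not merely on the rank-type of $c_{\gamma^{-1}}$; this is harmless, since your argument only needs the existence of some $J$, but it shows the bound is not a statement about $c_{\gamma^{-1}}$ alone.
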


\begin{proof} 
Let $z\in\mathbb{H}_n$ and choose $\gamma
\in Sp_n(\mathbb{Z})$ such that $\gamma z\in\Omega$. Then 
\begin{align} 
	|\ph^2(z)| = |\ph^2(\gamma^{-1}(\gamma z))|\leq C_0|\Im(\gamma z)|^a 
= C_0|y|^a\|c_{\gamma}z+d_{\gamma}\|^{-2a}.  \label{phisq}
\end{align}
Let $r$ be the rank of $c_{\gamma}$; as in \cite[p. 334]{Sturm} there exist 
$U_1, U_2\in GL_n(\mathbb{Z})$ such that
\[ 
	c=U_1\begin{pmatrix} c_1 & 0 \\ 0 & 0\end{pmatrix} U_2^T, \hspace{10pt} 
d=U_1\begin{pmatrix} d_1 & 0 \\ 0 & I_{n-r}\end{pmatrix} U_2^{-1},
\]
where $c_1, d_1\in M_r(\mathbb{Z})$ are such that $|c_1|\neq 0$ and $c_1d_1^T$ is symmetric. 
Put $U_2=\begin{pmatrix} Q &  Q'\end{pmatrix}$, where $Q\in M_{n\times r}(\mathbb{Z}), Q'\in M_{n\times (n-r)}(\mathbb{Z})$. 
Then we have $\|c_{\gamma}z+d_{\gamma}\| = \| Q^TzQ + c_1^{-1}d_1\|\geq \|QyQ^T\|$ so that, from (\ref{phisq}) above,
\[ 
	|\ph^2(z)|\leq C_0|y|^a|y_0|^{-2a},
\]
where $y_0=Qy Q^T$.
Sturm shows, in \cite[p. 334]{Sturm}, that there exist $1\leq j_1<j_2<\cdots <j_r\leq n$ such that  $|y_0|\geq \alpha\prod_{\nu=1}^r \lambda_{j_{\nu}}$.
Now we have $\lambda_j>0$ for all $j$ and so 
$\prod_{\nu=1}^r\lambda_{j_{\nu}}^{-a}\leq \prod_{j=1}^n (1+\lambda_j^{-a})$
-- the left-hand side is just one term in the expansion on the right-hand side, 
all terms of which are $\geq 0$. So, with $C_1=\sqrt{C_0}\alpha^{-a}$, we indeed get
\[ 
	|\ph(z)|\leq C_1\prod_{j=1}^n\lambda_j^{\frac{a}{2}}(1+\lambda_j^{-a})=C_1\prod_{j=1}^n(\lambda_j^{\frac{a}{2}}+\lambda_j^{-\frac{a}{2}}).
\]
\end{proof}

\begin{corollary}\label{boundsonemod}
Let $f\in\mathcal{S}_k(\Gamma, \psi)$; $g\in\mathcal{M}_{\ell}(\Gamma, \psi)$; $\ell, \kappa\in\frac{1}{2}\mathbb{Z}$; and $b>\frac{n+1}{2}$. Then there exists a constant $0<C\in\mathbb{R}$ such that
\begin{enumerate}[(i)]
	\item \[ |f(z)|\leq C|y|^{-\frac{k}{2}}; \]
	\item \[ |g(z)|\leq C\prod_{j=1}^n (1-\lambda_j^{-\ell}); \]
	\item \[ |H_{\kappa}(z, b)|\leq C\prod_{j=1}^n(\lambda_j^{b-\frac{\kappa}{2}}+\lambda_j^{-b-\frac{\kappa}{2}}). \]
\end{enumerate}
\end{corollary}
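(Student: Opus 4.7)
The plan is to derive each bound by applying Proposition \ref{alt} to the product of the modular object with a half-power of $\Delta$: $\varphi_f := f\Delta^{k/2}$ for (i), $\varphi_g := g\Delta^{\ell/2}$ for (ii), and $\varphi_H := H_\kappa\Delta^{\kappa/2}$ for (iii). The observation that unifies the integral and half-integral cases is that $|j_\sigma^k(z)|^2 = |\det(c_\sigma z + d_\sigma)|^{2k}$: by (\ref{h1}), $|h(\sigma,z)|^2 = |j(\pr\sigma,z)|$ supplies precisely the missing $|j|$ to promote $|j|^{2[k]}$ to $|j|^{2k}$. It follows that $|f|^2\Delta^k$ and $|g|^2\Delta^\ell$ are $\Gamma$-invariant; similarly $H_\kappa^2\Delta^\kappa$ is $\Gamma_0$-invariant, using in addition the identity $H_\kappa(\gamma z, b) = |j(\gamma, z)|^\kappa H_\kappa(z, b)$ for $\gamma\in\Gamma_0$, which is obtained by rearranging the defining series through the cocycle $j(\alpha\gamma, z) = j(\alpha, \gamma z)j(\gamma, z)$.

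For (i) and (ii), I would fix a finite coset decomposition $Sp_n(\mathbb{Z}) = \bigsqcup_{i=1}^N\Gamma\beta_i$. Given $\gamma = \gamma_0\beta_i$ and $z\in\Omega$, invariance gives $|\varphi(\gamma z)| = |\varphi(\beta_i z)|$, and a routine slash-operator computation produces $|\varphi_f^2(\beta_i z)| = |f_i(z)|^2\Delta(z)^k$ with $f_i := f||_k\beta_i$, a cusp form for the conjugate group $\beta_i^{-1}\Gamma\beta_i$; the analogous identity holds for $\varphi_g$ with $g_i$ a modular form. The Fourier expansion, combined with $y > \eps I_n$ on $\Omega$, gives $|f_i|^2\Delta^k \leq C$ (by exponential decay of cusp forms at all cusps) and $|g_i| \leq C$ (by boundedness at the cusp via the constant term). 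Taking the max over the finitely many $i$ verifies the hypotheses of Proposition \ref{alt} with $a=0$ for $\varphi_f$ and $a = \ell$ for $\varphi_g$; the conclusions yield (i) directly, and yield (ii) after noting $\prod_j\lambda_j^{-\ell/2}(\lambda_j^{\ell/2} + \lambda_j^{-\ell/2}) = \prod_j(1 + \lambda_j^{-\ell})$.

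For (iii) I would proceed analogously with a coset decomposition $Sp_n(\mathbb{Z}) = \bigsqcup_i\Gamma_0\beta_i$; after using the cocycle relation on the defining series and cancelling the $|j(\beta_i, z)|$ factors, one finds
\[
|\varphi_H^2(\gamma z)| = |y|^{2b}\,E_i(z, b)^2, \qquad E_i(z, b) := \sum_{\alpha\in (P\cap\Gamma_0)\bslsh\Gamma_0\beta_i}|j(\alpha, z)|^{-2b}.
\]
The main obstacle is a uniform bound $E_i(z, b) \leq C$ on $\Omega$: each $E_i$ is an Eisenstein-type series over a coset system inside $Sp_n(\mathbb{Z})$ which converges absolutely for $b > (n+1)/2$ by Godement's theorem (cited in the proof of Proposition \ref{poincare}), and since $\Omega$ has $\Im(z)$ bounded below the rank-of-$c_\alpha$ estimates used in the proof of Proposition \ref{alt} bound the non-trivial summands by a convergent series of constants. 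Applying Proposition \ref{alt} with $a = 2b$ then gives $|\varphi_H(z)| \leq C\prod_j(\lambda_j^b + \lambda_j^{-b})$, and dividing by $\Delta^{\kappa/2} = \prod_j\lambda_j^{\kappa/2}$ recovers the bound in (iii).
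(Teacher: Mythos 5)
Your proof is correct and follows essentially the same route as the paper: the paper likewise applies Proposition \ref{alt} to $|y|^{k/2}f$, $|y|^{\ell/2}g$, and $|y|^{\kappa/2}H_{\kappa}$ with $a=0$, $a=\ell$, and $a=2b$ respectively, handling the half-integral weight by squaring (your observation that $|h(\sigma,z)|^2=|j(\pr(\sigma),z)|$ promotes $|j|^{2[k]}$ to $|j|^{2k}$ is exactly the paper's reduction to the integral-weight objects $f^2$, $g^2$, $H^2$, whose requisite bounds it then cites from Sturm's integral-weight computations rather than re-deriving via coset decompositions as you do). Note also that your derivation produces $\prod_{j}(1+\lambda_j^{-\ell})$ in (ii), which confirms that the minus sign in the stated bound is a typographical error, since a bound by $\prod_j(1-\lambda_j^{-\ell})$ would be vacuous whenever some $\lambda_j=1$.
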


\begin{proof}
\textbf{(i)} Consider $f^2$ -- a cusp form of integral weight $2k$ and level $\Gamma$. 
Apply the above Proposition \ref{alt} to the 
function $\ph(z):=|y|^{\frac{k}{2}}f(z)$ with $a=0$.

\indent \textbf{(ii)} Consider $g^2$ -- a modular form of integral weight $2\ell$ and level $\Gamma$. Then $\ph(z) := |y|^{\frac{\ell}{2}}g(z)$ satisfies the conditions of Proposition \ref{alt} with $a=\frac{2\ell}{2}=\ell$.

\indent \textbf{(iii)} $H^2$ is a constant multiple of $H_{2\kappa}(z, 2b)$ which is of integral weight $2\kappa$. Sturm shows,  in \cite[p. 335]{Sturm}, that $|\Im(\gamma z)|^{\kappa}|H_{2\kappa}(\gamma z, 2b)|\leq C_0|y|^{2b}$. Hence $\ph(z) := |y|^{\frac{\kappa}{2}}H_{\kappa}(z, b)$ satisfies the conditions of Proposition \ref{alt} with $a = 2b$.
\end{proof}

\begin{corollary} \label{boundstwo} 
Let $k$ be a half-integral weight, $\ell\in\frac{1}{2}\mathbb{Z}$, $g\in\mathcal{M}_{\ell}(\Gamma, \psi)$, $b>\frac{n+1}{2}$, and put $F^*(z):=g(z)H_{k-\ell}(z, b)$.
Then $F^*$ is of bounded growth provided we have
\[ 
	\frac{n+1}{2}<b<
\begin{cases} 
	\frac{k}{2}-n&\text{if $g\in\mathcal{S}_{\ell}(\Gamma, \psi)$}, \\ 
	\frac{k-\ell}{2}-n &\text{otherwise}. 
\end{cases} 
\]
\end{corollary}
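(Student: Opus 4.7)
The plan is to bound the defining integral of bounded growth by passing to the spectral decomposition of $y$, which reduces the analysis to a finite sum of one-variable gamma-type integrals whose convergence thresholds yield the stated range of $b$. Since the pointwise upper bounds on $|F^*(z)|$ supplied by Corollary \ref{boundsonemod} are $O(n)$-invariant and depend on $z$ only through the eigenvalues of $y = \Im z$, and since $X$ is bounded, the $x$-integration contributes only a finite constant. It then suffices to bound
\[
I := \int_Y |F^*(z)|\, |y|^{k-1-n} e^{-\epsilon \tr(y)}\, dy
\]
for every $\epsilon > 0$. Writing $\lambda_1, \ldots, \lambda_n$ for the eigenvalues of $y$, Corollary \ref{boundsonemod}(ii), (iii) give in the non-cuspidal case
\[
|F^*(z)| \leq C \prod_{j=1}^n (1+\lambda_j^{-\ell})\bigl(\lambda_j^{b-(k-\ell)/2}+\lambda_j^{-b-(k-\ell)/2}\bigr),
\]
whereas the cuspidal case uses the sharper $|g(z)| \leq C|y|^{-\ell/2} = C\prod_j \lambda_j^{-\ell/2}$ from part (i) to collapse this to
\[
|F^*(z)| \leq C \prod_{j=1}^n \bigl(\lambda_j^{b-k/2}+\lambda_j^{-b-k/2}\bigr).
\]
Combined with $|y|^{k-1-n} = \prod_j \lambda_j^{k-1-n}$ and $\tr(y) = \sum_j \lambda_j$, and expanding the products, $I$ is majorised by a finite sum of integrals $\int_Y \prod_j \lambda_j^{c_j} e^{-\epsilon \lambda_j}\, dy$ for a finite collection of exponent vectors $(c_j)$.

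Next, parametrise $y = q^T \Lambda q$ with $q \in O(n)$ and $\Lambda = \diag(\lambda_1, \ldots, \lambda_n)$, $\lambda_1 > \cdots > \lambda_n > 0$; the measure transforms as $dy \propto \prod_{i<j}(\lambda_i - \lambda_j)\, d\lambda_1 \cdots d\lambda_n \, dq$. Since the integrand is $O(n)$-invariant, each summand reduces to a constant multiple of
\[
\int_{\lambda_1 > \cdots > \lambda_n > 0} \prod_j \lambda_j^{c_j} e^{-\epsilon \lambda_j} \prod_{i<j}(\lambda_i-\lambda_j)\, d\lambda_1 \cdots d\lambda_n.
\]
Bounding $0 \leq \prod_{i<j}(\lambda_i - \lambda_j) \leq \prod_{i<j}(\lambda_i + \lambda_j)$, the Vandermonde Jacobian is dominated by a polynomial of total degree $\binom{n}{2}$ whose monomials carry only \emph{nonnegative} powers of each $\lambda_j$. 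Thus the integral breaks into a finite sum of products of one-dimensional integrals $\int_0^\infty \lambda^{c_j+p_j} e^{-\epsilon \lambda}\, d\lambda$ with $p_j \geq 0$ coming from the Vandermonde expansion.

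Each one-dimensional factor converges at infinity by the exponential and converges at the origin iff $c_j + p_j > -1$, equivalently iff $c_j > -1$, since $p_j \geq 0$. Selecting the most singular choice in the bounds on $|F^*|$ realises the smallest $c_j$: in the non-cuspidal case this arises from the term $\lambda_j^{-b-(k+\ell)/2}$ and yields $c_j = (k-\ell)/2 - 1 - n - b$; in the cuspidal case it yields $c_j = k/2 - 1 - n - b$. The inequalities $c_j > -1$ are precisely the stated upper bounds $b < (k-\ell)/2 - n$ and $b < k/2 - n$, respectively, while the lower bound $b > (n+1)/2$ is just the domain of absolute convergence of $H_{k-\ell}(z, b)$. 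The main obstacle is the bookkeeping with the Vandermonde Jacobian; once one verifies that it contributes only nonnegative powers of every $\lambda_j$, the threshold for $b$ is dictated purely by the most singular small-$\lambda$ behaviour of $|F^*| \cdot |y|^{k-1-n}$.
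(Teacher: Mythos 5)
Your proposal is correct and is essentially the argument the paper invokes: the paper's proof is a one-line deferral to Sturm (pp.~335--336), whose method is exactly what you carry out --- combine the eigenvalue-only majorants of Corollary \ref{boundsonemod} (using the sharper cuspidal bound (i) when $g\in\mathcal{S}_{\ell}$), diagonalise $y=U\Lambda U^T$, dominate the Vandermonde Jacobian by monomials with nonnegative exponents, and read off the threshold on $b$ from the most singular one-dimensional gamma-type integral at the origin. Your exponent bookkeeping reproduces the stated bounds $b<\frac{k}{2}-n$ and $b<\frac{k-\ell}{2}-n$ correctly (and you rightly read the factor in Corollary \ref{boundsonemod}(ii) as $1+\lambda_j^{-\ell}$, fixing the paper's sign typo), so the proposal amounts to writing out in full the proof the paper cites.
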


The proof of the above corollary is precisely as it appears in \cite[pp. 335--336]{Sturm}, 
since we have the same setup with Corollary \ref{boundsonemod}.

\section{Integral expressions for the standard metaplectic $L$-function}
\label{integralsection}

The main object of study --  the standard, twisted $L$-function $L_{\psi}(s, f, \chi)$ of an eigenform $f$ -- is introduced here and an integral expression, from \cite{Shimuraexp}, is taken and modified for our purposes. Throughout let $\delta := n\pmod{2}\in\{0, 1\}$.

Though the integral expression we obtain can be stated 
for any half-integral weight $k$, for ease of notation we take 
$k\geq n+1$ -- we shall be making this assumption later on anyway. 
For a prime $p$, the association of an $n$-tuple $(\lambda_{p, 1}, \dots, \lambda_{p, n})\in\mathbb{C}^n$ 
to a non-zero Hecke eigenform $f\in\mathcal{S}_k(\Gamma, \psi)$ is well known; this process is briefly outlined later in Sect. \ref{eisensteinsubsect1}. Then, for a Hecke character $\chi$ of conductor $\mfrak{f}$, we define our $L$-function by
\begin{align*}
	L_p(t)&:=
\begin{cases} 
	\displaystyle\prod_{i=1}^n (1-p^{n}\lambda_{p, i}t) &\text{if $p\mid \mfrak{c}$}, \\  
	\displaystyle\prod_{i=1}^n (1-p^{n}\lambda_{p, i}t)(1-p^{n}\lambda_{p, i}^{-1}t) 
&\text{if $p\nmid \mfrak{c}$};
\end{cases} \\
	\displaystyle L_{\psi}(s, f, \chi)&:=\prod_p L_p\left((\psi^{\mfrak{c}}
\chi^*)(p)p^{-s}\right)^{-1},
\end{align*}
in which $\chi^*(p) = \chi^*(p\mathbb{Z})$ is the ideal Hecke character associated to $\chi$ and
\[
	\psi^{\mfrak{c}}(x) := \left(\frac{\psi}{\psi_{\mfrak{c}}}\right)(x).
\]

Fix $\tau\in S_+$ such that $c_f(\tau, 1)\neq 0$ and let $\rho_{\tau}$ be the quadratic character associated to the extension $\mathbb{Q}(i^{[n/2]}\sqrt{|2\tau|})$; define $\mfrak{t}$ as an ideal in $\mathbb{Z}$ such that $h^T(2\tau)h\in 4\mfrak{t}^{-1}$ for all $h\in\mathbb{Z}^n$. Choose $\mu\in\{0, 1\}$ such that $(\psi\chi)_{\infty}(x) = \sgn(x_{\infty})^{[k]+\mu}$. The integral expression (4.1) in \cite[p. 342]{Shimuraexp} is stated there in immense generality and a lot of this simplifies in our setting, for example, in the notation of \cite{Shimuraexp}, we can just take $p = I_n$ and $D_F = 1$. The key ingredients of the integral are three modular forms: the eigenform $f$, a theta series $\theta$, and a normalised Eisenstein series $\mathcal{E}(z, s)$. If $\mu\in\{0, 1\}$ and $\chi$ is a Hecke character of conductor $\mfrak{f}$ satisfying $\chi_{\infty}(x)^n = \sgn(x_{\infty})^{n\mu}$, then the definition of the theta series $\theta$, taken from \cite[(2.1)]{Shimuraexp}, is 
\[
	\theta_{\chi}(z) = \theta_{\chi}^{(\mu)}(z; \tau) := \sum_{x\in M_n(\mathbb{Z})}(\chi_{\infty}\chi^*)^{-1}(|x|)|x|^{\mu}e_{\infty}(\tr(x^T\tau x z)).
\]
This has weight $\frac{n}{2}+\mu$, level determined by Proposition 2.1 of \cite{Shimuraexp}, character $\rho_{\tau}\chi^{-1}$, and coefficients in $\mathbb{Q}(\chi)$.

We define the Eisenstein series of weight $\kappa\in\frac{1}{2}\mathbb{Z}$ in a little more generality. Let $\Gamma_0 = \Gamma[\mfrak{x}^{-1}, \mfrak{xy}]$ be a congruence subgroup (contained in $\mfrak{M}$ if $\kappa\notin \mathbb{Z}$) and let $\ph$ be a Hecke character satisfying (\ref{char1}) with $\mfrak{y}$ in place of $\mfrak{c}$ and 
\begin{align}
	\ph_{\infty}(x) = \sgn(x_{\infty})^{[\kappa]}.\label{char3}
\end{align}
Note the above condition is more stringent than the usual parity condition of (\ref{char2}) when $n$ is even. Then the non-holomorphic Eisenstein series is defined as
\[
	E_{\kappa}(z, s;  \ph, \Gamma') := |y|^{s-\frac{\kappa}{2}} \sum_{\alpha\in P\cap\Gamma_0\bslsh\Gamma_0}\ph_{\mfrak{y}}(|a_{\gamma}|)j_{\gamma}^{\kappa}(z)^{-1}|\mu(\gamma, z)|^{\kappa-2s},
\]
for variables $z\in\mathbb{H}_n$, $s\in\mathbb{C}$. This sum is convergent for $\Re(s)>\frac{n+1}{2}$ \cite[p. 133]{Shimurabook}, and can be continued analytically to all of $s\in\mathbb{C}$ by a functional equation in $s\mapsto \frac{n+1}{2}-s$. This series belongs to $C_{\kappa}(\Gamma_0, \ph^{-1})$, and we normalise it using a product of Dirichlet $L$-functions as follows. Let $\mfrak{a}$ be any integral ideal and define
\begin{align}
\begin{split}
	L_{\mfrak{a}}(s, \varphi)&:=\prod_{p\nmid\mfrak{a}}(1-\varphi^*(p)p^{-s})^{-1}; \\
	\Lambda_{\mfrak{a}}^{n, \kappa}(s, \varphi)&:=\begin{cases}
		\displaystyle L_{\mfrak{a}}(2s, \varphi)
\prod_{i=1}^{[\frac{n}{2}]} L_{\mfrak{a}}(4s-2i, \varphi^2) &\text{if $\kappa\in\mathbb{Z}$}, \\
		\displaystyle\prod_{i=1}^{[\frac{n+1}{2}]} L_{\mfrak{a}}(4s-2i+1, \varphi^2) &\text{if $\kappa\notin\mathbb{Z}$}.
	\end{cases} 
\end{split}
\label{DirL}
\end{align}
The normalised Eisenstein series is given by
\[
	\mathcal{E}_{\kappa}(z, s) = \mathcal{E}_{\kappa}(z, s; \ph, \Gamma_0) := \overline{\Lambda_{\mfrak{y}}^{n, \kappa}(s, \ph)}E(z, \bar{s}; \kappa, \ph, \Gamma_0).
\]
Then the integral expression of \cite[(4.1)]{Shimuraexp} is:
\begin{align}\begin{split}
	L_{\psi}(s, f, \chi)&=\left[\Gamma_n
\left(\tfrac{s-n-1+k+\mu}{2}\right)2c_f(\tau, 1)\right]^{-1}N(\mfrak{b})^{\frac{n(n+1)}{2}}|4\pi\tau|^{\frac{s-n-1+k+\mu}{2}}\\
&\times \left(\tfrac{\Lambda_{\mfrak{c}}}{\Lambda_{\mfrak{y}}}\right)
\left(\tfrac{2s-n}{4}\right)\prod_{p\in\mathbf{b}}g_p\left((\psi^{\mfrak{c}}\chi^*)(p)p^{-s}\right)
	\langle f, \theta
\mathcal{E}\left(\cdot, \tfrac{2s-n}{4}\right)\rangle V, \label{intexp}
\end{split}
\end{align}
where 
\[
	\Lambda_{\mfrak{a}}(s) := \Lambda_{\mfrak{a}}^{n, k-n/2-\mu}(s, \eta);
\]
$\mfrak{y} = \mfrak{c}\cap(4\mfrak{tf}^2)$; $\mathbf{b}$ is a finite set of primes and $g_p\in\mathbb{Z}[t]$ are polynomials, defined for each $p\in\mathbf{b}$, such that $g_p(0) = 1$ (these arise as integral factors of a certain Dirichlet series $\alpha$, see Theorem 5.2 of \cite{Shimurahalf}); $\eta = \psi\chi\rho_{\tau}$; 
\[
	\mathcal{E}(z, s) := \mathcal{E}(z, s; k-\tfrac{n}{2}-\mu, \bar{\eta}, \Gamma[\mfrak{b}^{-1}, \mfrak{by}]);
\]
and $V := \Vol(\Gamma[\mfrak{b}^{-1}, \mfrak{by}]\bslsh\mathbb{H}_n)$.

Notice, by the definitions of (\ref{DirL}) above, that $\left(\frac{\Lambda_{\mfrak{c}}}{\Lambda_{\mfrak{y}}}\right)(\frac{2s-n}{4})$ is just a finite product of Euler factors twisted by $\eta$ and denote it by $\Lambda_{\mfrak{c}, \mfrak{y}}(s, \eta)$. We have $\Lambda_{\mfrak{c}, \mfrak{y}}(s, \eta)^{\sigma} = \Lambda_{\mfrak{c}, \mfrak{y}}(s, \eta^{\sigma})$ for any $\sigma\in\Aut(\mathbb{C})$.

We need some knowledge of algebraicity of our Eisenstein series; this is given by Theorem 3.2 in
\cite{Bouganis} and is restated next. For any $\ell\in\frac{1}{2}\mathbb{Z}$ define the set
\[
	\Omega_0:=\left\{ s\in\tfrac{1}{2}\mathbb{Z}\bigg| \left|s-\tfrac{n+1}{4}\right|+\tfrac{n+1}{4}-\tfrac{k-\ell}{2}\in \mathbb{Z}, 
\tfrac{n+1-k+\ell}{2}\leq s\leq \tfrac{k-\ell}{2}\right\}
\]
and, for any Hecke character $\ph$ of conductor $\mfrak{f}$, its Gauss sum to be
\[
	G(\ph) := \sum_{a=1}^{N(\mfrak{f})} \ph_{\mfrak{f}}^{-1}(a)e^{\frac{2\pi ia}{N(\mfrak{f})}}.
\]
There are exceptional cases where the Eisenstein series $\mathcal{E}_{\kappa}(z, \frac{2m-n}{4}; \ph, \Gamma_0)$ has different behaviour. The relevant ones are:
\begin{align}
	m &= n+1\ \text{and $\varphi^2 = 1$}; \label{X} \tag{\textbf{X}}\\
	n&=1, m = \tfrac{3}{2},\ \text{and $\varphi = 1$}; \label{R1}\tag{\textbf{R1}} \\
	n&>1, m = n+\tfrac{3}{2},\ \text{and $\varphi^2 = 1$}. \label{R2} \tag{\textbf{R2}}
\end{align}
Case (\ref{X}) affects neither of the main results, Theorems \ref{main} and \ref{newmain}, since the set of special values are strict half-integers. Neither Case (\ref{R1}) nor Case (\ref{R2}) affect the first main result, Theorem \ref{main}, since the set of special values excludes them, but they will have an effect on the second main result, Theorem \ref{newmain}.

\begin{theorem}[(Bouganis, \cite{Bouganis}, Th. 3.2)]\label{eisenalg} Fix a half-integral weight $k$, let $\ell\in\frac{1}{2}\mathbb{Z}$ satisfy $k-\ell>\frac{n+1}{2}$. Put $\Gamma_0 = \Gamma[\mfrak{x}^{-1}, \mfrak{xy}]$ (contained in $\mfrak{M}$ if $k-\ell\notin \mathbb{Z})$. Let $\ph$ be a Hecke character satisfying the usual property of (\ref{char1}) with $\mfrak{y}$ in place of $\mfrak{c}$, as well as (\ref{char3}) with $\kappa = k-\ell$. Exclude case (\ref{X}). For any $m$ with $\frac{2m-n}{4}\in \Omega_0$ we have 
\[
	\mathcal{E}_{k-\ell}(z, \tfrac{2m-n}{4}; \ph, \Gamma_0) = |\pi y|^{-r}\sum_{0\leq\tau\in S_{\mfrak{x}}^{\triangledown}} P(\tau, \varphi, y)e(\tr(\tau z)),
\]
where $P(\tau, \varphi, y)\in\mathbb{Q}_{ab}[\pi y_{ij}\mid 1\leq i\leq j\leq n]$, and
\[
	r := \begin{cases}  \frac{k-\ell}{2} - \frac{2m-n}{4} + 1 &\text{in cases (\ref{R1}) and (\ref{R2})}, \\
	\frac{k-\ell}{2} - |\frac{2m-2n-1}{4}| - \frac{n+1}{4} &\text{otherwise}.
	\end{cases}
\]
Set 
\[
	\beta_m := \tfrac{n}{2}(k-\ell+m-n)+\tfrac{\delta}{4},
\]
and define a period $\omega_{\ell}(m, \varphi) = \omega_{\ell}(\varphi) = \omega(\varphi)$ by
\[
	\omega(\varphi):=\begin{cases} i^{n|\frac{2k-2\ell-2m+n}{4}|+mn-\frac{3n^2-1}{4}} G(\varphi)G(\varphi^{n-1}) & \text{if $k-\ell\in\mathbb{Z}$ and  $m>n$}; \\
	i^{n|\frac{2k-2\ell-3n-2+2m}{4}|-\frac{n}{2}(3n+2-2m)}G(\varphi)^n &\text{if $k-\ell\in\mathbb{Z}$ and $m\leq n$}; \\
	i^{n|\frac{2k-2\ell-2m+n}{4}|-n(k-\ell)}G(\varphi\zeta)^n\zeta_8 &\text{if $k-\ell\notin\mathbb{Z}$, $n\in 2\mathbb{Z}$, $m> n$}; \\
	i^{n|\frac{2k-2\ell-2m+n}{4}|-n(k-\ell)+\nu}(2i)^{\frac{3n}{2}-m}G(\varphi\zeta)^nG(\ph)\zeta_8 &\text{if $k-\ell\notin\mathbb{Z}$, $n\notin 2\mathbb{Z}$, $m>n$}; \\
	i^{n|\frac{2k-2\ell-3n-2+2m}{4}|-\frac{n}{2}(3n+2-2m)}G(\varphi\zeta)^n\zeta_8 &\text{if $k-\ell\notin\mathbb{Z}$ and $m\leq n$};
\end{cases} 
\]
where $\zeta_8$ is a fixed eighth root of unity, $\zeta$ is the character induced by $h_{\gamma}(z)^2 = \zeta(\gamma)j(\gamma, z)$, and $\nu = 1$ if $n\equiv 1\pmod{4}$ but $\nu = 0$ otherwise.
Then we have
\[
	\left[\frac{P(\tau, \varphi, y)}{\pi^{\beta_m}\omega(\varphi)}\right]^{\sigma} = \frac{P(\tau, \varphi^{\sigma}, y)}{\pi^{\beta_m}\omega(\varphi^{\sigma})}
\]
for any $\sigma\in\text{Gal}(\mathbb{Q}_{ab}/\mathbb{Q})$ (acting on $P(\tau, \ph, y)$ coefficients wise).
\end{theorem}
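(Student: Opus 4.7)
The plan is to compute explicitly the Fourier expansion of $\mathcal{E}_{k-\ell}(z, \frac{2m-n}{4}; \varphi, \Gamma_0)$ and then show, coefficient by coefficient, that after normalisation by $\pi^{\beta_m}\omega(\varphi)$ the result lies in $\mathbb{Q}_{ab}[\pi y_{ij}]$ and transforms compatibly under $\Gal(\mathbb{Q}_{ab}/\mathbb{Q})$. First I would feed the definition of $E_{k-\ell}$ through the standard Bruhat decomposition to obtain a Fourier expansion of the shape
\[
	\mathcal{E}_{k-\ell}(z, s; \ph, \Gamma_0) = \sum_{\tau\in S^{\triangledown}_{\mfrak{x}}} A_\infty(\tau, y; s)\,A_{\mathbf{f}}(\tau; s, \ph)\, e(\tr(\tau x)),
\]
where the archimedean factor $A_\infty$ is a confluent hypergeometric function of the type $\xi(y,\tau;\alpha,\beta)$ studied by Shimura, and the non-archimedean factor $A_{\mathbf{f}}$ is an Euler product of local Siegel series twisted by $\ph$.

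Second, at finite primes the local Siegel series are rational functions in $p^{-s}$ multiplied by local $L$-factors of the shape appearing in $\Lambda_{\mfrak{y}}^{n,k-\ell}(s,\ph)$. The role of the normalising product $\Lambda_{\mfrak{y}}^{n,k-\ell}$ is precisely to cancel those local $L$-factors, leaving a finite polynomial in $\ph$-values and powers of $p$, which at $s=\tfrac{2m-n}{4}$ is manifestly algebraic, and on which any $\sigma\in\Gal(\mathbb{Q}_{ab}/\mathbb{Q})$ acts just by $\ph\mapsto\ph^{\sigma}$. Third, at the archimedean place I would invoke Shimura's closed-form evaluation of $\xi$ at critical points: when $\tfrac{2m-n}{4}\in\Omega_0$, this hypergeometric function collapses to $|y|^{-r}\pi^{\beta_m}$ times a polynomial in the $\pi y_{ij}$ and $e^{-2\pi\tr(\tau y)}$. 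This simultaneously accounts for the prefactor $|\pi y|^{-r}$, the transcendental power $\pi^{\beta_m}$, and (through the case split in Shimura's formula) the two-case definition of $r$; the cases (\ref{R1}) and (\ref{R2}) correspond to boundary points of $\Omega_0$ at which $\xi$ must be computed by extracting the residue of a pole of a $\Gamma$-factor, which shifts $r$ by one.

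Fourth, to verify Galois equivariance I would track the $\sigma$-action on $\ph$ through the Gauss sums: the standard identity $G(\ph)^{\sigma}=\ph^{\sigma}(u_{\sigma})G(\ph^{\sigma})$ (for a suitable adelic unit $u_{\sigma}$) together with its analogues for $G(\ph^{n-1})$ and $G(\ph\zeta)$ show that dividing by the power of Gauss sums prescribed in $\omega(\ph)$ cancels all the non-algebraic $u_{\sigma}$-twists. The factors $i^{\ast}$ and $\zeta_{8}$ in $\omega(\ph)$ play the same role for the archimedean contributions coming from $\xi$, while in the half-integral weight case the multiplier $\zeta$ defined by $h_{\gamma}^{2}=\zeta(\gamma)j(\gamma,z)$ enters $A_{\mathbf{f}}$ as a quadratic twist and is exactly what forces the Gauss sums to appear as $G(\ph\zeta)$ rather than $G(\ph)$.

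The hardest step will be the third, compounded by the bookkeeping of step four. The explicit evaluation of $\xi$ bifurcates depending on whether $m>n$ or $m\leq n$, on the parity of $n$, and on whether $k-\ell$ is integral or half-integral; in each subcase one must assemble the correct Gauss sums, powers of $i$, the extra $(2i)^{3n/2-m}$ in the odd-$n$ half-integral case, and the eighth root $\zeta_{8}$ so that everything lines up with the five-branch formula for $\omega(\ph)$. Keeping all of these transcendental factors precisely aligned — in particular, matching the $\zeta_{8}$ and $\nu$-contributions that arise from the half-integral theta multiplier against the Gauss sum transformation laws — so that the quotient in the statement actually lies in $\mathbb{Q}_{ab}[\pi y_{ij}]$ with the claimed $\sigma$-action, is where the bulk of the work sits.
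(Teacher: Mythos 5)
This statement is not proved in the paper at all: it is imported verbatim from Bouganis (Theorem 3.2 of the cited reference), as the attribution in its header indicates, so there is no internal proof to compare your attempt against. Your sketch --- Fourier expansion splitting into archimedean confluent hypergeometric factors and twisted local Siegel series, cancellation of the local $L$-factors by the normalisation $\Lambda_{\mfrak{y}}^{n,k-\ell}(s,\ph)$, Shimura's closed-form evaluation at points of $\Omega_0$ (with the exceptional cases (\ref{R1}), (\ref{R2}) shifting $r$ by one), and Gauss-sum bookkeeping for the Galois action in which the theta multiplier $\zeta$ forces $G(\ph\zeta)$ in place of $G(\ph)$ --- follows essentially the same route as the proof in that reference, so the approach is sound, with the caveat that what you have written is a plan whose real content lies in the multi-case archimedean computation you yourself flag as the hard part.
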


Let $\ell\in\frac{1}{2}\mathbb{Z}$ and fix $g\in \mathcal{M}_{\ell}(\Gamma', \psi')$,
where $\Gamma' = \Gamma[(\mfrak{b}')^{-1}, \mfrak{b}'\mfrak{c}']\leq\mfrak{M}$ if $\ell\notin\mathbb{Z}$ and $\psi'$ is a Hecke character satisfying the properties in (\ref{char1}) and (\ref{char2}) with $\mfrak{c} = \mfrak{c}'$ and $\kappa = \ell$. Let $k$ be a half-integral weight and set
\[ 
	\Omega_g':=
\begin{cases} 
	\left\{m\in\mathbb{R}\mid \tfrac{n-2m+2k-2\ell}{4}\in \mathbb{Z}, \tfrac{3n}{2}+1<m<k-\ell-\tfrac{3n}{2}\right\} 
&\text{if $g\notin\mathcal{S}_{\ell}$}; \\ 
	\left\{m\in\mathbb{R}\mid \tfrac{n-2m+2k-2\ell}{4}\in \mathbb{Z}, \tfrac{3n}{2}+1<m<k-\tfrac{3n}{2}, 
m\leq k-\ell+\tfrac{n}{2}\right\} &\text{if $g\in\mathcal{S}_{\ell}$}.
\end{cases}
\]
If $f\in\mathcal{M}_k(\Gamma, \psi)$ then we assume that $\psi/\psi'$ satisfies
\[
	(\psi/\psi')_{\infty}(x) = \sgn(x_{\infty})^{[k-\ell]},
\]
which is always satisfied if $n$ is odd by the properties of $\psi$ and $\psi'$ (\ref{char2}), and ensures that we can define the Eisenstein series
\[
	\mathcal{E}_{k-\ell}^{\bar{\psi}\psi'}(z, s) := \mathcal{E}_{k-\ell}(z, s; \bar{\psi}\psi', \Gamma\cap\Gamma').
\]
Recall that $\delta = n\pmod{2}\in\{0, 1\}$.

\begin{proposition}\label{newintegralexp} 
Let $k>2n$, $\ell$, and $g$ be as above. Set 
\[
	m_0:=\tfrac{2k+2\ell+2m-n}{4}
-\tfrac{n+1}{2}.
\]
For every $m\in\Omega'_g$ there exists $K(m, g)\in \mathcal{S}_k(\Gamma, \psi)$, 
whose Fourier coefficients lie in $\mathbb{Q}_{ab}(g)$, such that
\begin{align*}
	\frac{(4\pi)^{nm_0}}{\pi^{\beta_m}\omega_{\ell}(m, \bar{\psi}\psi')}\Gamma_n(m_0)^{-1}&\left\langle f, g\mathcal{E}_{k-\ell}^{\bar{\psi}\psi'}\left(\cdot, \tfrac{2m-n}{4}\right)\right\rangle=\pi^{n(k-r)-\frac{3n^2+2n+\delta}{4}}\langle f, K(m, g)\rangle
\end{align*}
for all $f\in \mathcal{S}_k(\Gamma, \psi)$. We also have $K(m, g)^{\sigma} = 
K(m, g^{\sigma})$ for all $\sigma\in\Aut(\mathbb{C})$.
\end{proposition}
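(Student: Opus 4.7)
The strategy is to apply the holomorphic projection operator of Theorem \ref{holoproj} to the non-holomorphic weight-$k$ form
\[
	F(z) := g(z)\,\mathcal{E}_{k-\ell}^{\bar\psi\psi'}\!\left(z,\, \tfrac{2m-n}{4}\right),
\]
and then identify $K(m, g)$ with $\mathbf{Pr}(F)$, after dividing by a transcendental scalar (and, if necessary, tracing from level $\Gamma \cap \Gamma'$ down to $\Gamma$). The parity hypothesis on $(\psi/\psi')_{\infty}$ ensures $F \in C_k^{\infty}(\Gamma \cap \Gamma', \psi)$, and the inequalities in the definition of $\Omega_g'$ are exactly the hypotheses needed to invoke Corollary \ref{boundstwo} at $b = \tfrac{2m-n}{4}$, so that $F$ is of bounded growth; meanwhile the integrality condition $\tfrac{n-2m+2k-2\ell}{4} \in \mathbb{Z}$ places the Eisenstein parameter into the arithmetic set $\Omega_0$ of Theorem \ref{eisenalg}. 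With $\langle f, F\rangle = \langle f, \mathbf{Pr}(F)\rangle$ in hand for every $f \in \mathcal{S}_k(\Gamma, \psi)$, the bulk of the argument is a coefficient-level calculation.

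I would next compute the Fourier coefficients of $\mathbf{Pr}(F)$ explicitly. Multiplying the expansion $g(z) = \sum_{\sigma} c_g(\sigma, 1) e(\tr(\sigma z))$ by the expansion of $\mathcal{E}_{k-\ell}^{\bar\psi\psi'}(z, \tfrac{2m-n}{4})$ from Theorem \ref{eisenalg} and collecting the coefficient of $e(\tr(\tau' x))$ writes $c_F(\tau', y)$ as a finite sum over decompositions $\sigma + \tau = \tau'$, each summand containing the factor $(\pi|y|)^{-r} P(\tau, \bar\psi\psi', y) e^{-2\pi\tr(\tau' y)}$. Feeding this into the formula of Theorem \ref{holoproj} reduces the Fourier coefficients of $\mathbf{Pr}(F)$ to integrals of the form
\[
	\int_Y P(\tau, \bar\psi\psi', y)\, e^{-4\pi\tr(\tau' y)}\, |y|^{k-r-n-1}\,dy,
\]
each of which evaluates monomial-by-monomial via the Gindikin--Koecher identity $\int_Y e^{-\tr(zy)}|y|^{s - (n+1)/2}\,dy = \Gamma_n(s)|z|^{-s}$ and its iterated derivatives in the $y_{ij}$, producing a $\mathbb{Q}$-rational combination of $\Gamma_n$-values at shifts of $m_0 = \tfrac{2k + 2\ell + 2m - n}{4} - \tfrac{n+1}{2}$, together with powers of $\pi$ and $|\tau'|$.

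The crux of the proof is then the bookkeeping of these transcendental constants. The factor $(4\pi)^{nm_0}\Gamma_n(m_0)^{-1}$ and the exponents $\beta_m$ and $n(k-r) - \tfrac{3n^2 + 2n + \delta}{4}$ are designed precisely so that, after multiplying through by the scalar in the statement, every power of $\pi$ and $\Gamma_n$-value cancels against the integral evaluation, leaving behind a combination of $c_g(\sigma, 1) \cdot P(\tau, \bar\psi\psi', y) / (\pi^{\beta_m}\omega_\ell(m, \bar\psi\psi'))$, which is a $\mathbb{Q}_{ab}$-linear combination by the equivariance statement in Theorem \ref{eisenalg}. Defining $K(m, g)$ to be the cusp form with these Fourier coefficients (if needed, after applying $\Tr^{\Gamma}_{\Gamma\cap\Gamma'}$ to descend the level, which preserves $\mathbb{Q}_{ab}(g)$-arithmeticity) produces the desired form in $\mathcal{S}_k(\Gamma, \psi)$, and the relation $K(m, g)^{\sigma} = K(m, g^{\sigma})$ follows at once by applying $\sigma \in \Aut(\mathbb{C})$ term-by-term to the coefficient formula: the $\sigma$-action on $c_g(\sigma, 1)$ yields $c_{g^{\sigma}}(\sigma, 1)$, while the $\sigma$-action on $P(\tau, \varphi, y)/(\pi^{\beta_m}\omega(\varphi))$ is controlled by Theorem \ref{eisenalg}.

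I expect the main obstacle to be exactly this matching of transcendental constants: not only must powers of $\pi$, matrix Gamma values, and Gauss-sum periods assemble into the prescribed scalar, but the exponent $r$ in Theorem \ref{eisenalg} branches between the generic case and the exceptional cases (\ref{R1}), (\ref{R2}), so the tally must be carried out in each branch. The separation of levels between $g$ and $F$ is a secondary technical point, handled by the trace step above; the rest of the argument is essentially a direct, albeit intricate, computation built on the two main tools already established -- Theorem \ref{holoproj} and Theorem \ref{eisenalg}.
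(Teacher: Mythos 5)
Your proposal follows essentially the same route as the paper's proof: bounded growth of $g\cdot\mathcal{E}$ via Corollary \ref{boundstwo}, holomorphic projection via Theorem \ref{holoproj}, explicit computation of the projected Fourier coefficients, rationality of the resulting matrix-Gamma integrals (your Gindikin--Koecher derivative argument is precisely the paper's proof of Lemma \ref{rationalintegral}), and $\sigma$-equivariance from Theorem \ref{eisenalg}. The only inessential difference is your worry about branching over cases (\ref{R1}) and (\ref{R2}): since every $m\in\Omega_g'$ satisfies $m>\tfrac{3n}{2}+1$, those cases (and case (\ref{X})) cannot occur here, so only the generic exponent $r=\tfrac{n-2m+2k-2\ell}{4}$ arises and no case-by-case tally is needed.
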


Before proving this, we require the following lemma, whose statement and proof is adapted from \cite[p. 343]{Sturm}.

\begin{lemma}\label{rationalintegral} 
Let $0<\tau\in S_{\mfrak{b}}^{\triangledown}$ and $P(y)\in\mathbb{Q}[y_{ij}\mid i\leq j]$. If $\nu$ is such that $\nu-\frac{1}{2}\in\mathbb{Z}$ and $\nu>n$, then we have
\[ 	
	|\tau|^{\frac{n}{2}}\Gamma_n\left(\nu-\tfrac{n+1}{2}\right)^{-1}\int_Y 
P(y)e^{-\tr(\tau y)}|y|^{\nu-\frac{n+1}{2}}d^{\times}y\in\mathbb{Q}. 
\]
\end{lemma}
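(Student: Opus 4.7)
The plan is to prove the lemma by reducing to the base case $P = 1$ via differentiation under the integral with respect to the entries of $\tau$. Since $d^{\times}y = |y|^{-(n+1)/2}\,dy$, the integrand rewrites as $P(y)e^{-\tr(\tau y)}|y|^{\nu - n - 1}\,dy$, and the starting point will be the classical Siegel gamma integral
\[
\int_Y e^{-\tr(\tau y)}|y|^{s - (n+1)/2}\,dy = \Gamma_n(s)|\tau|^{-s},
\]
valid whenever $\Re(s) > (n-1)/2$ and $\tau > 0$. Setting $s := \nu - (n+1)/2$ is admissible because the hypothesis $\nu > n$ gives $s > (n-1)/2$, and this already settles the $P = 1$ case.

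For a general polynomial $P \in \mathbb{Q}[y_{ij}\mid i\leq j]$, I would exploit the identity $\tr(\tau y) = \sum_i \tau_{ii}y_{ii} + 2\sum_{i<j}\tau_{ij}y_{ij}$ to write $y_{ij}\,e^{-\tr(\tau y)}$ as $-c_{ij}^{-1}(\partial/\partial \tau_{ij})e^{-\tr(\tau y)}$ with $c_{ii}=1$ and $c_{ij}=2$ for $i<j$. Iterating on each monomial and taking $\mathbb{Q}$-linear combinations produces a differential operator $D_P$ with rational coefficients such that $D_P\,e^{-\tr(\tau y)} = P(y)\,e^{-\tr(\tau y)}$. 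The polynomial growth of $P$ is dominated by the exponential decay, so differentiation may be passed under the integral, giving
\[
\int_Y P(y)e^{-\tr(\tau y)}|y|^{\nu - n - 1}\,dy = D_P\!\left[\Gamma_n(s)|\tau|^{-s}\right]_{s = \nu - (n+1)/2}.
\]

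The next ingredient is Jacobi's formula in the symmetric variables: $\partial\log|\tau|/\partial\tau_{ii} = (\tau^{-1})_{ii}$ and $\partial\log|\tau|/\partial\tau_{ij} = 2(\tau^{-1})_{ij}$ for $i<j$. Iterating shows that $D_P|\tau|^{-s}$ has the shape $|\tau|^{-s}Q_P(\tau^{-1};s)$, where $Q_P$ is a polynomial in the entries $(\tau^{-1})_{ij}$ with coefficients in $\mathbb{Q}[s]$. Evaluating at our rational $s$ and using that $\tau \in N(\mfrak{b})S^{\triangledown}$ has rational entries — hence so does $\tau^{-1}$ — yields $Q_P(\tau^{-1};s)\in\mathbb{Q}$.

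Multiplying by the prefactor $|\tau|^{n/2}\Gamma_n(\nu-(n+1)/2)^{-1}$ leaves $|\tau|^{n/2 - s}\,Q_P(\tau^{-1};s)$, and the main subtle point — the one thing that must be checked — is that the exponent comes out as an integer. A direct computation gives $n/2 - s = n + 1/2 - \nu$, and since $\nu - 1/2\in\mathbb{Z}$ one has $\nu = m + \tfrac{1}{2}$ with $m\in\mathbb{Z}$, whence $n/2 - s = n - m\in\mathbb{Z}$. So even though $|\tau|^{n/2}$ on its own is generally irrational, the half-integrality of $\nu$ forces it to combine with $|\tau|^{-s}$ into a rational power of $|\tau|$; together with $Q_P(\tau^{-1};s)\in\mathbb{Q}$ this delivers the desired rationality.
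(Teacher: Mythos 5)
Your proposal is correct and follows essentially the same route as the paper's proof: both start from the Siegel gamma integral $\int_Y e^{-\tr(Uy)}|y|^{\nu-\frac{n+1}{2}}d^{\times}y=\Gamma_n\left(\nu-\tfrac{n+1}{2}\right)|U|^{\frac{n+1}{2}-\nu}$, reduce $P$ to monomials, differentiate under the integral with respect to the entries of the parameter matrix, and use the half-integrality of $\nu$ together with the rationality of $\tau$ to turn the leftover power of $|\tau|$ into an integer power, hence a rational number. The only difference is expository: you spell out the Jacobi-formula bookkeeping and the exponent check that the paper leaves implicit.
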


\begin{proof} 
We can assume that $P(y) = \prod_{i\leq j}y_{ij}^{a_{ij}}$, where $0\leq a_{ij}\in\mathbb{Z}$. If $U = (U_{ij})\in Y$ then by definition
\[
	\int_Y |y|^{\nu-\frac{n+1}{2}}e^{-\tr(Uy)}d^{\times}y = \Gamma_n(\nu-\tfrac{n+1}{2})|U|^{\frac{n+1}{2}-\nu}
\]
and apply $\prod_{i\leq j}\left(\frac{\partial}{\partial U_{ij}}\right)^{a_{ij}}|_{U_{ij} = \tau_{ij}}$ to both sides. This gives
\[
	\int_Y P(y)e^{-\tr(\tau y)}|y|^{\nu-\frac{n+1}{2}}d^{\times}y\in \Gamma_n(\nu-\tfrac{n+1}{2})|\tau|^{\frac{n+1}{2}-\nu}\mathbb{Q},
\]
which, since $\frac{1}{2}-\nu\in\mathbb{Z}$, gives the lemma.
\end{proof}

\begin{proof}[Proof of Proposition~{\rm\ref{newintegralexp}}]
The function 
$\mathcal{F}(z)=g(z)\mathcal{E}_{k-\ell}^{\bar{\psi}\psi'}(z, \frac{2m-n}{4})\in C_k^{\infty}(\Gamma, \psi)$
has bounded growth if and only if
$F(z) := g(z)E_{k-\ell}(z, \frac{2m-n}{4};\bar{\psi}\psi', \Gamma\cap\Gamma')$ does. Since the Eisenstein series in $F$ is bounded above by $H_{k-\ell}(z, \frac{2m-n}{4}; \Gamma\cap\Gamma')$ we invoke Corollary \ref{boundstwo}.
That $\frac{2m-n}{4}$ for $m\in\Omega_g'$ satisfies the inequalities 
in Corollary \ref{boundstwo} is easy to check and 
so we indeed get bounded growth of $F$.

Therefore apply Theorem \ref{holoproj}; set
$\widetilde{K}(m, g)=\frac{(4\pi)^{nm_0}}{\pi^{\beta}\omega(\bar{\psi}\psi')}\Gamma_n(m_0)^{-1}\mathbf{Pr}(\mathcal{F})\in\mathcal{S}_k(\Gamma, \psi)$, and we get
\[ 
	\frac{(4\pi)^{nm_0}}{\pi^{\beta_m}\omega_{\ell}(m, \bar{\psi}\psi')}\Gamma_n(m_0)^{-1}\left\langle f, g
\mathcal{E}_{k-\ell}^{\bar{\psi}\psi'}(\cdot, \tfrac{2m-n}{4})\right\rangle =\langle f, \widetilde{K}(m, g)\rangle. 
\]
The Fourier coefficients of $\mathcal{F}$ are given, by Theorem \ref{eisenalg}, as
\[ 
	c_{\mathcal{F}}(\tau, y) = \sum_{\tau_1+\tau_2 = \tau} c_g(\tau_1, 1)
P(\tau_2, \bar{\psi}\psi',  y)|\pi y|^{-r}e^{-2\pi \tr(\tau y)},
\]
where in this case $r = \frac{n-2m+2k-2\ell}{4}$. So, by Theorem \ref{holoproj}, we obtain the Fourier expansion
\begin{align*}
	\widetilde{K}(m, g) &= \sum_{0<\tau\in S_{\mfrak{b}}^{\triangledown}}\left(\sum_{\tau_1+\tau_2=\tau}a(\tau_1, \tau_2)\right)
e(\tr(\tau z)), \\
	a(\tau_1, \tau_2) &= \mu(k, n)^{-1}c_g(\tau_1, 1)|4\tau|^{k-\frac{n+1}{2}}\frac{(4\pi)^{nm_0}}{\pi^{\beta_m + nr}\omega_{\ell}(m, \bar{\psi}\psi')}
\Gamma_n(m_0)^{-1}\\
	&\hspace{10pt}\times\int_Y P(\tau_2, \bar{\psi}\psi', y)e^{-4\pi\tr(\tau y)}|y|^{k-r-\frac{n+1}{2}}
d^{\times}y.
\end{align*}
The polynomial $P(\tau_2, \bar{\psi}\psi', y)$ has the form
\[
	P(\tau_2, \bar{\psi}\psi', y) = \sum_{1\leq i\leq j\leq n}\sum_{\alpha_{ij}}b_{ij}(\tau_2,\bar{\psi}\psi')\prod_{i, j}(\pi y_{ij})^{\alpha_{ij}},
\] 
for coefficients $b_{ij} = b_{ij}(\tau_2, \bar{\psi}\psi')\in\mathbb{Q}_{ab}$ and $\alpha_{ij}$ summing over some finite set of non-negative integers.
We have that $\mu(k, n)^{-1}\in\pi^{nk-\frac{3n^2+2n+\delta}{4}}\mathbb{Q}$ and that $|4\tau|^{k-\frac{n+1}{2}}
\in |\tau|^{\frac{n}{2}}\mathbb{Q}$. Note also that $m_0 = k-r-\frac{n+1}{2}$, that $k-r>n$, and that the substitution $y\mapsto (4\pi)^{-1}y$ gives us

\begin{align*}
	a(\tau_1, \tau_2) &\in \frac{c_g(\tau_1, 1)\pi^{nk-\frac{3n^2+2n+\delta}{4}}}{\pi^{\beta_m + nr}\omega_{\ell}(m, \bar{\psi}\psi')}\sum_{i, j, \alpha_{ij}}b_{ij}\left[|\tau|^{\frac{n}{2}}\Gamma_n(m_0)^{-1}\int_Y\left(\prod_{ij}y_{ij}^{\alpha_{ij}}\right)e^{-\tr(\tau y)}|y|^{m_0}\right]\mathbb{Q} \\
	&\in c_g(\tau_1, 1)\pi^{n(k-r)-\frac{3n^2+2n+\delta}{4}}\sum_{i, j}\frac{b_{ij}(\tau_2, \bar{\psi}\psi')}{\pi^{\beta_m}\omega_{\ell}(m, \bar{\psi}\psi')}\mathbb{Q},
\end{align*}
where we used Lemma \ref{rationalintegral}. We then obtain the result by putting $K(m, g) = \pi^{\frac{3n^2+2n+\delta}{4}-n(k-r)}\tilde{K}(m, g)$ and directly applying Theorem \ref{eisenalg}.
\end{proof}

If $m\in\Omega_{\theta_{\chi}}'$ then we can make some
simplifications to our integral expression of (\ref{intexp}). By putting $\ell = \frac{n}{2}+\mu$ into the notation of Proposition \ref{newintegralexp} we get $m_0 = \frac{m-n-1+k+\mu}{2}\in\frac{n}{2}+\mathbb{Z}$. Note that $|\tau|^{\frac{1}{2}}\in 2^{\frac{1}{2}}|2\tau|^{-\frac{1}{2}}\mathbb{Q}$ and $V\in \pi^{\frac{n(n+1)}{2}}\mathbb{Q}$. Then from the integral expression of (\ref{intexp}) we obtain
\begin{align*}
	L_{\psi}(m, f, \chi)\in &(2|2\tau|)^{-\frac{\delta}{2}}c_f(\tau, 1)^{-1}\Lambda_{\mfrak{c}, \mfrak{y}}(m, \eta)\prod_{p\in\mathbf{b}}g_p\left((\psi^{\mfrak{c}}\chi^*)(p)p^{-m}\right) \\
	&\times \pi^{\frac{n(n+1)}{2}}(4\pi)^{nm_0}\Gamma_n(m_0)^{-1}\left\langle f, \theta_{\chi}\mathcal{E}(\cdot, \tfrac{2m-n}{4})\right\rangle\mathbb{Q}.
\end{align*}
Notice that, if $\ell\in\frac{n}{2}+\mathbb{Z}$ we have $k-\ell\in \mathbb{Z}$ if and only if $n$ is odd, so relabel $\omega_{\delta}(\ph) := \omega_{\ell}(\ph)$ in this case. Multiplying both sides by $\pi^{-\beta_m}\omega_{\delta}(m, \bar{\eta})^{-1}$, and then applying Proposition \ref{newintegralexp} gives
\begin{align}
	\frac{(2|2\tau|)^{\frac{\delta}{2}}c_f(\tau, 1)L_{\psi}(m, f, \chi)}{\pi^{\beta_m+n(k-r)-\frac{3n^2+2n+\delta}{4}}\omega_{\delta}(m, \bar{\eta})} \in \Lambda_{\mfrak{c}, \mfrak{y}}(m, \eta)\prod_{p\in\mathbf{b}}g_p\left((\psi^{\mfrak{c}}\chi^*)(p)p^{-m}\right)\left\langle f, K(m, \theta_{\chi})\right\rangle\mathbb{Q}. \label{projintexp}
\end{align}

\section{Algebraicity of the inner product}
\label{algebraicitysection}

The algebraicity of the right-hand side of the integral expression (\ref{projintexp}), and subsequently of the special values, is immediate except for the inner product $\langle f, K(m, \theta_{\chi})\rangle$.
For a system of eigenvalues $\Lambda:\mathcal{R}_0\to\mathbb{C}$, where $\mathcal{R}_0$ is the space of Hecke operators (defined
in the next section or in \cite[pp. 39--41]{Shimurahalf}), let $\mathcal{S}_k(\Gamma, \psi, \Lambda)$ denote the eigenforms in $\mathcal{S}_k(\Gamma, \psi)$ whose eigenvalues are given by $\Lambda$. By \cite[Lemma 23.14]{Shimurabook} we have $f^{\sigma}\in\mathcal{S}_k(\Gamma, \psi^{\sigma}, \Lambda_{\sigma})$, where
\[
	\Lambda_{\sigma}(A(n)) = \Lambda(A(n))^{\sigma}\frac{(\sqrt{n})^{\sigma}}{\sqrt{n}},
\]
$\sigma\in\Aut(\mathbb{C})$, and $f\in\mathcal{S}_k(\Gamma, \psi, \Lambda)$. Let $\rho\in\Aut(\mathbb{C})$ denote the complex conjugation automorphism, and define $\eps\in\{0, 1\}$ by $\psi_{\infty}(x) = \sgn(x_{\infty})^{[k] + \eps}$. Note that $\eps = 0$ if $n$ is odd by (\ref{char1}).

\begin{theorem}\label{innerprod}
Assume that $k>\frac{7n}{2}+3 +\eps$. If $f\in\mathcal{S}_k(\Gamma[\mfrak{b}^{-1}, \mfrak{bc}], \psi, \Lambda)$ is a Hecke eigenform for ideals $(\mfrak{b}^{-1}, \mfrak{bc})\subseteq 2\mathbb{Z}\times 2\mathbb{Z}$, a Hecke character $\psi$, and a system of eigenvalues $\Lambda$, then there exists a non-zero constant $\mu'(\Lambda, k, \psi)$ -- dependent only on $\Lambda, k$, and $\psi$ -- such that
\[ 
	\left(\frac{\langle f, g\rangle}{\mu'(\Lambda, k, \psi)}\right)^{\sigma}
=\frac{\langle f^{\sigma}, g^{\rho\sigma\rho}\rangle}{\mu'(\Lambda_{\sigma}, k, \psi^{\sigma})},
\] 
for any $g\in\mathcal{S}_k(\Gamma[(\mfrak{b}')^{-1}, \mfrak{b}'\mfrak{c}'], \psi)$, ideals $((\mfrak{b}')^{-1}, \mfrak{b}'\mfrak{c}')\subseteq\mfrak{b}^{-1}\times\mfrak{bc}$, and $\sigma\in\Aut(\mathbb{C}/\mathbb{Q})$.
\end{theorem}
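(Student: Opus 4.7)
The plan is to isolate the contribution of the $\Lambda$-isotypic component of $g$ to $\langle f, g\rangle$ and reduce algebraicity to a single canonical period, namely the Petersson norm of a normalised generator of that component. Since Hecke operators in $\mathcal{R}_0$ are self-adjoint (up to the character) with respect to $\langle\cdot,\cdot\rangle$, distinct eigenspaces are mutually orthogonal, so pairing with $f$ automatically annihilates all summands but one.

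The first step is to enlarge levels: the natural inclusion $\mathcal{S}_k(\Gamma', \psi)\hookrightarrow\mathcal{S}_k(\Gamma\cap\Gamma', \psi)$ rescales the inner product only by a rational factor, so we may assume $f$ and $g$ both live in $\mathcal{S}_k(\Gamma'', \psi)$ for a common level $\Gamma''$. The second step is to construct a Hecke-theoretic projector $\pi_\Lambda$ onto the $\Lambda$-isotypic subspace $V_\Lambda\subseteq\mathcal{S}_k(\Gamma'', \psi)$; as $\mathcal{R}_0$ acts semisimply on this finite-dimensional space, Lagrange interpolation over the finite set of distinct eigensystems expresses $\pi_\Lambda$ as a polynomial in Hecke operators whose coefficients lie in the number field $\mathbb{Q}(\{\Lambda_i\})$. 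The third step is to fix $\tau_0$ with $c_f(\tau_0, 1)\neq 0$, to define the canonical period
\[
	\mu'(\Lambda, k, \psi):=c_f(\tau_0, 1)^{-1}\langle f, f\rangle,
\]
which under multiplicity one depends only on $(\Lambda, k, \psi)$ since rescaling $f$ multiplies numerator and denominator by the same factor, and then to write $\pi_\Lambda(g)=\lambda(g)f$ for an explicit scalar $\lambda(g)$ computed from the Fourier coefficients of $g$ via the Hecke-polynomial realisation of $\pi_\Lambda$. Then $\langle f, g\rangle=\overline{\lambda(g)}\langle f, f\rangle = \overline{\lambda(g)}c_f(\tau_0, 1)\mu'(\Lambda, k, \psi)$, and its Galois-equivariance reduces to that of $\lambda(g)$, which by \cite[Lemma 23.14]{Shimurabook} is governed precisely by $\Lambda\mapsto\Lambda_\sigma$ together with the $\sqrt{n}^\sigma/\sqrt{n}$ factor peculiar to half-integral weight.

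The twist $g\mapsto g^{\rho\sigma\rho}$ on the right-hand side accounts for the fact that $g$ sits in the antilinear slot of $\langle\cdot,\cdot\rangle$: the $\mathbb{C}$-linear $\sigma$-action on holomorphic Fourier expansions must be conjugated by the complex conjugation $\rho$ to commute with the conjugate-linear structure of the inner product. The main obstacle will be handling the possible \emph{failure} of multiplicity one; then $\pi_\Lambda(g)$ need no longer be proportional to $f$, and one must replace the single period by the determinant of the Gram matrix on $V_\Lambda$ together with a $\sigma$-compatible choice of basis of $V_\Lambda$ whose Fourier coefficients generate an appropriate extension. A secondary subtlety is verifying that $\mu'(\Lambda, k, \psi)$ is genuinely independent of the ambient level $\Gamma$, which one addresses by checking that the trace map between levels preserves the $\Lambda$-isotypic component and rescales the canonical period only by a $\sigma$-invariant rational factor.
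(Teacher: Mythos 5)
Your route (Hecke projector plus normalised Petersson norm as the period) is genuinely different from the paper's, but it contains a fatal, concrete gap: your period is not well defined as a function of $(\Lambda,k,\psi)$, and the identity you aim at cannot hold with it. Under the rescaling $f\mapsto cf$ the numerator $\langle f,f\rangle$ is multiplied by $c\bar{c}$ while the denominator $c_f(\tau_0,1)$ is multiplied by $c$, so your $\mu'$ is multiplied by $\bar{c}$; the claim that ``rescaling $f$ multiplies numerator and denominator by the same factor'' is false. Concretely, under multiplicity one your reduction gives
\[
	\frac{\langle f, g\rangle}{\mu'(\Lambda,k,\psi)} \;=\; \overline{\lambda(g)}\,c_f(\tau_0,1) \;=\; \overline{c_{\pi_{\Lambda}(g)}(\tau_0,1)}\cdot\frac{c_f(\tau_0,1)}{\overline{c_f(\tau_0,1)}},
\]
and the phase $c_f(\tau_0,1)/\overline{c_f(\tau_0,1)}$ depends on the choice of $f$ within its eigenclass (it changes by $c/\bar{c}$) and on $\tau_0$, not only on $\Lambda$. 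The left-hand side of the asserted equivariance then scales by $(c/\bar{c})^{\sigma}$ while the right-hand side scales by $c^{\sigma}/\overline{c^{\sigma}}$, and since $\sigma$ need not commute with $\rho$ in $\Aut(\mathbb{C})$ these differ; no choice of normalisation repairs this. Moreover, multiplicity one --- which your whole computation rests on --- is not available here (the theorem ranges over $g$ of deeper level $\Gamma'$, so the $\Lambda$-eigenspace at a common level carries oldform multiplicity, and multiplicity one is not known for Siegel or metaplectic forms of higher degree anyway), and your proposed fix via the Gram matrix is circular: the Gram entries $\langle f_i,f_j\rangle$ are exactly the transcendental inner products whose normalised Galois behaviour is the content of the theorem, and nothing in your argument gives their ratios any arithmetic meaning.

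The structural issue is that your proof never introduces an arithmetic identity linking the Petersson product to quantities with known Galois action; projectors, Lagrange interpolation and \cite[Lemma 23.14]{Shimurabook} only move the problem around. The paper instead defines
\[
	\mu'(\Lambda,k,\psi) := 2^{\frac{\delta}{2}}\pi^{-a}i^{-\frac{n^2}{2}}\omega_{\ell}(m_{\eps},\overline{\psi})^{-1}L_{\psi}(m_{\eps},f), \qquad m_{\eps}=k-2n-2-\eps,
\]
which manifestly depends only on $(\Lambda,k,\psi)$ because the $L$-function is built from the Satake parameters, and is non-zero because the hypothesis $k>\frac{7n}{2}+3+\eps$ places $m_{\eps}$ in the region $\Re(s)>\frac{3n}{2}+1$ of absolute convergence of the Euler product --- note that your proposal never uses this hypothesis, which is itself a warning sign. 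Galois equivariance then comes from the Rankin--Selberg expression (\ref{projintexp}): $c_f(\tau,1)L_{\psi}(m_{\eps},f,1_{\mfrak{m}})$ equals an explicit, $\sigma$-equivariant algebraic multiple of $\langle f, K(m_{\eps},\theta_{\mfrak{m}})\rangle$, where $K(m_{\eps},\theta_{\mfrak{m}})$ is the holomorphic projection of $\theta_{\mfrak{m}}\mathcal{E}$, has coefficients in $\mathbb{Q}_{ab}$, and satisfies $K(m_{\eps},\theta_{\mfrak{m}})^{\sigma}=K(m_{\eps},\theta_{\mfrak{m}}^{\sigma})$; dividing by the defining $L$-value cancels the transcendence. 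Finally the traces $\Tr_{\Gamma,\psi}^{\Gamma'}(K(m_{\eps},\theta_{\mfrak{m}}))$ are extended to a basis of $\mathcal{S}_k(\Gamma,\psi,\Lambda)$ and orthogonality of distinct eigenspaces (the one ingredient you and the paper share) handles arbitrary $g$. Any rescue of your approach would still require an identity of this Rankin--Selberg type; without it the theorem is out of reach of pure Hecke linear algebra.
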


\begin{proof} 
Recall that $\omega_{\delta} = \omega_{\frac{n}{2}+\eps}$ is the period of algebraicity for the Eisenstein series in Theorem \ref{eisenalg}. The constant is defined by
\[
\mu'(\Lambda, k, \psi):=
		2^{\frac{\delta}{2}}\pi^{-a}i^{-\frac{n^2}{2}}\omega_{\ell}(m_{\eps}, \overline{\psi})^{-1}L_{\psi}(m_{\eps}, f), 
\]
where $m_{\eps} := k-2n-2-\eps$, $a=\beta_{m_{\eps}}+n(k-r)-\frac{n^2+\delta}{4}$, $\ell = \frac{n}{2}+\eps$, and $L_{\psi}(s, f) : = L_{\psi}(s, f, 1)$. This constant is non-zero as the Euler 
product of $L_{\psi}(s, f, \varphi)$ is absolutely convergent for values 
$s$ with $\Re(s)>\frac{3n}{2}+1$ -- \cite[p. 332]{Shimuraexp} -- 
of which $s=m_{\eps}$ is such a value by our choice of $k$.
If $\mfrak{m}$ is an integral ideal let $1_{\mfrak{m}}$ denote the ideal Hecke character that is trivial modulo $\mfrak{m}$, and let $\theta_{\mfrak{m}}$ denote the theta series that we have been using, from \cite[(2.1)]{Shimuraexp}, but with $1_{\mfrak{m}}^*$ in place of $\eta^*$ and $\eps$ in place of $\mu$. Since $\eps = 0$ unless $n$ is even, the parity condition needed to define $\theta_{\mfrak{m}}$ is satisfied by $1_{\mfrak{m}}$.

Recall $\mfrak{t}$ as an integral ideal such that $h^T(2\tau)h\in 4\mfrak{t}^{-1}$ for all $h\in\mathbb{Z}^n$ and let $\mfrak{m} = \mfrak{tc}$, then Proposition 2.1 of
 \cite{Shimuraexp} gives the level $\Gamma[2, 2\mfrak{t}^3\mfrak{c}^2]$ of $\theta_{\mfrak{m}}$. Hence, we have $\theta_{\mfrak{m}}\in\mathcal{M}_{\frac{n}{2}+\eps}(\Gamma[\mfrak{b}^{-1}, \mfrak{bc'}], \rho_{\tau})$ and $f\in\mathcal{S}_k(\Gamma[\mfrak{b}^{-1}, \mfrak{bc}'], \psi)$, where $\mfrak{c}' := 2\mfrak{b}^{-1}\mfrak{t}^3\mfrak{c}^2$. Since
\[ 
	\Omega'_{\theta_{\mfrak{m}}}=\{m\in\tfrac{1}{2}\mathbb{Z}\mid\tfrac{k-m-\eps}{2}\in\mathbb{Z}, 
\tfrac{3n}{2}+1<m<k-2n-\eps\},
\]
we can take $m_{\eps} = k-2n-2-\eps\in\Omega'_{\theta_{\mfrak{m}}}$ and apply Proposition \ref{newintegralexp} with $g = \theta_{\mfrak{m}}$ to obtain the integral expression (\ref{projintexp}), with $\chi = 1_{\mfrak{m}}$, for $L_{\psi}(m_{\eps}, f, 1_{\mfrak{m}})$. 

The character appearing in the Eisenstein series of the integral expression is $\eta = \psi\rho_{\tau}$. We want to compare the period $\omega_{\delta}(m_{\eps}, \bar{\psi})$ appearing in the definition of $\mu'(\Lambda, k, \psi)$ with $\omega_{\delta}(m_{\eps}, \bar{\psi}\rho_{\tau})$ appearing in the integral expression. These consist primarily of Gauss sums, see Theorem \ref{eisenalg}, so note that
\[ 
	G(\eta)G(\eta^{n-1}) = G(\bar{\psi})G(\bar{\psi}^{n-1})\frac{G(\rho_{\tau})^n}{J(\rho_{\tau}, \rho_{\tau})^{n-1}J(\bar{\psi}, \rho_{\tau})J(\bar{\psi}^{n-1}, \rho_{\tau}^{n-1})},
\]
where $J(\chi_1, \chi_2) = \sum_{a (mod c)} \chi_1^*(a)\chi_2^*(1-a)$ is the Jacobi sum of any two Hecke characters $\chi_1, \chi_2$ modulo $c\mathbb{Z}$. In the case of $n$ odd and $s>n$ this gives
\[
	\omega_{\delta}(\bar{\psi}) = \omega_{\delta}(\eta)G(\rho_{\tau})^{-n}J(\rho_{\tau}, \rho_{\tau})^{n-1}J(\bar{\psi}, \rho_{\tau})J(\bar{\psi}^{n-1}, \rho_{\tau}^{n-1}). \]
The other cases are simpler and give $\omega_{\delta}(\bar{\psi}) = \omega_{\delta}(\eta)G(\rho_{\tau})^{-n}J(\bar{\psi}, \rho_{\tau})^n$. Denote these products of Jacobi sums by $\mathcal{J}_n(\eta)$ and evidently $\mathcal{J}_n(\eta)^{\sigma} = \mathcal{J}_n(\eta^{\sigma})$. Also, $L_{\psi}(s, f, 1_{\mfrak{m}}) = R_{\psi}(s, f)L_{\psi}(s, f)$, where $R_{\psi}(s, f) = \prod_{p\mid\mfrak{m}}L_p(\psi^{\mfrak{c}}(p)p^{-s})$, and put $P_{\psi}(m_{\eps}) = \prod_{p\in\mathbf{ b}}g_p(\psi^{\mfrak{c}}(p)p^{-m_{\eps}})$. The integral expression (\ref{projintexp}) thus becomes
\begin{align*}
	\frac{\langle f, K(m_{\eps}, \theta_{\mfrak{m}})\rangle}{\mu'(\Lambda, k, \psi)}&\in c_f(\tau, 1)\left[\frac{i^{\frac{n}{2}}|2\tau|^{\frac{1}{2}}}{G(\rho_{\tau})}\right]^n
\frac{\mathcal{J}_n(\eta)P_{\psi}(m_{\eps})}{\Lambda_{\mfrak{c}, \mfrak{y}}(m_{\eps}, \eta)}\mathbb{Q},
\end{align*}
the $\sigma$-equivariance of which is evident. So, for all $\sigma\in\Aut(\mathbb{C})$, we have
\[
	 \left[\frac{\langle f, K(m_{\eps}, \theta_{\mfrak{m}})\rangle}{\mu'(\Lambda, k, \psi)}\right]^{\sigma}
= \frac{\langle f^{\sigma}, K(m_{\eps}, \theta_{\mfrak{m}}^{\sigma})\rangle}{\mu'(\Lambda_{\sigma}, k, \psi^{\sigma})}. 
\]
For any congruence subgroup $\Gamma = \Gamma[\mfrak{x}^{-1}, \mfrak{xy}]$ let 
\[
	\Gamma^0 := \{\gamma\in\Gamma\mid a_{\gamma}\equiv d_{\gamma}\equiv 1\pmod{\mfrak{y}}\}.
\]
Suppose that $\Gamma_2\leq\Gamma_1\leq\mfrak{M}$ are two congruence subgroups, then decompose $\Gamma_1 = \bigsqcup_{i=1}^d\Gamma_2^0 \gamma_i$. The trace map is defined, for a Hecke character $\rho$, by
\begin{align*}
	\Tr_{\Gamma_1, \rho}^{\Gamma_2}:\mathcal{M}_k(\Gamma_2^0)
&\to\mathcal{M}_k(\Gamma_1, \rho) \\
	h&\mapsto\sum_{i=1}^d\rho_{\mfrak{c}_1}(|a_{\gamma_i}|)^{-1}h||_k\gamma_i,
\end{align*}
where $\Gamma_1 = \Gamma[\mfrak{b}_1^{-1}, \mfrak{b}_1\mfrak{c}_1]$.
By Lemma 5.4 of \cite{Bouganis} we have that 
$\Tr_{\Gamma_1, \rho}^{\Gamma_2}(h)^{\sigma} = 
\Tr_{\Gamma_1, \rho^{\sigma}}^{\Gamma_2}(h^{\sigma})$ for any $h\in\mathcal{M}_k(\Gamma_2^0)$ 
and any $\sigma\in\Aut(\mathbb{C})$ 

Therefore, if $\Gamma' = \Gamma[\mfrak{b}^{-1}, \mfrak{bc}']$ then $\Tr_{\Gamma, \psi}^{\Gamma'}(K(m_{\eps}, \theta_{\mfrak{m}}))\in\mathcal{S}_k(\Gamma, \psi)$. So far we have obtained
\[ 
	[\langle f, g\rangle\mu'(\Lambda, k, \psi)^{-1}]^{\sigma} 
= \langle f^{\sigma}, g^{\sigma}\rangle\mu'(\Lambda_{\sigma}, k, \psi^{\sigma})^{-1} 
\]
for all $g\in\{\Tr_{\Gamma_1, \psi}^{\Gamma_2}(K(m_{\eps}, \theta_{\mfrak{m}}))
\mid 0<\tau\in S_{\mfrak{b}}^{\triangledown}, \mfrak{m} =\mfrak{tc}\}$. The rest of 
the proof follows just as in \cite[p. 350]{Sturm} -- 
by extending the above set of $g$ into a basis for 
$\mathcal{S}_k(\Gamma, \psi, \Lambda)$ and using the orthogonal decomposition of $\mathcal{S}_k(\Gamma, \psi)$ into such eigenspaces.
\end{proof}

\begin{theorem}\label{main}
Let $f\in\mathcal{S}_k(\Gamma, \psi, \Lambda)$ be an eigenform for a half-integral weight $k$, congruence subgroup $\Gamma = \Gamma[\mfrak{b}^{-1}, \mfrak{bc}]$ contained in $\mfrak{M}$, Hecke character $\psi$ satisfying (\ref{char1}) and (\ref{char2}), and system of eigenvalues $\Lambda$. Assume that $k > \frac{7n}{2} + \eps$, where $\eps$ is given by $\psi_{\infty}(x) = \sgn(x_{\infty})^{[k]+\eps}$. Let $\chi$ be a Hecke character, and choose $\mu\in\{0, 1\}$ such that $(\psi\chi)_{\infty}(x) = \sgn(x_{\infty})^{[k]+\mu}$ and put $\eta = \psi\chi\rho_{\tau}$. Define the set
\[
	\Omega_{n, k}' := \left\{m\in\tfrac{1}{2}\mathbb{Z}\big|\tfrac{k-m-\mu}{2}\in\mathbb{Z}, \tfrac{3n}{2}+1<m<k-2n-\mu\right\}.
\]
Now if $\tau\in S_+$ is such that $c_f(\tau, 1)\neq 0$ and $m\in\Omega_{n, k}'$, then define
\[ 
	Y_{\psi}(m, f, \chi):=|\tau|^{\frac{\delta}{2}}
\pi^{-b}\mu'(\Lambda, k, \psi)^{-1}\omega_{\delta}(m, \bar{\eta})^{-1}L_{\psi}(m, f, \chi),
\]
where $b = \beta_m + n(k-r) - \frac{n^2+\delta}{4}$, $\ell = \frac{n}{2}+\delta$, and recall $\delta = n\pmod{2}\in\{0, 1\}$. We have $Y_{\psi}(m, f, \chi)^{\sigma} = 
Y_{\psi^{\sigma}}(m, f^{\sigma}, \chi^{\sigma})$ 
for all $\sigma\in\Aut(\mathbb{C}/\mathbb{Q})$ and hence
\[
	Y_{\psi}(m, f, \chi)\in\mathbb{Q}(f, \psi, \chi).
\]
\end{theorem}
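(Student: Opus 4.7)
The strategy is to combine the integral expression (\ref{projintexp}) with Theorem~\ref{innerprod} applied to $g = K(m, \theta_\chi)$. First, unpacking the definition of $\mu'(\Lambda, k, \psi)$ from the proof of Theorem~\ref{innerprod} and substituting (\ref{projintexp}) into the definition of $Y_\psi(m, f, \chi)$, one arrives at an identity of the form
\[
Y_\psi(m, f, \chi) \;=\; A(m, \chi) \cdot \frac{\langle f, K(m, \theta_\chi)\rangle}{\mu'(\Lambda, k, \psi)},
\]
where $A(m, \chi)$ is an explicit product of the $L$-ratio $\Lambda_{\mfrak{c}, \mfrak{y}}(m, \eta)$, the finite Euler factors $\prod_{p \in \mathbf{b}} g_p((\psi^{\mfrak{c}}\chi^*)(p) p^{-m})$, the Fourier coefficient $c_f(\tau, 1)^{-1}$, the period ratio $\omega_\delta(m, \bar\psi)/\omega_\delta(m, \bar\eta)$, and certain rational powers of $\pi$, $\tau$, $i$, and $2$.

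The second step is to verify that $A(m, \chi)$ is Galois-equivariant under $(\psi, \chi) \mapsto (\psi^\sigma, \chi^\sigma)$: one has $\Lambda_{\mfrak{c}, \mfrak{y}}(m, \eta)^\sigma = \Lambda_{\mfrak{c}, \mfrak{y}}(m, \eta^\sigma)$ directly from the definitions; the polynomials $g_p$ have rational coefficients; $c_f(\tau, 1)^\sigma = c_{f^\sigma}(\tau, 1)$; and the period ratio reduces, exactly as computed inside the proof of Theorem~\ref{innerprod}, to a product of Jacobi sums $\mathcal{J}_n(\eta)$ (satisfying $\mathcal{J}_n(\eta)^\sigma = \mathcal{J}_n(\eta^\sigma)$) and a Gauss-sum factor $G(\rho_\tau)^{-n}$ which, together with the $|2\tau|^{\delta/2}$ floating in $Y_\psi$, combines into a $\sigma$-invariant $n$-th power.

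The third step is to apply Theorem~\ref{innerprod}: the cusp form $K(m, \theta_\chi)$ lies in $\mathcal{S}_k(\Gamma', \psi)$ for a congruence subgroup $\Gamma' = \Gamma[\mfrak{b}^{-1}, \mfrak{bc}']$ dictated by the level of $\theta_\chi$, with $(\mfrak{b}^{-1}, \mfrak{bc}') \subseteq (\mfrak{b}^{-1}, \mfrak{bc})$ after enlarging $\mfrak{c}$ suitably, so the hypotheses of Theorem~\ref{innerprod} are met. By Proposition~\ref{newintegralexp} we have $K(m, \theta_\chi)^{\rho\sigma\rho} = K(m, \theta_\chi^{\rho\sigma\rho})$, and since $\chi$ takes values in the abelian extension $\mathbb{Q}_{ab}$, on which $\rho$ and $\sigma$ commute, $\chi^{\rho\sigma\rho} = \chi^\sigma$ and hence $\theta_\chi^{\rho\sigma\rho} = \theta_{\chi^\sigma}$ directly from the Fourier expansion of the theta series. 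Combining with the equivariance of $A$ yields
\[
Y_\psi(m, f, \chi)^\sigma \;=\; Y_{\psi^\sigma}(m, f^\sigma, \chi^\sigma),
\]
and the asserted membership in $\mathbb{Q}(f, \psi, \chi)$ follows because any $\sigma\in\Aut(\mathbb{C}/\mathbb{Q}(f, \psi, \chi))$ fixes this value.

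The main obstacle lies in the bookkeeping of the second step: reconciling the period $\omega_\delta(m, \bar\eta)$ appearing in the integral expression with the period $\omega_\delta(m, \bar\psi)$ buried in $\mu'(\Lambda, k, \psi)$ via Gauss- and Jacobi-sum identities, while keeping track of the different cases (parity of $n$, position of $m$ relative to $n$) in the definition of $\omega_\delta$. Fortunately this manipulation is essentially the same as that already carried out inside the proof of Theorem~\ref{innerprod} in the special case $\chi = 1_{\mfrak{m}}$, so the remaining task is to verify it extends uniformly in $\chi$.
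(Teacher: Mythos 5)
Your overall strategy is the paper's: substitute the integral expression (\ref{projintexp}) into the definition of $Y_{\psi}(m, f, \chi)$, observe $\Omega_{\theta_{\chi}}' = \Omega_{n, k}'$ so that Proposition \ref{newintegralexp} applies to $g = \theta_{\chi}$, and then invoke Theorem \ref{innerprod} for the quotient $\langle f, K(m, \theta_{\chi})\rangle\mu'(\Lambda, k, \psi)^{-1}$. Your third step, including the observation that $\rho\sigma\rho = \sigma$ on $\mathbb{Q}_{ab}$ and hence $K(m, \theta_{\chi})^{\rho\sigma\rho} = K(m, \theta_{\chi^{\sigma}})$, is correct and in fact more careful than the paper, which leaves this point implicit.

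However, your second step misdescribes the factor $A(m, \chi)$, and the error is not cosmetic. The period $\omega_{\delta}(m, \bar{\eta})$ by which $Y_{\psi}$ is normalised is \emph{literally the same} period appearing in the denominator of (\ref{projintexp}); the two cancel exactly, and $\mu'(\Lambda, k, \psi)$ is never unpacked -- it is carried as a single constant whose equivariance is precisely the content of Theorem \ref{innerprod}. (Note also that $\mu'$ contains $\omega_{\delta}(m_{\eps}, \bar{\psi})$ at the auxiliary point $m_{\eps} = k - 2n - 2 - \eps$, not $\omega_{\delta}(m, \bar{\psi})$ at $m$, so the ratio you write down does not even arise from unpacking it.) Thus $A(m, \chi)$ is nothing more than $c_f(\tau, 1)^{-1}\Lambda_{\mfrak{c}, \mfrak{y}}(m, \eta)\prod_{p\in\mathbf{b}}g_p\left((\psi^{\mfrak{c}}\chi^*)(p)p^{-m}\right)$ times a rational number: no period ratio, no powers of $i$, and -- crucially -- no powers of $\pi$. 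If a nontrivial power of $\pi$ genuinely survived in $A(m, \chi)$, as your description allows, the final deduction would collapse, since elements of $\Aut(\mathbb{C}/\mathbb{Q})$ do not fix $\pi$ and $A$ must be algebraic for equivariance to yield $Y_{\psi}(m, f, \chi)\in\mathbb{Q}(f, \psi, \chi)$. The exponent $b$ in the theorem is chosen exactly so that the $\pi$-powers cancel; if you found a residual $\pi^{-n(n+1)/2}$, that is because the printed exponent in (\ref{projintexp}) omits the volume contribution $V\in\pi^{\frac{n(n+1)}{2}}\mathbb{Q}$ recorded just above it -- tracking $V$ through shows the correct exponent there is $b$ itself.

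Consequently the ``main obstacle'' you identify -- redoing the Gauss-sum and Jacobi-sum comparison between $\omega_{\delta}(\bar{\psi})$ and $\omega_{\delta}(\bar{\eta})$ uniformly in $\chi$ -- is spurious. That comparison lives entirely inside the proof of Theorem \ref{innerprod}, where it is performed once, with the auxiliary trivial character $1_{\mfrak{m}}$ (so $\eta = \psi\rho_{\tau}$) at the single point $m_{\eps}$, in order to certify that the fixed constant $\mu'(\Lambda, k, \psi)$ has the stated equivariance property for \emph{all} admissible $g$; Theorem \ref{main} then uses that theorem as a black box. Once your step two is corrected to the cancellation just described, your argument coincides with the paper's proof.
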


\begin{proof}
Noting that $|\tau|^{\frac{\delta}{2}}\in (2|2\tau|)^{\frac{\delta}{2}}
\mathbb{Q}$ and $\Omega'_{\theta_{\chi}}= \Omega_{n, k}'$, we use the integral expression (\ref{projintexp}) to get
\[ 
	Y_{\psi}(m, f, \chi) \in c_f(\tau, 1)^{-1}\Lambda_{\mfrak{c}, \mfrak{y}}(m, \eta)
\prod_{p\in\mathbf{b}}g_p(\left(\psi^{\mfrak{c}}\chi^*)(p)p^{-m}\right)
\frac{\langle f, K(m, \theta_{\chi})\rangle}{\mu'(\Lambda, k, \psi)}\mathbb{Q},
\]
which, once we consider Theorem \ref{innerprod}, we see is 
$\sigma$-equivariant for all of $\Aut(\mathbb{C}/\mathbb{Q})$.
\end{proof}

\section{Algebraicity of metaplectic Eisenstein series}
\label{eisensteinsection}

So far we have determined the specific algebraicity for only some of the special values given in \cite{Shimurabook}. The aim of this lengthy section is to investigate and establish the precise algebraicity of the well-known decomposition $\mathcal{M}_k = \mathcal{S}_k\oplus\mathcal{E}_k$ which will allow the determination of the rest of these special values. Such a algebraicity is equivalent to proving Garrett's conjecture that the Klingen Eisenstein series $E(f)$ of a cusp form $f$ preserves algebraicity, see \cite{Garrett}. 

Due to its length, this section is split up into two subsections. The first is preliminary and the main aim is to relate the standard $L$-function of $\Phi f$ with $f$ where $\Phi$ is the Siegel Phi operator. This relation will be useful in the second subsection, which is the proof of Garrett's conjecture and the desired decomposition.
 
\subsection{Hecke eigenforms and the Siegel Phi operator}
\label{eisensteinsubsect1}

Here we give a relation between the $L$-function of an $n$-degree form $f$ and that of the $n-1$-degree form $\Phi f$, where $\Phi$ is the Siegel Phi operator. Such a relation has been studied before: for integral-weight forms this was done by Zharkovskaya in \cite{Zha} and later for non-trivial character by Andrianov in \cite{Andrianov}. It has also been established by Hayashida \cite{hayashida} for half-integral weight Siegel modular forms, using the Hecke ring construction of Zhuravlev in \cite{Zhu1}, \cite{Zhu2} and the results of Oh-Koo-Kim \cite{OOK}, but it is not clear how their setting translates to that of the present.

For a real variable $\rho$ the Siegel Phi operator is defined as
\begin{align*}
	\Phi:\mathcal{M}_k^n &\to \mathcal{M}_k^{n-1} \\
	f(z)&\mapsto \lim_{\rho\to\infty} f\begin{pmatrix} w & 0 \\ 0 & i\rho\end{pmatrix},
\end{align*}
for $z\in\mathbb{H}_n, w\in \mathbb{H}_{n-1}$. 

In order to establish the desired relation of $L$-functions of $\Phi f$ and $f$ we need to relate their Satake parameters. Define
\begin{align*}
	\Sx_p &:= M_n(\mathbb{Z}_p)\cap GL_n(\mathbb{Q}_p), &&\Sx:= GL_n(\mathbb{Q})_{\mathbf{f}}\prod_p \Sx_p, \\
	\So_p&:= GL_n(\mathbb{Z}_p), && \So:=\prod_p \So_p, \\
	Z_0&:=\{\diag[\tilde{q}, q]\mid q\in \Sx\}, && Z:= D[2, 2]Z_0D[2, 2],
\end{align*}
and certain metaplectic lifts
\begin{align*}
	\mfrak{D}[2, 2] &:= \pr^{-1}(D[2, 2]), && \mfrak{D} := \{\alpha\in\mfrak{D}[2, 2]\mid \pr(\alpha)\in G_{\mathbf{f}}\cap D\}, \\
	\mfrak{Z} &:= \pr^{-1}(Z), && \mfrak{Z}_0 := \{\alpha\in\mfrak{Z}\mid \pr(\alpha)\in G_{\mathbf{f}}\cap DZ_0D\}, \\
	\widehat{\mfrak{Z}}_0 &:= \{(\alpha, t)\mid t\in\mathbb{T}, \alpha\in\mfrak{Z}_0\}, && \widehat{\mfrak{D}} := \{(\alpha, 1)\in \widehat{\mfrak{Z}}_0\mid \alpha\in\mfrak{D}\},
\end{align*}
where $D = D[\mfrak{b}^{-1}, \mfrak{bc}]$ for ideals $(\mfrak{b}^{-1}, \mfrak{bc})\subseteq 2\mathbb{Z}\times 2\mathbb{Z}$. For any prime $p$ we use the subscript $p$ to denote the $p$th local component of any of the above adelic groups. All of the above sets are dependent on $n$ and, 
when we wish to distinguish this, we shall use $n$ as a superscript 
e.g. $\Sx_p^n$. The abstract Hecke ring $\mathcal{R}(\widehat{\mfrak{D}}, \widehat{\mfrak{Z}}_0)$ comprises all formal, finite sums 
\[ 
	\sum_{\sigma}c_{\sigma}\widehat{\mfrak{D}}\sigma \widehat{\mfrak{D}},
\]
with $c_{\sigma}\in\mathbb{C}$ and $\sigma\in \widehat{\mfrak{Z}}_0$. 

This abstract Hecke ring has a 
representation on the space of modular forms in 
$\mathcal{M}_k(\Gamma, \psi)$, where $\Gamma = G\cap D$, which we now describe. In \cite[p. 32]{Shimurahalf} Shimura defines a new factor of automorphy $J^k$, which extends the original $j^k$ to $\mfrak{Z}$ and has strong automorphic properties.
Now consider the element $\widehat{\mfrak{D}}(r_P(\sigma), 1)\widehat{\mfrak{D}}\in\mathcal{R}(\widehat{\mfrak{D}}, \widehat{\mfrak{Z}}_0)$,
where $\sigma = \diag[\tilde{q}, q]$ for $q\in \Sx$, then we have 
the decomposition $G\cap(D\sigma D) = \Gamma\xi\Gamma = \bigsqcup_{\alpha}
\Gamma\alpha$ for some $\alpha, \xi\in G\cap Z$. The 
representation of this element, denoted $T_{q, \psi}$, on $f\in\mathcal{M}_k(\Gamma, \psi)$ 
is given by
\[ 
	(f|T_{q, \psi})(z) = \sum_{\alpha}\psi_{\mfrak{c}}(|a_{\alpha}|)^{-1} 
J^k(\alpha, z)^{-1}f(\alpha z).
\]
Let
$\mathcal{R}_0$ be the factor ring of $\mathcal{R}(\widehat{\mfrak{D}}, \widehat{\mfrak{Z}}_0)$ modulo the ideal $\langle \widehat{\mfrak{D}}(\alpha, 1)\widehat{\mfrak{D}}-t\widehat{\mfrak{D}}(\alpha, t)\widehat{\mfrak{D}}\mid (\alpha, t)\in \widehat{\mfrak{Z}}_0\rangle$.
Since the action of this ideal is trivial on $\mathcal{M}_k(\Gamma, \psi)$, see \cite[p. 41]{Shimurahalf}, we have an action of this factor ring on modular forms defined as before. 
Then denote the element represented by $\widehat{\mfrak{D}}(r_P(\sigma), 1)\widehat{\mfrak{D}}$ in 
$\mathcal{R}_0$ by $A_q$, where $\sigma=\diag[\tilde{q}, q]$ 
with $q\in \Sx$, that is $A_q$ is the image of $T_{q, \psi}$ in 
$\mathcal{R}_0$. Finally we denote by $\mathcal{R}_{0p}$ the 
subalgebra of $\mathcal{R}_0$ that is generated by the $A_q$ for 
all $q\in \Sx_p$. As in \cite[pp. 41--42]{Shimurahalf}, define a map
\[ 
	\omega_p := \omega_{0p}\circ\Phi_p : 
\mathcal{R}_{0p}\to\mathbb{C}[x_1^{\pm 1}, \dots, x_n^{\pm 1}]
\]
as follows.
If $\sigma = \diag[\tilde{q}, q]$ with $q\in \Sx_p$, then 
$A_q\in \mathcal{R}_{0p}$ and we have a decomposition of the form
\begin{align} 
	D_p\sigma D_p = \bigsqcup_{x\in X}\bigsqcup_{s\in Y_x}
\bigsqcup_{d\in R_x}D_p\alpha_{d, s}, \hspace{20pt} \alpha_{d, s} 
= \begin{pmatrix}\tilde{d} & sd\\ 0 & d\end{pmatrix},  \label{satakecoset}
\end{align}
with $X\subseteq GL_n(\mathbb{Q}_p), R_x\subseteq x\So_p$ 
representing $\So_p\bslsh \So_px\So_p$, and $Y_x\subseteq S_p$. 
Then extend to all of $\mathcal{R}_{0p}$ by $\mathbb{C}$-linearity 
the following map
\[ 
	\Phi_p(A_q) := \sum_{d, s}J(r_P(\alpha_{d, s}))^{-1}\So_pd
\in \mathcal{R}(\So_p, GL_n(\mathbb{Q}_p)),
\]
where $J(\alpha) = J^{\frac{1}{2}}(\alpha, \mathbf{i})$ for $\alpha\in\mfrak{Z}$.
For the second map $\omega_{0p}$, note that any coset $\So_pd$ 
with $d\in GL_n(\mathbb{Q}_p)$ contains an upper triangular 
matrix of the form 
\begin{align} 
	\begin{pmatrix} 
	p^{a_{d_1}} & \star & \cdots & \star \\ 
	0 & p^{a_{d_2}} & \cdots & \star \\ \vdots & \vdots & \ddots & \vdots \\ 
	0 & 0 & \cdots & p^{a_{d_n}}
	\end{pmatrix}, \label{uppertriangular}
\end{align}
with $a_{d_i}\in\mathbb{Z}$, and then define
\[ 
	\omega_{0p}(\So_pd) = \prod_{i=1}^n (p^{-i}x_i)^{a_{d_i}} 
\]
which, via the decomposition $\So_px\So_p = \sum_{d} \So_pd$ 
and $\mathbb{C}$-linearity, we extend to obtain the map $\omega_{0p}:
\mathcal{R}(\So_p, GL_n(\mathbb{Q}_p))\to\mathbb{C}
[x_1^{\pm 1}, \dots, x_n^{\pm 1}]$. 

Assume that $p\nmid\mfrak{c}$. For an independent variable $u$ define a map 
$\Psi(\cdot, u):\mathcal{R}_{0p}^n\to\mathcal{R}_{0p}^{n-1}[u^{\pm 1}]$ as follows. 
Consider the generators $A_q\in \mathcal{R}_{0p}$ for $q\in \Sx_p$, 
with $D_p\sigma D_p$ having the decomposition (\ref{satakecoset}) and each $d$ 
having the form (\ref{uppertriangular}), and put
\[ 
	\Psi(A_q, u)=\sum_{x, d, s}\frac{J(r_P(\alpha_{d', s'}))}{J(r_P(\alpha_{d, s}))}
(up^{-n})^{a_{d_n}}D_p\begin{pmatrix} \tilde{d}' & s'd' \\ 0 & d'\end{pmatrix},
\]
where $A'$ denotes the upper left $n-1$ block of $A\in M_n$. Extend this to all of $\mathcal{R}_{0p}$ by $\mathbb{C}$-linearity. 
The map $\omega_p^{n-1}\times 1$ acts as $\omega_p^{n-1}$ on 
$\mathcal{R}_{0p}^{n-1}$ and as the identity on $u$. Then
\[ 
	(\omega_p^{n-1}\times 1)(\Psi(A_q, u)) 
= \sum_{d, s}J(r_P(\alpha_{d, s}))^{-1}(up^{-n})^{a_{d_n}}
\prod_{i=1}^{n-1}(p^{-i}x_i)^{a_{d_i}}. 
\]
So by defining $\phi_{n, u}(x_i) = x_i$ for $1\leq i\leq n-1$ and 
$\phi_{n, u}(x_n) = u$, extending $\mathbb{C}$-linearly to all of 
$\mathbb{C}[x_1^{\pm 1}, \dots, x_n^{\pm 1}]$, we get the commuting square
\begin{equation}
\begin{tikzcd}
	\mathcal{R}_{0p}^n  \arrow[d, "\Psi({\cdot, u})"] \arrow[r, "\omega_p^n"]
		& \mathbb{C}[x_1^{\pm 1}, \dots, x_n^{\pm 1}] \arrow[d, "\phi_{n, u}"] \\
	\mathcal{R}_{0p}^{n-1}[u^{\pm 1}] \arrow[r, "\omega_p^{n-1}\times 1"] 
		& \mathbb{C}[x_1^{\pm 1}, \dots, x_{n-1}^{\pm 1}, u^{\pm 1}]
\end{tikzcd} \label{commsq}
\end{equation} 
 By 
\cite[Lemma 2.6]{Shimuraint} we are able to choose $\alpha_{d, s}$ and $\alpha_{d', s'}$
of the form
\[
\alpha_{d, s} = \begin{pmatrix} g^{-1}h & g^{-1}\sigma\tilde{h} \\ 0 & g^T\tilde{h}\end{pmatrix}, \hspace{20pt} \alpha_{d', s'} = \begin{pmatrix} (g')^{-1}h' & (g')^{-1}\sigma'\tilde{h}' \\ 0 & (g')^T\tilde{h}'\end{pmatrix},
\]
where $g, h\in \Sx_p^n$, and $\sigma\in gS_{\mathbf{f}}(\mfrak{b}^{-1})g^T$. We have $J^k(r_P(\alpha), z) = J^{\frac{1}{2}}(r_P(\alpha), z)j(\alpha, z)^{[k]}$ by (2.1c) of \cite{Shimurahalf}, and by Lemma 2.4 of that same paper  $J^{\frac{1}{2}}(\alpha_{d, s}, z)$ is independent of $z$. So we see that
\begin{align}
	J^k(\alpha_{d, s}, z) = J(r_P(\alpha_{d, s}))J(r_P(\alpha_{d', s'}))^{-1}p^{a_{d_n}[k]}J^k(\alpha_{d', s'}, z). \label{Jidentity}
\end{align}

\begin{proposition}\label{fouriercalc} 
If $f\in\mathcal{M}_k(\Gamma, \psi)$ and $p\nmid\mfrak{c}$ then
\[ 
	\Phi(f|A_q) =
	(\Phi f)|\Psi(A_q, \psi_p^{-1}(p)p^{n-[k]}).
\] 
\end{proposition}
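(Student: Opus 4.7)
The approach is a direct computation: expand $(f|A_q)(z)$ via the coset decomposition \eqref{satakecoset}, substitute $z=\diag[w,i\rho]$, let $\rho\to\infty$, and match the resulting expression against the definition of $\Psi(A_q,u)$ to pin down $u$.

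First, with the upper-triangular choice of $d$ in \eqref{uppertriangular}, one has $|a_{\alpha_{d,s}}|=|d|^{-1}=p^{-\sum_i a_{d_i}}$, so the character factor is $\psi_{\mfrak{c}}(|a_{\alpha_{d,s}}|)^{-1}=\psi_{\mfrak{c}}(p)^{\sum_i a_{d_i}}$. I would then insert the Fourier expansion $f(z')=\sum_{\tau\geq 0}c_f(\tau)e(\tr(\tau z'))$ into $f(\alpha_{d,s}z)=f(\tilde{d}zd^{-1}+s)$ and use $\tr(\tau\alpha_{d,s}z)=\tr((d^{-1}\tau\tilde{d})z)+\tr(\tau s)$. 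Substituting $z=\diag[w,i\rho]$, the bottom-right entry of $d^{-1}\tau\tilde{d}$ equals $p^{-2a_{d_n}}\tau_{nn}$, so all terms with $\tau_{nn}>0$ decay as $\rho\to\infty$; for the surviving terms, $\tau\geq 0$ forces the entire last row and column of $\tau$ to vanish, and the corresponding Fourier coefficients are precisely $c_{\Phi f}(\tau_{11})$ by definition of $\Phi$. The limit therefore collapses to $\Phi f(\alpha_{d',s'}\cdot w)$, where $d',s'$ are the upper-left $(n-1)\times(n-1)$ blocks of $d,s$.

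Next, identity \eqref{Jidentity}, combined with $|d|^{[k]}=p^{a_{d_n}[k]}|d'|^{[k]}$ and Shimura's Lemma 2.4 giving $z$-independence of $J^{\frac{1}{2}}(\alpha_{d,s},z)$, factorises the automorphy factor in the limit as
\[
J^k(\alpha_{d,s},z)^{-1}=\frac{J(r_P(\alpha_{d',s'}))}{J(r_P(\alpha_{d,s}))}\,p^{-a_{d_n}[k]}\,J^k(\alpha_{d',s'},w)^{-1},
\]
yielding
\[
\Phi(f|A_q)(w)=\sum_{x,d,s}\psi_{\mfrak{c}}(p)^{\sum_i a_{d_i}}\,\frac{J(r_P(\alpha_{d',s'}))}{J(r_P(\alpha_{d,s}))}\,p^{-a_{d_n}[k]}\,J^k(\alpha_{d',s'},w)^{-1}\,\Phi f(\alpha_{d',s'}\cdot w).
\]
Comparing this coset-by-coset with the definition of $\Psi(A_q,u)$, whose action on $\Phi f$ contributes the scalar $\psi_{\mfrak{c}}(p)^{\sum_{i=1}^{n-1}a_{d_i}}(up^{-n})^{a_{d_n}}$, reduces everything to the identity $\psi_{\mfrak{c}}(p)^{a_{d_n}}p^{-a_{d_n}[k]}=(up^{-n})^{a_{d_n}}$, forcing $u=\psi_{\mfrak{c}}(p)\,p^{n-[k]}$.

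The final identification $\psi_{\mfrak{c}}(p)=\psi_p^{-1}(p)$ comes from the principal idele relation $\prod_v\psi_v(p)=1$: since $\psi_\infty(p)=1$ (as $p>0$) and $\psi_v(p)=1$ for every finite $v\neq p$ with $v\nmid\mfrak{c}$ (because $\psi_v$ is unramified and $p$ is a unit at such $v$), the only nontrivial contributions to $\prod_{v\text{ fin}}\psi_v(p)=1$ come from $v=p$ and $v\mid\mfrak{c}$, giving $\psi_{\mfrak{c}}(p)=\psi_p^{-1}(p)$ and hence the claimed $u=\psi_p^{-1}(p)\,p^{n-[k]}$. The chief obstacle is the careful bookkeeping of the half-integral factor $J^k$ through the limiting process: the clean factorisation depends essentially on the specific upper-triangular coset representatives from Shimura, Lemma 2.6, and the resulting $z$-independence of $J^{\frac{1}{2}}(\alpha_{d,s},z)$. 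Interchange of the finite $(x,d,s)$-sum, the absolutely convergent Fourier sum, and the limit $\rho\to\infty$ is routine.
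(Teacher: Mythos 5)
Your proposal is correct and is essentially the paper's own proof: the same expansion of $f|A_q$ over the cosets $\alpha_{d,s}$ with the Fourier series inserted, the same observation that the last diagonal entry $p^{-2a_{d_n}}\tau_{nn}$ kills all terms with $\tau_{nn}>0$ in the limit (positive semidefiniteness then forcing the last row and column of $\tau$ to vanish), the same use of the factorisation \eqref{Jidentity}, and the same coset-by-coset matching against $\Psi(A_q,u)$ to pin down $u=\psi_p^{-1}(p)p^{n-[k]}$. The only addition is your idele-relation justification of $\psi_{\mfrak{c}}(p)=\psi_p^{-1}(p)$, which the paper asserts without proof; that is a welcome but minor elaboration, not a different argument.
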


\begin{proof} Firstly, since $\psi_{\mfrak{c}}(p) = \psi_p^{-1}(p)$, we have
\[ 
	f|A_q = \sum_{x, d, s}\sum_{\tau\in S_+}\psi_{p}^{-1}(|d|)J^k(\alpha_{d, s}, z)^{-1}
c_f(\tau, 1)e(d^{-1}\tau\tilde{d}\cdot z + \tau s),
\]
and apply $\Phi$ to the above expression. If 
$\tau = \begin{psmallmatrix} \tau' & \star \\ \star & t\end{psmallmatrix}$ 
with $\tau'\in S_+^{n-1}$ and $0\leq t\in\mathbb{Z}$, then we know 
that the last diagonal entry of $d^{-1}\tau\tilde{d}$ is $p^{-2a_{d_n}}t$. 
Thus by writing $z=\begin{psmallmatrix} z' & 0 \\ 0 & i\lambda\end{psmallmatrix}$ 
and letting $\lambda\to\infty$, any terms involving $\tau$ for $t>0$ 
will tend to 0 and we are left only with terms involving $\tau\in S_+^n$ 
with $t=0$ -- these are precisely the elements of $S^{n-1}_+$. So for 
$\tau =\begin{psmallmatrix} \tau' & 0 \\ 0 & 0\end{psmallmatrix}$ 
we have
\[ 
	d^{-1}\tau\tilde{d}\cdot z + \tau s = \begin{pmatrix} (d')^{-1}
\tau'\tilde{d}'\cdot z' + \tau's' & 0 \\ 0 & 0 \end{pmatrix}. 
\]
By the relation between $J^k(\alpha_{d, s}, z)$ and $J^k(\alpha_{d', s'}, z)$ of (\ref{Jidentity}), we get
\begin{align*}
	\Phi(f|A_q) &= \sum_{x, d, s}\frac{J(r_P(\alpha_{d', s'}))}{J(r_P(\alpha_{d, s}))}\psi_p^{-1}(p^{a_{d_n}})p^{-a_{d_n}[k]} \\
&\times \left[\psi_{\mfrak{c}}(|d'|)J^k(\alpha_{d', s'}, z')^{-1}\sum_{\tau'\in S_+^{n-1}} 
c_f\left(\begin{pmatrix} \tau' & 0 \\ 0 & 0 \end{pmatrix}, 1\right)
e(\tau'(\tilde{d}'\cdot z' + s'd')(d')^{-1})\right],
\end{align*}
which is exactly $(\Phi f)|\Psi(A_q, \psi_p^{-1}(p)p^{n-[k]})$, as $\Phi f$ has Fourier coefficients $c_f\left(
\begin{psmallmatrix} \tau' & 0 \\ 0 & 0\end{psmallmatrix}, 1\right)$ 
for all $\tau'\in S_+^{n-1}$. 
\end{proof}

For each prime $p$ an element of $\mathbb{C}^n$ -- the Satake $p$-parameters -- is associated 
to a Hecke eigenform in $\mathcal{S}_k^n$ and this process, taken from \cite{Shimurahalf}, we outline briefly. In doing so, we are able to see how the elements 
of $\mathbb{C}^{n-1}$ and $\mathbb{C}^n$ for $\Phi f$ and $f$, 
respectively, are related. 

Assume that $p\nmid\mfrak{c}$. Define the operator
\[ 
	\mathcal{T}_p^n : = \sum_{m=0}^{\infty}A_{\psi}^n(p^m) t^m,
\]
where $A_{\psi}(p^m)$ is the sum of all $A_q$ with $|q| = p^m$. 
If $f$ be a Hecke 
eigenform of degree $n$ with $f|A_{\psi}^n(p^m) = \Lambda(p^m)f$ then
\begin{align}
	\Phi(f|\mathcal{T}_p^n) = \sum_{m=0}^{\infty} \Lambda(p^m)t^m \Phi f.  \label{eigenvalues}
\end{align}

Extend the 
definitions of $\Psi, \omega_p^n$, and $\omega_p^{n-1}$ to 
$\mathcal{T}_p$ by letting them act linearly on the coefficients. For any $1\leq \ell\in\mathbb{Z}$,
Theorem 4.4 in 
\cite[p. 42]{Shimurahalf} gives
\begin{align*}
\begin{split}
	\omega_p^{\ell}(\mathcal{T}_p^{\ell}) =
	\displaystyle\prod_{i=1}^{\ell} \frac{1-p^{2i-1}t^2}{(1-p^{\ell}x_it)(1-p^{\ell}x_i^{-1} t)} 
\end{split}
\end{align*}
and hence, for an $\ell$-degree eigenform $g$, the existence of the Satake $p$-parameters 
$(\lambda_{p, 1}, \dots, \lambda_{p, {\ell}})$
such that 
\[ 
	g|\mathcal{T}_p^{\ell} = 
	\displaystyle\prod_{i=1}^{\ell}\frac{1-p^{2i-1}t^2}
{(1-p^{\ell}\lambda_{p, i}t)(1-p^{\ell}\lambda_{p, i}^{-1}t)}g.
\]

Assume that $0\neq \Phi f$ has Satake $p$-parameters $(\lambda_{p,1}, \dots, \lambda_{p, n-1})$ for $p\nmid\mfrak{c}$. By the commuting square in (\ref{commsq}) we have
\begin{align*}
	\omega_p^{n-1}(\Psi(\mathcal{T}_p^n, u)) &=\phi_{n, u}(\omega_p^n(\mathcal{T}_p^n))\\
	& = \left[\prod_{i=1}^{n-1}\frac{1-p^{2i-1}t^2}{(1-p^nx_it)(1-p^nx_i^{-1}t)}\right] \frac{1-p^{2n-1}t^2}{(1-p^n ut)(1-p^n u^{-1}t)},
\end{align*}
so that
\begin{align}
	(\Phi f)|\Psi(\mathcal{T}_p^n, u) = \left[\prod_{i=1}^{n-1}\frac{1-p^{2i-1}t^2}{(1-p^n\lambda_{p, i}t)(1-p^n\lambda_{p, i}^{-1}t)}\right]\frac{1-p^{2n-1}t^2}{(1-p^nut)(1-p^nu^{-1}t)} \Phi f. \label{Psi1}
\end{align}
On the other hand Proposition \ref{fouriercalc} along with the identity in (\ref{eigenvalues}) above gives
\begin{align}
	 (\Phi f)|\Psi(\mathcal{T}_p, \psi_p^{-1}(p)p^{n-[k]}) = \Phi(f|\mathcal{T}_p^n) = \sum_{m=0}^{\infty}\Lambda(p^m)t^m\Phi f.  \label{Psi2} 
\end{align}
So, equating (\ref{Psi1}) and (\ref{Psi2}) with $u = \psi_p^{-1}(p)p^{n-[k]}$ we have proved the following.

\begin{proposition} \label{parameters} Let $f\in\mathcal{M}_k^n(\Gamma, \psi)$ be a non-zero eigenform such that $\Phi f\neq0$. Then $\Phi f$ is an eigenform of degree $n -1$. If $\Phi f$ has Satake $p$-parameters $(\lambda_{p, 1}, \dots, \lambda_{p, n-1})$ for $p\nmid\mfrak{c}$ then the Satake $p$-parameters of $f$ are  $(\lambda_{p, 1}, 
\dots, \lambda_{p, n-1}, \psi_p^{-1}(p)p^{n-[k]})$.
\end{proposition}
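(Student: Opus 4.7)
The strategy is to derive two independent expressions for $(\Phi f)|\Psi(\mathcal{T}_p^n, u_0)$, where $u_0:=\psi_p^{-1}(p)p^{n-[k]}$, and extract the Satake parameters of $f$ from the resulting rational identity in $t$.

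For the first expression I would extend Proposition \ref{fouriercalc} $\mathbb{C}$-linearly in $t$ to the generating series $\mathcal{T}_p^n=\sum_{m\geq 0} A_\psi^n(p^m)t^m$, obtaining $\Phi(f|\mathcal{T}_p^n)=(\Phi f)|\Psi(\mathcal{T}_p^n, u_0)$. Because $f|A_\psi^n(p^m) = \Lambda(p^m) f$ by the eigenform hypothesis, the left-hand side reduces to $\bigl(\sum_{m\geq 0}\Lambda(p^m)t^m\bigr)\Phi f$, giving equation (\ref{Psi2}).

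For the second expression, I would invoke the commuting square (\ref{commsq}) together with Shimura's formula
\[
\omega_p^n(\mathcal{T}_p^n)=\prod_{i=1}^n\frac{1-p^{2i-1}t^2}{(1-p^n x_i t)(1-p^n x_i^{-1}t)}
\]
to compute $\omega_p^{n-1}(\Psi(\mathcal{T}_p^n, u))=\phi_{n,u}(\omega_p^n(\mathcal{T}_p^n))$ explicitly. Via the Satake isomorphism at degree $n-1$, this prescribes the eigenvalue of $\Psi(\mathcal{T}_p^n, u)$ on any degree $n-1$ eigenform in terms of its Satake parameters, yielding (\ref{Psi1}) when evaluated at the parameters $(\lambda_{p,1},\dots,\lambda_{p,n-1})$ of $\Phi f$. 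Specialising $u = u_0$ and equating with the first expression collapses the problem to the rational identity
\[
\prod_{i=1}^{n}\frac{1-p^{2i-1}t^2}{(1-p^n\nu_i t)(1-p^n\nu_i^{-1}t)} = \prod_{i=1}^{n-1}\frac{1-p^{2i-1}t^2}{(1-p^n\lambda_{p,i}t)(1-p^n\lambda_{p,i}^{-1}t)}\cdot \frac{1-p^{2n-1}t^2}{(1-p^n u_0 t)(1-p^n u_0^{-1}t)},
\]
where $(\nu_1,\dots,\nu_n)$ are the Satake parameters of $f$ to be identified. Matching the denominators as polynomials in $t$ forces the multi-set $\{\nu_i,\nu_i^{-1}\}_{i=1}^n$ to coincide with $\{\lambda_{p,i},\lambda_{p,i}^{-1}\}_{i=1}^{n-1}\cup\{u_0, u_0^{-1}\}$, giving the claim modulo the usual $x\leftrightarrow x^{-1}$ Weyl symmetry.

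The main obstacle is to justify that $\Phi f$ is actually an eigenform for the full Hecke algebra $\mathcal{R}_{0p}^{n-1}$, so that its Satake parameters are well-defined in the first place. For this I would use the Satake isomorphism $\omega_p^{n-1}$ together with the commuting square to show that the specialised map $\Psi(\cdot, u_0):\mathcal{R}_{0p}^n\to\mathcal{R}_{0p}^{n-1}$ has image generating (or at least $\omega_p^{n-1}$-surjecting onto) the Weyl-invariant subring; combined with step one, and with $\Phi f\neq 0$, this forces simultaneous diagonalisation of $\Phi f$ under all of $\mathcal{R}_{0p}^{n-1}$.
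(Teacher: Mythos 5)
Your proposal follows the paper's own argument essentially step for step: the paper obtains (\ref{Psi2}) by extending Proposition \ref{fouriercalc} linearly to $\mathcal{T}_p^n$ and using the eigenform hypothesis, obtains (\ref{Psi1}) from the commuting square (\ref{commsq}) together with Shimura's formula for $\omega_p^{\ell}(\mathcal{T}_p^{\ell})$, and then equates the two rational expressions at $u=\psi_p^{-1}(p)p^{n-[k]}$ to read off the Satake parameters, exactly as you do. If anything you are more careful than the paper on one point: the paper simply assumes $0\neq\Phi f$ has Satake $p$-parameters and never spells out why $\Phi f$ is an eigenform for the full degree-$(n-1)$ Hecke algebra $\mathcal{R}_{0p}^{n-1}$, whereas you identify this as the main obstacle and sketch a viable fix (the specialised map $\Psi(\cdot,u_0)$ generates, under $\omega_p^{n-1}$, the Weyl-invariant subring, since the specialisation $x_n\mapsto u_0$ of the type-$C_n$ invariants generates the type-$C_{n-1}$ invariants when $u_0+u_0^{-1}\neq 0$).
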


Define the Hecke character $\chi := \psi^{-2}$. We can use the above Proposition \ref{parameters} to obtain a relation between $L_{\psi}^n(s, f, \chi)$ and $L_{\psi}^{n-1}(s-1, \Phi f, \chi)$. Assume that $\Phi f\neq 0$ has Satake $p$-parameters $(\lambda_{p, 1}, \dots, \lambda_{p, n-1})$ then, by Proposition \ref{parameters} above, the local Euler factor of $f$ at $p\nmid\mfrak{c}$ is
\[
	L_p^n\left((\psi^{\mfrak{c}}\chi^*)(p)p^{-s}\right) = 
	L_p^{n-1}\left((\psi^{\mfrak{c}}\chi^*)(p)p^{-s+1}\right)(1-\chi^*(p)p^{2n-[k]-s})(1-p^{[k]-s}),
\]
and the Euler factors at $p\mid\mfrak{c}$ are just 1 by definition of $\chi$. Therefore
\begin{align*}
	L_{\psi}^n(s, f, \chi) = L_{\psi}^{n-1}(s-1, \Phi f, \chi)L(s+[k]-2n,\chi)\zeta_{\mfrak{c}}(s-[k]),
\end{align*}
where $\zeta_{\mfrak{c}}$ is the Riemann zeta function with the Euler factors at $p\mid\mfrak{c}$ removed. By induction, for any $0\leq r'\leq n$ such that $\Phi^{n-r'}f\neq 0$, we get
\begin{align}
	L_{\psi}^n(s, f, \chi) = L_{\psi}^{r'}(s-n+r', \Phi^{n-r'}f, \chi)\prod_{i=0}^{n-r'-1}L(s+[k]-2n+i, \chi)\zeta_{\mfrak{c}}(s-[k]-i).  \label{PhiLfun}
\end{align}

\subsection{Klingen Eisenstein series}

Let $G'$ denote the image of $G$ under the embedding
\begin{align*}
	G&\to G_{\mathbb{A}} \\ x&\mapsto (x_v)_v,
\end{align*}
where $x_{\infty} = x$ and $x_p = I_{2n}$ for all primes $p$. Let $\mfrak{G} = \pr^{-1}(G')\leq M_{\mathbb{A}}$ and we have $\mfrak{G}\leq \mfrak{M}$. By \cite[p. 554]{Shimuraeis} the group $\mfrak{G}$ can be identified with the group of couples $(\alpha, q)$ where 
$\alpha\in G$ and $q:\mathbb{H}_n\to\mathbb{C}$ is a holomorphic 
function such that $q(z)^2/j_{\alpha}(z)\in\mathbb{T}$ is a constant, 
with group law $(\alpha, q)(\alpha', q') = (\alpha\alpha', q(\alpha'z)q'(z))$. This identification is given by $\alpha\mapsto (\alpha, h_{\alpha})$ and $\mfrak{G}$ acts on $f:\mathbb{H}_n\to\mathbb{C}$ as
\[
	(f||_k\xi)(z) := q(z)j(\alpha, z)^{[k]}f(\alpha z),
\]
where $\xi = (\alpha, q)\in\mfrak{G}$.

We have previously been 
considering congruence subgroups $\Gamma$ of $G$ that are contained 
in $\mfrak{M}$, and to such a congruence subgroup we define the group 
$\widehat{\Gamma} = \{(\alpha, h_{\alpha})\mid \alpha\in \Gamma\}\leq \mfrak{G}$. 
Indeed, the definition of a congruence subgroup of $\mfrak{G}$ is 
given in such a way -- it is a subgroup $\Delta\leq\mfrak{G}$ that is 
isomorphic to a congruence subgroup $\Gamma\leq G$ 
via $\Delta = \widehat{\Gamma}$. As such, congruence subgroups 
of $G$ and $\mfrak{G}$ are one and the same and we simply use $\Gamma$ to denote a congruence subgroup of $\mfrak{G}$ as well.

For an integer $r$ such that $0\leq r\leq n$ and for any $\alpha\in 
M_n(\mathbb{A}_{\mathbb{Q}})$ we write
\begin{align} 
	\alpha = \begin{pmatrix}\begin{pmatrix} a_1 & a_2 \\ a_3 & a_4\end{pmatrix} 
& \begin{pmatrix} b_1 & b_2 \\ b_3 & b_4\end{pmatrix} \\ 
	\begin{pmatrix} c_1 & c_2 \\ c_3 & c_4\end{pmatrix} 
& \begin{pmatrix} d_1 & d_2 \\ d_3 & d_4\end{pmatrix}\end{pmatrix}, \label{alpha}
\end{align}
where, for $x\in\{a, b, c, d\}$, we have $x_1\in M_r(\AAQ), x_2\in 
M_{r, n-r}(\AAQ), x_3\in M_{n-r, r}(\AAQ)$, and $x_4\in M_{n-r}(\AAQ)$. Also write
\[ 
	x_{\alpha} = \begin{pmatrix} x_1 & x_2 \\ x_3 & x_4\end{pmatrix} 
= \begin{pmatrix} x_1(\alpha) & x_2(\alpha) \\ x_3(\alpha) & x_4(\alpha)
\end{pmatrix} 
\]
when we wish to emphasis the matrix $\alpha$ to which these blocks 
belong. If $r=n$ then we make the natural understanding that 
$x_{\alpha} = x_1(\alpha)$ and likewise, for $r=0$, we have 
$x_{\alpha} = x_4(\alpha)$. Now for such an $n, r$ we define 
the following parabolic subgroup $P^{n, r}\leq Sp_n(\mathbb{Q})$ by
\begin{align*}
	P^{n, r} := \{\alpha\in Sp_n(\mathbb{Q})\mid a_2(\alpha) 
= c_2(\alpha) = 0, c_3(\alpha) = d_3(\alpha) = 0, c_4(\alpha) = 0\},
\hspace{10pt}\text{if $0<r<n$},
\end{align*}
with $P^{n, 0} = P^n$ and $P^{n, n} = Sp_n(\mathbb{Q})$. 
With $\alpha$ of the form in (\ref{alpha}) we have some maps:
\begin{align*}
	\pi_r:M_{2n}(\AAQ)&\to M_{2r}(\AAQ) \\
	\alpha&\mapsto \begin{pmatrix} a_1(\alpha) 
& b_1(\alpha) \\ c_1(\alpha) & d_1(\alpha)\end{pmatrix}; \\
	\lambda_r:M_{2n}(\AAQ) &\to \AAQ \\
	\alpha&\mapsto |d_4(\alpha)|.
\end{align*}
These define respective homomorphisms 
$P_{\mathbb{A}}^{n, r}\to Sp_r(\AAQ)$ and 
$P_{\mathbb{A}}^{n, r}\to\mathbb{I}_{\mathbb{Q}}$. 
On the metaplectic side let $\mfrak{P}^{n, r} := \{(\alpha, p)\in\mfrak{G}\mid\alpha\in P^{n, r}\}$ 
and extend $\pi_r, \lambda_r$ to $\mfrak{P}^{n, r}$ by letting
\begin{align*} 
	\pi_r((\alpha, q)) &:= (\pi_r(\alpha), |\lambda_r(\alpha)|^{-\frac{1}{2}}q')
\in\mfrak{G}^r, \\
	\lambda_r((\alpha, q)) &:= \lambda_r(\alpha_{\infty})\in\mathbb{Q}^{\times},
\end{align*}
where $q'(z) = q\left(\begin{psmallmatrix} z & w \\ w^T & z'\end{psmallmatrix}\right)$ 
does not depend on the choice of $w, z'$.

Supposing $0\leq r\leq n$, $\Gamma\leq \mfrak{G}^n$ is a 
congruence subgroup, $\psi$ is a Hecke character, and $K$ is some number field, then for $\mathcal{X}\in\{\mathcal{M}, \mathcal{S}\}$ 
we denote
\[ 
	\mathcal{X}_k^r(\Gamma\cap\mfrak{P}^{n, r}, \psi) := 
\{f\in \mathcal{X}_k^r\mid f||_k\pi_r(\gamma) = (\sgn^{[k]}\psi_{\mfrak{c}}^{-1})(\lambda_r(\gamma))
f\ \text{for all}\ \gamma\in \Gamma\cap\mfrak{P}^{n, r}\},
\]
and by $\mathcal{X}_k^r(\Gamma\cap\mfrak{P}^{n, r}, \psi, K)$ the forms 
of the above set with coefficients in $K$.
We are now ready to define a certain class of Eisenstein series, 
the so-called \emph{Klingen Eisenstein series}. If $0\leq r\leq n$ and 
$f\in\mathcal{S}_k^r(\Gamma\cap\mfrak{P}^{n, r}, \psi)$ then in 
\cite[pp. 547, 554]{Shimuraeis} these Eisenstein series are defined as
\begin{align*}
	E_k^{n, r} (z; f, \psi, \Gamma)
:= \sum_{\gamma\in(\Gamma\cap\mfrak{P}^{n, r})\bslsh\Gamma} \psi_{\mfrak{c}}(|a_{\gamma}|)f\left(z^{(r)}\right)\big|\big|_k\gamma,
\end{align*}
where $z^{(r)}$ is the upper left $r\times r$ block of $z$, 
and this is convergent provided $k>n+r+1$. In \cite[p. 356]{Shimuraexp} 
this was extended to all $k>\frac{n+r+3}{2}$. Put $E_k^{n, r}(z; f, \Gamma) := E_k^{n, r}(z; f, 1, \Gamma)$. By Lemma 8.11 of \cite{Shimuraeis} these are holomorphic if $k>2n$.

Assume that $k>2n$. The span of all such Eisenstein series is a space that will 
play a large role in this section and is denoted
\begin{align*} 
	\mathcal{E}_k^{n, r} := \spac\{E_k^{n, r}(z; f, \Gamma)||_k\alpha
\mid \alpha\in \mfrak{G}^n, f\in\mathcal{S}_k^r(\Gamma\cap\mfrak{P}^{n, r}, 1), \Gamma\leq\mfrak{G}^n\}.
\end{align*}
As $E^{n, n}_k(z; f, \Gamma) = f$, we have $\mathcal{E}_k^{n, n} = \mathcal{S}_k^n$. 
Set $\mathcal{E}_k^{n, r}(\Gamma, \psi) 
= \mathcal{E}_k^{n, r}\cap\mathcal{M}_k^n(\Gamma, \psi)$ for any congruence subgroup $\Gamma$ and Hecke character $\psi$. Their 
eminence in this section comes from a convenient decomposition, 
in their terms, of the space of modular forms.

\begin{theorem}[(Shimura, \cite{Shimuraeis}, pp.581--582)]\label{decomp} 
Let $k>2n$ be a half-integral weight. Then 
we have the decompositions
\begin{align*}
	\mathcal{M}_k^n &= \bigoplus_{r=0}^n \mathcal{E}_k^{n, r}, \\
	\mathcal{M}_k^n(\Gamma, \psi) &= \bigoplus_{r=0}^n \mathcal{E}_k^{n, r}(\Gamma, \psi).
\end{align*}
\end{theorem}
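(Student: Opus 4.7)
The plan is to prove both decompositions by induction on the codegree $n-r$, using the Siegel $\Phi$ operator as the principal filtration tool. The first step is to establish the compatibility of $\Phi$ with the Klingen construction: for $0 \leq r < n$ and $g \in \mathcal{S}_k^r(\Gamma\cap\mfrak{P}^{n, r}, \psi)$, a direct unfolding of the defining sum shows that $\Phi E_k^{n, r}(\cdot; g, \psi, \Gamma)$ is again of Klingen type in degree $n-1$, attached to $g$, with the same value of $r$. Iterating gives $\Phi^{n-r} E_k^{n, r}(\cdot; g, \psi, \Gamma) = c\, g$ for some explicit nonzero constant (arising from the half-integral factor $h_\alpha$ and the coset count that survives in the limit $\rho\to\infty$), while $\Phi^{n-r+1}$ annihilates it. By $\mfrak{G}$-equivariance these statements transfer to the whole space $\mathcal{E}_k^{n, r}$, giving a descending filtration
\begin{equation*}
	\mathcal{M}_k^n \supseteq \ker\Phi^n \supseteq \ker\Phi^{n-1} \supseteq \cdots \supseteq \ker\Phi = \mathcal{S}_k^n,
\end{equation*}
on which $\mathcal{E}_k^{n, r}$ sits in the $r$th graded piece.

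To show that the sum is direct, suppose $\sum_{r=0}^n F_r = 0$ with $F_r \in \mathcal{E}_k^{n, r}$. Applying $\Phi^n$ kills every $F_r$ with $r \geq 1$ and yields $\Phi^n F_0 = 0$. Since $F_0$ is a finite $\mathbb{C}$-linear combination of $\mfrak{G}^n$-translates of Eisenstein series attached to forms $g \in \mathcal{S}_k^0 = \mathbb{C}$, the injectivity of $\Phi^n$ on $\mathcal{E}_k^{n, 0}$ (inherited from the preceding paragraph) forces $F_0 = 0$. Apply $\Phi^{n-1}$ to $\sum_{r\geq 1} F_r = 0$ and iterate, obtaining $F_r = 0$ for every $r$.

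For the spanning statement one proceeds by descending induction on the codegree. Given $F \in \mathcal{M}_k^n$ (or $\mathcal{M}_k^n(\Gamma, \psi)$), put $g_0 := \Phi^n F \in \mathcal{M}_k^0$; this is a finite-dimensional space of constants, and the surjectivity result of the first paragraph furnishes $F_0 \in \mathcal{E}_k^{n, 0}$ with $\Phi^n F_0 = g_0$. Then $F - F_0$ lies in $\ker\Phi^n$, so $g_1 := \Phi^{n-1}(F - F_0)$ belongs to $\mathcal{S}_k^1$, and one subtracts an $F_1 \in \mathcal{E}_k^{n, 1}$ matching it under $\Phi^{n-1}$. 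Continuing for $r = 2, \dots, n$, the final remainder $F - \sum_r F_r$ lies in $\ker\Phi = \mathcal{S}_k^n = \mathcal{E}_k^{n, n}$, yielding the decomposition. The level-and-character refinement follows by the fact, guaranteed by the definition of $\mathcal{E}_k^{n, r}(\Gamma, \psi)$ and Shimura's trace-map construction in \cite{Shimuraeis}, that the Eisenstein pieces $F_r$ can be chosen within the same congruence subgroup and character as $F$.

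The main obstacle is the surjectivity claim $\Phi^{n-r}: \mathcal{E}_k^{n, r} \twoheadrightarrow \mathcal{S}_k^r$, since it requires both the convergence of the Klingen series defining the preimage and the nonvanishing of the constant $c$ above; here one uses the hypothesis $k > 2n$ (together with the extension in \cite[p. 356]{Shimuraexp} of the convergence range to $k > \frac{n+r+3}{2}$) and Lemma 8.11 of \cite{Shimuraeis} to ensure holomorphy, while the nonvanishing of $c$ reduces to a careful tracking of the metaplectic factor $h_\alpha$ under the identification of $\mfrak{P}^{n, r}$ with $\mfrak{G}^r$ via $\pi_r$. Once these are in hand the induction runs cleanly and both decompositions follow uniformly.
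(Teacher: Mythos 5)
Your proposal follows the right general skeleton --- an inductive subtraction of Klingen Eisenstein series organised by the Siegel operator, which is indeed how Shimura's proof (outlined in the paper in the proof of Corollary \ref{maincoro}) runs --- but it contains a genuine gap: you work throughout with the single operator $\Phi$, i.e.\ the constant term at the one cusp at infinity, where the argument requires the full multi-cusp operator. For a general congruence subgroup $\Gamma$ the kernel of $\Phi$ on $\mathcal{M}_k^n(\Gamma,\psi)$ is strictly larger than $\mathcal{S}_k^n(\Gamma,\psi)$: a cusp form must vanish at \emph{every} cusp, and already in degree one an Eisenstein series attached to a cusp other than $\infty$ lies in $\ker\Phi$ without being cuspidal. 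This is precisely why the paper introduces $\Phi_{\xi}f=\Phi(f||_k\xi^{-1})$ for representatives $\xi\in X_{n-1}$ of the cusps and the product map $\Phi_{\star}=(\Phi_{\xi})_{\xi}$, for which $\ker(\Phi_{\star})=\mathcal{S}_k^n(\Gamma,\psi)$ holds. The conflation breaks both halves of your argument. For directness, you invoke ``injectivity of $\Phi^n$ on $\mathcal{E}_k^{n,0}$'', which is false: by Lemma \ref{phieis} the translate $E_k^{n,0}(z;f,\psi,\xi\Gamma\xi^{-1})||_k\xi$ attached to a cusp $\xi$ other than the identity coset is a nonzero element of $\mathcal{E}_k^{n,0}$ that plain $\Phi^n$ annihilates; only the collection $\Phi_{\star}^{n}$ is injective there (Theorem \ref{isom}). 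For spanning, after subtracting $F_0$ you conclude that $g_1:=\Phi^{n-1}(F-F_0)$ ``belongs to $\mathcal{S}_k^1$''; but vanishing of the constant term at the single cusp $\infty$ only makes this degree-one form cuspidal at that one cusp, not a cusp form, so the next inductive step has no Klingen series to attach.

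The repair is exactly the multi-cusp bookkeeping of the paper: one forms, for each $r$, the combined series $F_k^{n,r}(z;f,\psi,\Gamma)=\sum_{\xi\in X_r}E_k^{n,r}(z;\Phi_{\xi}^{n-r}f,\psi,\xi\Gamma\xi^{-1})||_k\xi$, which by the cross-cusp delta property of Lemma \ref{phieis} matches $f$ at \emph{all} $r$-dimensional cusps simultaneously; then $\Phi_{\star}^{n}(f-f_0)=0$, hence $\Phi_{\nu}^{n-1}(f-f_0)$ is a cusp form for every $\nu\in X_{n-1}$, and the induction proceeds with $f_1=F_k^{n,1}(z;f-f_0,\psi,\Gamma)$ and so on, the final remainder lying in $\ker\Phi_{\star}=\mathcal{S}_k^n(\Gamma,\psi)$. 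Directness likewise rests on the isomorphism $\Phi_{\star}^{n-r}:\mathcal{E}_k^{n,r}(\Gamma,\psi)\to\prod_{\xi\in X_r}\mathcal{S}_k^r(\xi\Gamma\xi^{-1}\cap\mfrak{P}^{n,r},\psi)$ of Theorem \ref{isom}, not on injectivity of $\Phi^{n-r}$ alone. Your convergence and holomorphy points ($k>2n$, Lemma 8.11 of \cite{Shimuraeis}) are fine, but without replacing $\Phi$ by $\Phi_{\star}$ both the injectivity step and the cuspidality step fail.
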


\begin{remark} 
Originally in \cite{Shimuraeis} the above theorem was 
proven for the bound $k> 2n$; this bound was further improved in 
[\cite{Shimuraexp}, p.346]. We retain those of the former as later results 
will require us to take this bound regardless.
\end{remark}

For any integral ideal $\mfrak{a}$ we let $\mathcal{R}_0^{\mfrak{a}}$ 
(resp. $\mathcal{R}_{\mfrak{a}}(\widehat{\mfrak{D}}, \widehat{\mfrak{Z}}_0)$) denote the subspace of $\mathcal{R}_0$ 
(resp. $\mathcal{R}(\widehat{\mfrak{D}}, \widehat{\mfrak{Z}}_0)$) generated by all $A_q$ (resp. $T_{q, \psi}$) 
with $q_p\in \Sx_p$ for all $p$ and $q_p\in \So_p$ if $p\mid\mfrak{a}$.

\begin{theorem}\label{harris} 
Let $r, r'\in\mathbb{Z}$, $0\leq r'\leq r\leq n$, and assume
$[k]>\frac{n}{2}+r'+1$. Consider two non-zero Hecke eigenforms $f\in \mathcal{E}_k^{n, r}
(\Gamma, \psi)$, $f'\in \mathcal{E}_k^{n, r'}(\Gamma, \psi)$ with the same eigenvalues for $\mathcal{R}_0^{\mfrak{c}}$. Then $r=r'$.
\end{theorem}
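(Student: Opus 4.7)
The plan is to compare the standard $L$-functions of $f$ and $f'$ via the factorisation \eqref{PhiLfun}, detecting the Klingen rank through the location of poles. First I would put $\chi := \psi^{-2}$ and apply the Siegel $\Phi$ operator iteratively: since $f \in \mathcal{E}_k^{n,r}(\Gamma, \psi)$ is built from Klingen data of rank $r$, the form $g := \Phi^{n-r} f$ is a non-zero cuspidal Hecke eigenform in $\mathcal{S}_k^r$ (each intermediate $\Phi^j f$ remains an eigenform by Proposition \ref{parameters}); analogously, $g' := \Phi^{n-r'} f' \in \mathcal{S}_k^{r'}$. Iterating \eqref{PhiLfun} then yields
\[
L_{\psi}^{n}(s, f, \chi) = L_{\psi}^{r}(s - n + r,\, g,\, \chi) \prod_{i=0}^{n-r-1} L(s + [k] - 2n + i,\chi)\,\zeta_{\mfrak{c}}(s - [k] - i),
\]
and the analogous identity holds for $f'$ with $r', g'$ in place of $r, g$.

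Since $f$ and $f'$ share eigenvalues on $\mathcal{R}_0^{\mfrak{c}}$, their Satake parameters agree at every $p \nmid \mfrak{c}$, and the choice $\chi = \psi^{-2}$ renders the local factors at $p \mid \mfrak{c}$ trivial, so that $L_{\psi}^n(s, f, \chi) = L_{\psi}^n(s, f', \chi)$ as meromorphic functions. Suppose for contradiction that $r > r'$, and cancel the common Dirichlet and zeta factors for $i = 0, \dots, n-r-1$ to obtain
\[
L^{r}_{\psi}(s - n + r,\, g,\, \chi) = L^{r'}_{\psi}(s - n + r',\, g',\, \chi) \prod_{i=n-r}^{n-r'-1} L(s + [k] - 2n + i,\chi)\,\zeta_{\mfrak{c}}(s - [k] - i).
\]

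I would then evaluate at $s_0 := [k] + n - r'$. On the right, the term $\zeta_{\mfrak{c}}(s_0 - [k] - (n-r'-1)) = \zeta_{\mfrak{c}}(1)$ contributes a simple pole, while the remaining $\zeta_{\mfrak{c}}$-factors hit positive integers $\geq 2$, the Dirichlet $L$-factors lie inside their region of absolute convergence, and $L^{r'}_{\psi}$ is evaluated at $[k]$, which the hypothesis $[k] > \tfrac{n}{2} + r' + 1$ (combined with $r' < n$, since $r > r'$ and $r \leq n$) places strictly beyond $\tfrac{3r'}{2} + 1$, within the region of absolute convergence of its Euler product. On the left, $L^{r}_{\psi}(\cdot, g, \chi)$ is evaluated at $[k] + r - r'$, and $[k] > \tfrac{n}{2} + r' + 1 \geq \tfrac{r}{2} + r' + 1$ (using $r \leq n$) forces this argument to exceed $\tfrac{3r}{2} + 1$, again placing it inside absolute convergence. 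The right-hand side therefore has a genuine simple pole at $s_0$ while the left-hand side is holomorphic there, a contradiction; hence $r = r'$.

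The main obstacle, I expect, is justifying rigorously that $\Phi^{n-r} f$ is a non-zero cuspidal Hecke eigenform for a \emph{general} eigenform $f \in \mathcal{E}_k^{n,r}(\Gamma, \psi)$ (and not merely for a Klingen Eisenstein series of the natural form $E_k^{n,r}(z; g)$), so that Proposition \ref{parameters} and the factorisation \eqref{PhiLfun} can be applied iteratively without gaps. The pole comparison itself is then a careful but essentially routine bookkeeping within the absolute-convergence regions dictated by the hypothesis on $[k]$.
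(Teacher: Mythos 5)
Your proposal is, in substance, the paper's own argument: both hinge on the factorisation (\ref{PhiLfun}) with $\chi=\psi^{-2}$ (so that the bad Euler factors are trivial and equality of $\mathcal{R}_0^{\mfrak{c}}$-eigenvalues gives equality of $L$-functions), evaluation at $s_0=[k]+n-r'$, and the contradiction between the pole of $\zeta_{\mfrak{c}}(1)$ and finiteness coming from absolute convergence. Your inequality bookkeeping (checking $[k]+r-r'>\tfrac{3r}{2}+1$, $[k]>\tfrac{3r'}{2}+1$, and that the leftover Dirichlet and zeta factors are finite and non-vanishing at $s_0$) is correct. The only structural difference is packaging: the paper first reduces to $r=n$ ``as in Harris'', so that $f$ is literally a cusp form and (\ref{PhiLfun}) is applied only to $f'$, whereas you apply the factorisation to both forms and cancel the common factors $i=0,\dots,n-r-1$; the two are equivalent, and your cancellation is legitimate as an identity of meromorphic functions.

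The obstacle you flag, however, is a genuine gap in the argument as written, not a cosmetic one. For a general non-zero eigenform $f\in\mathcal{E}_k^{n,r}(\Gamma,\psi)$ it is \emph{false} that $\Phi^{n-r}f\neq 0$: the operator $\Phi^{n-r}$ sees only the standard cusp, and by Lemma \ref{phieis} every form of the shape $E_k^{n,r}(z;h,\psi,\xi\Gamma\xi^{-1})||_k\xi$ with $\xi\neq 1$ in $X_r$ is annihilated by it; moreover the subspace of forms annihilated by all $\Phi_{\nu}^{n-r}$ with $\nu\neq\xi$ is stable under $\mathcal{R}_0^{\mfrak{c}}$ (by the commutation $\Phi_{\nu}(F|A)=(\Phi_{\nu}F)|A^{*}$, i.e.\ Lemma \ref{heckeslash} and claim (2) in the proof of Proposition \ref{eigenspan}), so it contains eigenforms whenever it is non-zero. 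The repair uses exactly the paper's cusp machinery: by injectivity of $\Phi_{\star}^{n-r}$ on $\mathcal{E}_k^{n,r}(\Gamma,\psi)$ (Theorem \ref{isom}) there is \emph{some} $\xi\in X_r$ with $\Phi_{\xi}^{n-r}f\neq 0$; by Lemma \ref{heckeslash} the translate $f||_k\xi^{-1}$ is again an eigenform with the same $\mathcal{R}_0^{\mfrak{c}}$-eigenvalues, hence the same standard $L$-function, and $\Phi^{n-r}(f||_k\xi^{-1})=\Phi_{\xi}^{n-r}f$ is a non-zero cusp eigenform (cuspidality again by Theorem \ref{isom}). Replacing $f$ and $f'$ by such translates closes your argument. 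This is precisely what is hidden in the paper's opening line ``we may assume $r=n$'' following Harris: in Harris's full-level setting there is a single cusp in each rank, so $\Phi_{\star}=\Phi$ and the issue never arises, while here it must be handled by conjugation.
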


\begin{proof}
As in \cite[p. 309]{Harris} we may assume that $r=n$ and therefore that
$f$ is a cusp form. Assume for a contradiction that $r'<r$. Since $f$ and $f'$ 
share the same eigenvalues for $\mathcal{R}_0^{\mfrak{c}}$ we have $L_{\psi}^n(s, f, \chi) = L_{\psi}^n(s, f', \chi)$,  where we have set $\chi := \psi^{-2}$. The relation in (\ref{PhiLfun}) obtained at the end of the last subsection then gives
\begin{align*}
 	L_{\psi}^n(s, f, \chi) = L_{\psi}^{r'}(s-n+r', \Phi^{n-r'}f', \chi)\prod_{i=0}^{n-r'-1}L(s+[k]-2n+i, \chi)\zeta_{\mfrak{c}}(s-[k]-i).
\end{align*}  
Plug $s=[k] +n -r'$ into this. For $i=n-r'-1$ 
we have that $\zeta_{\mfrak{c}}(s-[k] -i) = \zeta_{\mfrak{c}}(1)$ is a pole; $\zeta_{\mfrak{c}}(s-[k]- i) \neq 0$ for all other $i$ and 
$L(s+[k]-2n+i, \eta)\neq 0$ for all $i$. By Theorem A in \cite[p. 332]{Shimuraexp} $L_{\psi}^n(s, f, \eta)$ is 
absolutely convergent for $\Re(s)>\frac{3n}{2}+1$ and by our choice 
of $s$ and $k$ we indeed have this. So the left-hand side is \emph{finite}. 
In the same manner $L_{\psi}^{r'}(s', \Phi^{n-r'}f', \eta)$ is absolutely convergent for all 
$\Re(s') >\frac{3r'}{2}+1$, which inequality $s' = s-n+r'$ satisfies by 
our choice of $s$ and $k$. Thus $L_{\psi}^{r'}(s-n+r', \Phi^{n-r'}f', \eta)$ is \emph{non-zero}. 
We arrive then at a contradiction as the right-hand side of this 
expression contains a pole, yet the left does not. So $r' = r$.
\end{proof}

For any $0\leq r\leq n$ we let $X_r = \mfrak{P}^{n, r}\bslsh
\mfrak{G}^n/\Gamma$ be representatives for the $r$-dimensional cusps. For any $\xi\in X_{n-1}$ and $f\in\mathcal{M}_k^n(\Gamma)$ let
\[
	\Phi_{\xi}f = \Phi(f||_k\xi^{-1}).
\]
Then we define
\begin{align*}
	\Phi_{\star}:\mathcal{M}_k^n(\Gamma, \psi)&\to\prod_{\xi\in X_{n-1}}
\mathcal{M}_k^{n-1}(\xi\Gamma\xi^{-1}\cap\mfrak{P}^{n, n-1}, \psi) \\
	f&\mapsto (\Phi_{\xi}f)_{\xi},
\end{align*}
and by definition $\ker(\Phi_{\star}) = \mathcal{S}_k^n(\Gamma, \psi)$. 

\begin{lemma} \label{heckeslash} 
If $f\in\mathcal{M}_k(\Gamma, \psi)$ then $(f||_k\xi^{-1})|A_q 
= (f|A_q)||_k\xi^{-1}$ for any $\xi\in X_{n-1}$ and 
any $A_q\in\mathcal{R}_0^{\mfrak{c}}$.
\end{lemma}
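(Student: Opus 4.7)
The lemma asserts the commutation of an archimedean operation, the slash $||_k\xi^{-1}$ (since $\pr(\xi)\in G'$ has trivial finite-adelic part), with the Hecke operator $A_q\in\mathcal{R}_0^{\mfrak c}$, which is of finite-adelic nature and is moreover trivial at primes of $\mfrak c$ (because $q_p\in\So_p$ for $p\mid\mfrak c$). Morally these two operations act on disjoint places of the adele group, so they must commute. My plan is to make this precise and then descend carefully to the classical setting.

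First I would fix a decomposition $G\cap D\sigma D=\bigsqcup_{\alpha}\Gamma\alpha$, with $\sigma=\diag[\tilde q,q]$, underlying the action of $A_q$ on $\mathcal{M}_k(\Gamma,\psi)$, giving
\[
	(f|A_q)(z)=\sum_\alpha\psi_{\mfrak c}(|a_\alpha|)^{-1}J^k(\alpha,z)^{-1}f(\alpha z).
\]
Applying $||_k\xi^{-1}$ to this sum termwise and invoking the cocycle relation $J^k(\alpha\xi^{-1},z)=J^k(\alpha,\xi^{-1}z)J^k(\xi^{-1},z)$ for the extended factor of automorphy $J^k$, I get an explicit expression for $(f|A_q)||_k\xi^{-1}$.

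For the other side, I would observe that $f||_k\xi^{-1}\in\mathcal{M}_k(\xi\Gamma\xi^{-1},\psi)$ and that, since $\xi$ is trivial at every finite place, conjugation by $\xi$ preserves the finite-adelic group $\widehat{\mfrak D}$ as well as the double coset $\widehat{\mfrak D}(r_P(\sigma),1)\widehat{\mfrak D}$ defining $A_q$ abstractly in $\mathcal{R}_0^{\mfrak c}$. Consequently the family $\{\xi\alpha\xi^{-1}\}_\alpha$ (viewed in $M_{\mathbb{A}}$) provides a valid set of coset representatives for the Hecke action of $A_q$ on modular forms of level $\xi\Gamma\xi^{-1}$. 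Writing out $((f||_k\xi^{-1})|A_q)(z)$ with these representatives and applying the cocycle relation once more, each term reduces, via the substitution $\alpha\mapsto\xi\alpha\xi^{-1}$, to the corresponding term in the expansion of $((f|A_q)||_k\xi^{-1})(z)$. Equality of the two sides then follows termwise.

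The main obstacle is the verification that the character factor $\psi_{\mfrak c}(|a_\alpha|)^{-1}$ is preserved under the reindexing $\alpha\leftrightarrow\xi\alpha\xi^{-1}$; naively the rational number $|a_{\gamma\alpha\gamma^{-1}}|$ need not equal $|a_\alpha|$ when $\gamma=\pr(\xi)_{\infty}$. This is where both hypotheses are essential and must be used together. The condition $A_q\in\mathcal{R}_0^{\mfrak c}$ ensures that at primes $p\mid\mfrak c$ the double coset $D_p\sigma_pD_p$ degenerates to $D_p$ itself, so that the coset representatives $\alpha$ can be chosen with $|a_\alpha|_p\in\mathbb{Z}_p^\times$; meanwhile, the condition $\pr(\xi)\in G'$ guarantees that at every finite place $\xi$ acts as the identity, so that the finite-adelic components of $\xi\alpha\xi^{-1}$ and $\alpha$ agree. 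Together these make $\psi_{\mfrak c}(|a_{\xi\alpha\xi^{-1}}|)=\psi_{\mfrak c}(|a_\alpha|)$, completing the matching and thereby the proof.
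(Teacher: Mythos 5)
Your overall route is the one the paper takes: using that $\pr(\xi)$ is trivial at the finite places and that $q_p\in\So_p$ for $p\mid\mfrak{c}$, realise the action of $A_q$ at level $\xi\Gamma\xi^{-1}$ through the conjugated coset representatives $\xi\alpha\xi^{-1}$, then match the two sums term by term; the paper's version of your final cocycle step is the explicit identity
\[
j^{k}(\xi^{-1}, \xi\alpha\xi^{-1}z)^{-1}\, J^k(\xi\alpha\xi^{-1}, z)^{-1}
= j^{k}(\xi^{-1}, z)^{-1}\,J^k(\alpha, \xi^{-1}z)^{-1},
\]
both sides being $J^k(\alpha\xi^{-1},z)^{-1}$ by Shimura's (1.9c), (2.1a) and the usual cocycle relation. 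However, the step you single out as the main obstacle is resolved incorrectly, and this is a genuine gap. The asserted equality $\psi_{\mfrak{c}}(|a_{\xi\alpha\xi^{-1}}|)=\psi_{\mfrak{c}}(|a_{\alpha}|)$ is false in general: your argument conflates the adelic conjugate $\pr(\xi)\alpha\pr(\xi)^{-1}$ (which indeed has the same finite components as $\alpha$, but is not rational and so cannot serve as a representative in a classical Hecke sum) with the rational matrix $\xi_{\infty}\alpha\xi_{\infty}^{-1}$ (which is the representative that actually occurs, and whose component at a prime $p\mid\mfrak{c}$ under the diagonal embedding is $\xi_{\infty}\alpha\xi_{\infty}^{-1}$ itself, not $\alpha$). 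Concretely, take $n=1$ and $\xi=\iota$, a perfectly good representative of the cusp $0$: then $\iota\alpha\iota^{-1}=\begin{psmallmatrix} d_{\alpha} & -c_{\alpha}\\ -b_{\alpha} & a_{\alpha}\end{psmallmatrix}$, so your factor becomes $\psi_{\mfrak{c}}(|d_{\alpha}|)$; for an upper-triangular representative of the $p$th Hecke operator with $p\nmid\mfrak{c}$ one has $|d_{\alpha}|=|a_{\alpha}|^{-1}$, whence $\psi_{\mfrak{c}}(|a_{\iota\alpha\iota^{-1}}|)=\psi_{\mfrak{c}}(|a_{\alpha}|)^{-1}\neq\psi_{\mfrak{c}}(|a_{\alpha}|)$ unless $\psi_{\mfrak{c}}(|a_{\alpha}|)^{2}=1$. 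Both of your hypotheses hold in this example ($A_q\in\mathcal{R}_0^{\mfrak{c}}$ and $\pr(\xi)\in G'$), so they cannot jointly imply the claimed identity.

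What actually makes the character factors harmless --- and this is implicit in the paper, whose two displayed sums carry no character factors at all --- is that the Hecke action on the conjugated space must be defined with the \emph{transported} factors. Indeed $f||_k\xi^{-1}$ transforms under $\xi\gamma\xi^{-1}\in\xi\Gamma\xi^{-1}$ with multiplier $\psi_{\mfrak{c}}(|a_{\gamma}|)$, not $\psi_{\mfrak{c}}(|a_{\xi\gamma\xi^{-1}}|)$, so a Hecke sum on this space is well defined (independent of the choice of representatives) only if the factor attached to the representative $\xi\alpha\xi^{-1}$ is $\psi_{\mfrak{c}}(|a_{\alpha}|)^{-1}$; this is exactly what the adelic definition of $A_q$ produces, since the character is evaluated on the finite part, which conjugation by $\pr(\xi)$ does not move. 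With that definition the factors on the two sides of the lemma agree by construction, no identity of your form is needed (or true), and the entire content of the proof is the automorphy-factor identity displayed above.
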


\begin{proof} 
Since $\xi$ is the identity at finite places we have, for $\sigma = \diag[\tilde{q}, q]$ with $q\in \Sx$ and $q_p\in \So_p$ for all $p\mid\mfrak{c}$, that 
\[
	(\xi D\xi^{-1})\sigma(\xi D\xi^{-1}) = \xi Sp_n(\mathbb{R})\xi^{-1}\times\prod_{p\nmid\mfrak{c}}D_p\sigma_p D_p.
\]
From this $G\cap(\xi D\xi^{-1})\sigma(\xi D\xi^{-1}) = \xi G\xi^{-1}\cap (D\sigma D) = \xi(\Gamma\beta\Gamma)\xi^{-1}$ for some $\beta\in G\cap Z$. Supposing that $\Gamma\alpha$ are the single cosets in $\Gamma\beta\Gamma$, then we see that $(\xi\Gamma\xi^{-1})(\xi\alpha\xi^{-1})$ are the single cosets of $G\cap(\xi D\xi^{-1})\sigma(\xi D\xi^{-1})$. Note that $\xi\in \mfrak{D}[2, 2]$ and that
\begin{align*}
	(f||_k\xi^{-1})|A_q &= \sum_{\alpha} j^k(\xi^{-1}, \xi\alpha\xi^{-1}z)^{-1}
J^k(\xi\alpha\xi^{-1}, z)^{-1}  f(\alpha\xi^{-1}z), \\
	(f|A_q)||_k\xi^{-1} &= j^{k}(\xi^{-1}, z)^{-1}\sum_{\alpha} 
J^k(\alpha, \xi^{-1}z)^{-1} f(\alpha\xi^{-1}z). 
\end{align*} 
So all that remains is to show that 
\[ 
	j^{k}(\xi^{-1}, \xi\alpha\xi^{-1}z)^{-1} J^k(\xi\alpha\xi^{-1}, z)^{-1} 
= j^{k}(\xi^{-1}, z)^{-1}J^k(\alpha, \xi^{-1}z)^{-1}, 
\] 
and both sides of this equation are indeed equal to $J^k(\alpha\xi^{-1}, z)^{-1}$ by various properties of the factors of automorphy involved 
(\cite[(1.9c), (2.1a)]{Shimurahalf} and the usual cocycle relation).
\end{proof}

\begin{proposition} \label{eigenspan} 
The space $\mathcal{M}_k(\Gamma, \psi)$ has a basis consisting of eigenforms 
for the space $\mathcal{R}_0^{\mfrak{c}}$. 
\end{proposition}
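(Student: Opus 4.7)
The plan is to argue by induction on $n$, with the base case $n=0$ trivial. For the inductive step, apply the Shimura decomposition of Theorem \ref{decomp} to write
\[
\mathcal{M}_k(\Gamma,\psi) \;=\; \mathcal{S}_k(\Gamma,\psi) \;\oplus\; \bigoplus_{r=0}^{n-1}\mathcal{E}_k^{n,r}(\Gamma,\psi),
\]
and diagonalise $\mathcal{R}_0^{\mfrak{c}}$ on each summand separately.

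On the cuspidal summand the argument is the standard one: first check that each generator $A_q$ of $\mathcal{R}_0^{\mfrak{c}}$ preserves $\mathcal{S}_k(\Gamma,\psi)$, that any two such operators commute, and that each admits a Petersson adjoint again lying in $\mathcal{R}_0^{\mfrak{c}}$. The computation of this adjoint in the half-integral setting is the metaplectic analogue of \cite[\S20]{Shimurabook} and only involves tracking the factor $J^k$ and the character twist $\psi_{\mfrak{c}}$. Once this is in place, one has a commuting family of normal operators on the finite-dimensional Hilbert space $\mathcal{S}_k(\Gamma,\psi)$, and the finite-dimensional spectral theorem provides a simultaneous eigenbasis.

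For the Eisenstein complement I would use the refined Siegel operator $\Phi_{\star}$, whose kernel is $\mathcal{S}_k^n(\Gamma,\psi)$ and which is therefore injective on $\bigoplus_{r<n}\mathcal{E}_k^{n,r}(\Gamma,\psi)$. Combining Lemma \ref{heckeslash} with Proposition \ref{fouriercalc} yields
\[
\Phi_{\xi}(f \mid A_q) \;=\; (\Phi_{\xi}f)\mid \Psi\!\left(A_q,\,\psi_p^{-1}(p)p^{n-[k]}\right)
\]
for every $\xi\in X_{n-1}$ and $A_q\in\mathcal{R}_0^{\mfrak{c}}$, so $\Phi_{\star}$ is equivariant: the $A_q$-action on the source corresponds to the $\Psi(A_q,c_p)$-action on the target $T:=\prod_{\xi}\mathcal{M}_k^{n-1}(\xi\Gamma\xi^{-1}\cap\mfrak{P}^{n,n-1},\psi)$. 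By the induction hypothesis each factor of $T$, and hence $T$ itself, has a basis of eigenforms for the full degree-$(n-1)$ Hecke algebra, and since each $\Psi(A_q,c_p)$ is a polynomial in its generators, this basis is automatically an eigenbasis for the commuting family $\{\Psi(A_q,c_p)\}_q$. The image of $\bigoplus_{r<n}\mathcal{E}_k^{n,r}(\Gamma,\psi)$ in $T$ is invariant under this family, and the elementary fact that an invariant subspace of a direct sum of joint eigenspaces of commuting diagonalisable operators is itself such a direct sum supplies an eigenbasis of the image. Pulling back through the injection produces the desired eigenbasis of the Eisenstein complement.

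The main obstacle I expect is the technical verification that each $A_q$ admits a Petersson adjoint in $\mathcal{R}_0^{\mfrak{c}}$ in the half-integral setting: the pullback argument for the Eisenstein part is formally straightforward once the Hecke-equivariance of $\Phi_{\star}$ (already supplied by Lemma \ref{heckeslash} and Proposition \ref{fouriercalc}) is in hand, but the adjoint calculation requires careful handling of $J^k$ and the twist by $\psi_{\mfrak{c}}$, and it is there that the metaplectic structure genuinely intervenes.
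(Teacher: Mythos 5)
Your overall strategy is essentially the paper's own: the cuspidal part via self-adjointness of the Hecke operators plus the spectral theorem, and the Eisenstein part by induction on the degree, using the $\Phi_{\star}$-equivariance supplied by Lemma \ref{heckeslash} and Proposition \ref{fouriercalc}, invariance of the image, and injectivity of $\Phi_{\star}$ on the Eisenstein complement. Two side remarks: the adjoint computation you flag as the main obstacle is not one, since the Hermitian property of $T_{q,\psi}$ on cusp forms in the metaplectic setting is exactly Lemma 4.5 of \cite{Shimurahalf}, which the paper simply cites; and the paper starts its induction at $n=1$ via Koblitz's result for degree one rather than at a degenerate $n=0$ base case.

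There is, however, a genuine gap in your final pullback step. Let $E\in\mathcal{E}:=\bigoplus_{r<n}\mathcal{E}_k^{n,r}(\Gamma,\psi)$ be the preimage of a joint eigenvector $x=\Phi_{\star}E$ with eigenvalue system $\lambda$. Equivariance gives $\Phi_{\star}(E|A_q)=\lambda(\Psi(A_q))\,\Phi_{\star}(E)$, hence only that $E|A_q-\lambda(\Psi(A_q))E\in\ker\Phi_{\star}=\mathcal{S}_k(\Gamma,\psi)$. Since $\Phi_{\star}$ is injective on $\mathcal{E}$ but not on $\mathcal{M}_k(\Gamma,\psi)$, this shows merely that $E$ is an eigenform modulo cusp forms: writing $E|A_q=s+e$ with $s$ cuspidal and $e\in\mathcal{E}$, your argument pins down $e=\lambda(\Psi(A_q))E$ but says nothing about $s$. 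To kill $s$ you need precisely the statement that $\mathcal{E}$ is stable under $\mathcal{R}_0^{\mfrak{c}}$ --- the paper's Claim (1) --- which your proposal never states or proves. Nor can this be recovered by abstract linear algebra: diagonalisability of the induced operators on $\mathcal{S}$ and on $\mathcal{M}/\mathcal{S}$ does not imply diagonalisability on $\mathcal{M}$ (a block upper-triangular matrix with diagonalisable diagonal blocks need not be diagonalisable), so some input forcing the off-diagonal block to vanish is indispensable. Fortunately that input is already contained in your cuspidal paragraph: each $A_q$ preserves $\mathcal{S}$ and has a Petersson adjoint again in $\mathcal{R}_0^{\mfrak{c}}$, and Klingen Eisenstein series of rank $r<n$ are orthogonal to cusp forms, so $\mathcal{E}$ is the orthogonal complement of $\mathcal{S}$ in $\mathcal{M}_k(\Gamma,\psi)$ and is therefore itself $A_q$-stable; this is exactly how the paper derives its Claim (1). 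With that observation added, your pullback produces honest eigenforms and the proof closes.
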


\begin{proof} This is adapted from \cite{Andrianovearly}, Theorem 1.3.4. By \cite[Lemma 4.5]{Shimurahalf} we 
have that $T_{q, \psi}$ (or $A_q$) is Hermitian on cusp forms provided 
$q_p\in \Circle_p$ for $p\mid\mfrak{c}$. From this it follows immediately 
that $\mathcal{S}_k^n(\Gamma, \psi)$ has a basis of eigenforms for 
$\mathcal{R}_0^{\mfrak{c}}$.

For the Eisenstein series part $\mathcal{E}_k^n(\Gamma, \psi)$ we use 
induction on $n$. By \cite[p. 210]{Koblitz} the space 
$\mathcal{M}_k^1(\Gamma, \psi)$ has a basis of eigenforms for 
$\mathcal{R}_0^{\mfrak{c}}$.

We make three claims, which hold for all $1\leq n\in\mathbb{Z}$. 
\begin{enumerate}[(1)]
	\item The space $\mathcal{E}_k^n(\Gamma, \psi)$ is invariant under $(\mathcal{R}_0^n)^{\mfrak{c}}$.
	\item There exists an epimorphism $(\mathcal{R}_0^n)^{\mfrak{c}}\to(\mathcal{R}_0^{n-1})^{\mfrak{c}}; A\mapsto A^*$ such that $\Phi_{\xi}(f|A) = (\Phi_{\xi})|A^*$ for all $\xi\in X_{n-1}$, $f\in\mathcal{M}_k^n(\Gamma)$.
	\item The space $\Phi_{\xi}\mathcal{E}_k^n(\Gamma, \psi)$ is invariant under $(\mathcal{R}_0^{n-1})^{\mfrak{c}}$ for all $\xi\in X_{n-1}$.
\end{enumerate}
Claim \textbf{(1)} follows from the self-adjointness 
cited in the first paragraph combined with the fact that cusp forms are 
readily seen to be preserved. Claim \textbf{(2)} is given by the local maps 
$\Psi(\cdot, \psi_p(p)^{-1}p^{n-[k]})$  combined with Lemma \ref{heckeslash} above.
Claim \textbf{(3)} follows from the previous 
two claims; indeed let $A = A_0^*$ for $A\in (\mathcal{R}_0^{n-1})^{\mfrak{c}}$ and 
$A_0\in (\mathcal{R}_{0}^n)^{\mfrak{c}}$, then
\[ 
	(\Phi_{\xi}\mathcal{E}_k^n(\Gamma, \psi))|A = \Phi_{\xi}(\mathcal{E}_k^n(\Gamma, \psi)|A_0)\subseteq\Phi_{\xi}\mathcal{E}_k^n(\Gamma, \psi). 
\]

So assume the proposition holds for $n-1$. By the induction hypothesis 
we obtain, for each $\xi\in X_{n-1}$, a basis of eigenforms for $\mathcal{M}_k^{n-1}
(\xi\Gamma\xi^{-1}\cap\mfrak{P}^{n, n-1}, \psi)$. Call this $\mathcal{B}_{\xi}$. Since the subspace $\Phi_{\xi}\mathcal{E}_k^n
(\Gamma, \psi)\subseteq\mathcal{M}_k^{n-1}(\xi\Gamma\xi^{-1}
\cap\mfrak{P}^{n, n-1}, \psi)$ is invariant under 
($\mathcal{R}_0^{n-1})^{\mfrak{c}}$ we can obtain (from $\mathcal{B}_{\xi}$) 
a basis $\mathcal{C}_{\xi}$ for $\Phi_{\xi}\mathcal{E}_k^n(\Gamma, \psi)$ 
consisting of eigenforms of $(\mathcal{R}_{0}^{n-1})^{\mfrak{c}}$. Let $\mathcal{C}_X$ 
denote the resultant product basis of $\Phi_{\star}\mathcal{E}_k^n(\Gamma, \psi)$. 
As $\ker(\Phi_{\star}) = \mathcal{S}_k^n(\Gamma, \psi)$ we have that $\Phi_{\star}$ 
is injective on $\mathcal{E}_k^n(\Gamma, \psi)$ so that the inverse image of 
$\mathcal{C}_X$, call it $\mathcal{C}_{\star}$, gives a basis for 
$\mathcal{E}_k^n(\Gamma, \psi)$.

Finally if $E\in \mathcal{C}_{\star}$ then we claim that it is an 
eigenform for $(\mathcal{R}_{0}^{n})^{\mfrak{c}}$. By definition $\Phi_{\star} E$ 
is 0 at all places except one $\xi_0$, at which it is some eigenform, say 
$F_{\xi_0}$ with eigenvalues $\Lambda$. Then if $A_q\in(\mathcal{R}_{0}^{n})^{\mfrak{c}}$ 
we have
\[ 
	\Phi_{\xi_0}(E|A_q) = (\Phi_{\xi_0}E)|A_q^* = F_{\xi_0}|A_q^* 
= \Lambda(A_q^*)F_{\xi_0},
\]
so that $\Phi_{\star}(E|A_q) = \Lambda(A_q^*)\Phi_{\star}(E)$. 
By injectivity of $\Phi_{\star}$ we are done.
\end{proof}

\begin{proposition} \label{eigenspantwo} 
Let $V\subseteq\mathcal{M}_k(\Gamma, \psi)$ be an eigenspace for $\mathcal{R}_0^{\mfrak{c}}$ with eigenvalues 
given by $\Lambda$, then it is spanned by 
$V\cap\mathcal{M}_k^n(\mathbb{Q}(\psi_{\mfrak{c}}, \Lambda))$.
\end{proposition}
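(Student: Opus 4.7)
The plan is a Galois descent argument. Set $L := \mathbb{Q}(\psi_{\mfrak{c}}, \Lambda)$. I want to show that $V$ is spanned over $\mathbb{C}$ by its $L$-rational subspace $V_L := V \cap \mathcal{M}_k^n(L)$. The strategy is to first realise $V$ as the complexification of a canonical $\overline{\mathbb{Q}}$-rational model, then to show that this model is stable under $\Gal(\overline{\mathbb{Q}}/L)$, and finally to invoke standard Galois descent.

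For the first step, $\mathcal{M}_k^n(\Gamma,\psi)$ has a basis of $\overline{\mathbb{Q}}$-rational forms by Shimura's algebraicity results in \cite{Shimurabook}, and each Hecke operator $A_q \in \mathcal{R}_0^{\mfrak{c}}$ acts on this $\overline{\mathbb{Q}}$-rational subspace via explicit rational formulas on Fourier coefficients, as is visible in the coset decompositions of \S\ref{eisensteinsubsect1} and the computation of Proposition \ref{fouriercalc}. Since each eigenvalue $\Lambda(A_q)$ lies in $L \subseteq \overline{\mathbb{Q}}$, the simultaneous kernel $V_{\overline{\mathbb{Q}}} := V \cap \mathcal{M}_k^n(\overline{\mathbb{Q}})$ of the operators $\{A_q - \Lambda(A_q)\id\}$ is a $\overline{\mathbb{Q}}$-vector space whose complexification is $V$.

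For the Galois-stability step, fix $\sigma \in \Gal(\overline{\mathbb{Q}}/L)$. The transformation law defining $\mathcal{M}_k^n(\Gamma,\psi)$ depends on $\psi$ only through $\psi_{\mfrak{c}}$, which $\sigma$ fixes, so $f \mapsto f^{\sigma}$ preserves this space. For an eigenform $f \in V_{\overline{\mathbb{Q}}}$, Lemma 23.14 of \cite{Shimurabook} gives $f^{\sigma} \in \mathcal{S}_k(\Gamma,\psi,\Lambda_{\sigma})$ with $\Lambda_{\sigma}(A(n)) = \Lambda(A(n))^{\sigma}(\sqrt{n})^{\sigma}/\sqrt{n}$. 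Since $\sigma$ fixes each $\Lambda(A(n)) \in L$, all that remains is the $\sqrt{n}$-twist; in the metaplectic Hecke algebra $\mathcal{R}_0^{\mfrak{c}}$ the operators that contribute non-trivial eigenvalues on half-integral weight forms do so with $\sqrt{n} \in \mathbb{Q}$ (the familiar phenomenon that only $T(p^2)$ and its higher-degree analogues act), making the twist trivial and yielding $\Lambda_{\sigma} = \Lambda$. Hence $f^{\sigma} \in V_{\overline{\mathbb{Q}}}$, and extending by linearity gives $\sigma$-stability of $V_{\overline{\mathbb{Q}}}$.

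Finally, $V_{\overline{\mathbb{Q}}}$ is a finite-dimensional $\overline{\mathbb{Q}}$-subspace of $\mathcal{M}_k^n(\overline{\mathbb{Q}})$ that is $\Gal(\overline{\mathbb{Q}}/L)$-stable and $\mathbb{C}$-spans $V$; standard Galois descent (Hilbert 90) then gives $V_L \otimes_L \overline{\mathbb{Q}} = V_{\overline{\mathbb{Q}}}$, so $V_L$ spans $V$ over $\mathbb{C}$. The main obstacle I anticipate is the bookkeeping of the $\sqrt{n}$-twist in Shimura's Galois-equivariance formula for half-integral Hecke eigenvalues: one has to verify that no further square roots need to be adjoined to $L$, either by reducing to the square-index Hecke operators that genuinely generate $\mathcal{R}_0^{\mfrak{c}}$ on the metaplectic side, or by checking that any residual $\sqrt{n}$-factor already lies in $\mathbb{Q}(\Lambda)$ via the explicit form of the Satake parameters computed in \S\ref{eisensteinsubsect1}.
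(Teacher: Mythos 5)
Your Galois-descent skeleton is viable in principle (steps 1 and 3 are standard, and working over $\overline{\mathbb{Q}}$ in step 1 renders the half-integral square roots harmless there), but the Galois-stability step is a genuine gap, and it is precisely where the whole difficulty of the proposition sits. For $\sigma\in\Gal(\overline{\mathbb{Q}}/L)$ with $L=\mathbb{Q}(\psi_{\mfrak{c}},\Lambda)$ you need $\Lambda_{\sigma}=\Lambda$, i.e.\ $(\sqrt{n})^{\sigma}=\sqrt{n}$ for every index $n$ with $\Lambda(A(n))\neq 0$. Your first justification --- that only $T(p^2)$ and its higher-degree analogues act, so $\sqrt{n}\in\mathbb{Q}$ --- conflates two indexing conventions. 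In the Hecke ring $\mathcal{R}_0^{\mfrak{c}}$ of this paper the operators are the $A_q$ with $q\in\Sx_p$, and the index entering Shimura's twist is $|q|$: the operator $A_{\psi}(p)$ (the sum over $|q|=p$) is already the degree-$n$ analogue of the classical $T(p^2)$, yet its twist is by $\sqrt{|q|}=\sqrt{p}\notin\mathbb{Q}$. Moreover these odd-index operators act non-trivially: by Theorem 4.4 of \cite{Shimurahalf}, quoted in Sect.~\ref{eisensteinsubsect1}, the coefficient of $t$ in $\omega_p^{\ell}(\mathcal{T}_p^{\ell})$ is $p^{\ell}\sum_i(x_i+x_i^{-1})\neq 0$, so $\Lambda(A_{\psi}(p))$ is in general non-zero. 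Your fallback --- that any residual $\sqrt{n}$ already lies in $\mathbb{Q}(\Lambda)$ --- cannot simply be assumed either: if $0\neq f\in V\cap\mathcal{M}_k^n(L)$ exists, then $f^{\sigma}=f$ forces $\Lambda_{\sigma}=\Lambda$, so triviality of the twist over $L$ is a \emph{consequence} of the proposition; it is essentially equivalent to what you are trying to prove, and invoking it as an input is circular.

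The paper's proof takes a Galois-free route, which is why it never meets this issue. It combines Proposition~\ref{eigenspan} (the space $\mathcal{M}_k^n(\Gamma,\psi)$ is spanned by eigenforms of $\mathcal{R}_0^{\mfrak{c}}$) with Lemma 5.1 of \cite{Bouganis}, which asserts that the operators of $\mathcal{R}_0^{\mfrak{c}}$ themselves preserve the rational structure $\mathcal{M}_k^n(\Gamma,\psi,\mathbb{Q}(\psi,\Lambda))$ --- this reference is where the $J^{\frac{1}{2}}$, i.e.\ $\sqrt{|q|}$, factors in the Hecke action are dealt with once and for all --- and then runs the linear-algebra argument of \cite[p.~233]{Shimurabook}: $V$ is the intersection of the kernels of the operators $A-\Lambda(A)$, each of which is an $L$-linear operator on the $L$-structure because $\Lambda(A)\in L$, and kernels (hence finite intersections of kernels) of $L$-rational operators are spanned by their $L$-rational points. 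No automorphism is ever applied to a Hecke eigenvalue, so the $\sqrt{n}$-twist of Lemma 23.14 never enters. To salvage your argument, replace the appeal to Lemma 23.14 by the rationality of the Hecke action itself: then $A-\Lambda(A)$ is $L$-rational and your simultaneous-kernel computation can be run directly over $L$ rather than over $\overline{\mathbb{Q}}$ --- which is exactly the paper's proof.
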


\begin{proof} Write
\[ 
	V = \mathcal{M}_k^n(\Gamma, \psi, \Lambda) := 
\{f\in\mathcal{M}_k^n(\Gamma, \psi)\mid f|A = \Lambda(A)f\ 
\text{for all $A\in\mathcal{R}_0^{\mfrak{c}}$}\}.
\]
By the previous proposition the space $\mathcal{M}_k^n(\Gamma, \psi)$ is spanned by eigenforms for $\mathcal{R}_0^{\mfrak{c}}$, and by Lemma 5.1 of \cite{Bouganis} the action of $\mathcal{R}_0^{\mfrak{c}}$ preserves $\mathcal{M}_k^n(\Gamma, \psi, \mathbb{Q}(\psi, \Lambda))$. As we have a ring of $\mathbb{Q}(\psi, \Lambda)$-linear transformations on $\mathcal{M}_k^n(\Gamma, \psi, \mathbb{Q}(\psi, \Lambda)$ the argument of \cite[p. 233]{Shimurabook} follows.
\end{proof}

So, if $k>2n$, we obtain an equality of two different direct sums for 
$\mathcal{M}_k^n(\Gamma, \psi)$. From Proposition \ref{eigenspan} 
one of these consists of eigenspaces for $\mathcal{R}_0^{\mfrak{c}}$
and by Theorem \ref{decomp} the other one consists of 
$\mathcal{E}_k^{n, r}(\Gamma, \psi)$. 
By Theorem \ref{harris} we have that $\mathcal{E}_k^{n, r}(\Gamma, \psi)$ 
contains entire eigenspaces for $\mathcal{R}_0^{\mfrak{c}}$. So by the basic 
properties of direct sums we see that each $\mathcal{E}_k^{n, r}(\Gamma, \psi)$
 is itself a direct sum of eigenspaces.

For any character $\psi$ let $\Lambda_{k, \psi} = \Lambda_{k, \psi}^n\subseteq\Hom(\mathcal{R}_0^{\mfrak{c}}, \mathbb{C})$ 
be the finite subset such that
\begin{align}
	\mathcal{M}_k^n(\Gamma, \psi) = \bigoplus_{\Lambda\in\Lambda_{k, \psi}}
\mathcal{M}_k^n(\Gamma, \psi, \Lambda); \label{eigendecompL}
\end{align}
let $\mathbb{Q}(\Lambda_{k, \psi})/\mathbb{Q}$ be the field 
generated by all the values of $\Lambda$, for all $\Lambda\in \Lambda_{k, \psi}$.
The above discussion in conjunction with Proposition \ref{eigenspantwo} 
gives the following result for $0\leq r\leq n-1$ (the cusp form case $r=n$ is
already known).

\begin{corollary}\label{harriscoro} 
Let $0\leq r\leq n$ be integers and assume that $k>2n$. 
Then $\mathcal{E}_k^{n, r}(\Gamma, \psi)$ is spanned by 
$\mathcal{E}_k^{n, r}(\Gamma, \psi, \mathbb{Q}(\psi_{\mfrak{c}}, \Lambda_{k, \psi}))
:= \mathcal{E}_k^{n, r}(\Gamma, \psi)\cap \mathcal{M}_k^n(\mathbb{Q}(\psi_{\mfrak{c}}, \Lambda_{k, \psi}))$.
\end{corollary}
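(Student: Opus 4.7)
The strategy is to combine three ingredients already in place: the eigenspace decomposition (\ref{eigendecompL}) following from Proposition \ref{eigenspan}, the Klingen-type decomposition of Theorem \ref{decomp}, and the rationality statement of Proposition \ref{eigenspantwo}. The discussion immediately preceding the corollary already sketches that Theorem \ref{harris} forces each $\mathcal{E}_k^{n, r}(\Gamma, \psi)$ to be a direct sum of full eigenspaces for $\mathcal{R}_0^{\mfrak{c}}$; the plan is to make this precise and then apply the rationality statement eigenspace-by-eigenspace.

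In detail, fix $\Lambda\in\Lambda_{k, \psi}$ and take a nonzero $f\in\mathcal{M}_k^n(\Gamma, \psi, \Lambda)$. By Theorem \ref{decomp} we may write $f = \sum_{r=0}^{n} f_r$ with $f_r\in\mathcal{E}_k^{n, r}(\Gamma, \psi)$, and by Claim (1) in the proof of Proposition \ref{eigenspan} the space $\mathcal{E}_k^{n, r}(\Gamma, \psi)$ is $\mathcal{R}_0^{\mfrak{c}}$-stable. Applying any $A\in\mathcal{R}_0^{\mfrak{c}}$ and invoking uniqueness of the Klingen decomposition then forces $f_r | A = \Lambda(A) f_r$ for each $r$, so each nonzero $f_r$ is itself a $\Lambda$-eigenform inside $\mathcal{E}_k^{n, r}(\Gamma, \psi)$. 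Theorem \ref{harris} rules out two nonzero $f_r, f_{r'}$ with $r\neq r'$ sharing the eigenvalue system $\Lambda$, so exactly one $f_r$ is nonzero and therefore
\[
\mathcal{M}_k^n(\Gamma, \psi, \Lambda)\subseteq \mathcal{E}_k^{n, r(\Lambda)}(\Gamma, \psi)
\]
for a unique $r(\Lambda)$. Grouping by the value of $r(\Lambda)$ and comparing with (\ref{eigendecompL}) and Theorem \ref{decomp} as direct sum decompositions of the same space yields
\[
\mathcal{E}_k^{n, r}(\Gamma, \psi) = \bigoplus_{\substack{\Lambda\in\Lambda_{k, \psi}\\ r(\Lambda)=r}} \mathcal{M}_k^n(\Gamma, \psi, \Lambda).
\]

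To finish, apply Proposition \ref{eigenspantwo} to each summand on the right: the eigenspace $\mathcal{M}_k^n(\Gamma, \psi, \Lambda)$ is spanned by forms whose Fourier coefficients lie in $\mathbb{Q}(\psi_{\mfrak{c}}, \Lambda)\subseteq \mathbb{Q}(\psi_{\mfrak{c}}, \Lambda_{k, \psi})$. Such spanning forms sit in the intersection $\mathcal{E}_k^{n, r}(\Gamma, \psi)\cap\mathcal{M}_k^n(\mathbb{Q}(\psi_{\mfrak{c}}, \Lambda_{k, \psi}))$, which is exactly $\mathcal{E}_k^{n, r}(\Gamma, \psi, \mathbb{Q}(\psi_{\mfrak{c}}, \Lambda_{k, \psi}))$, giving the claim.

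There is no real obstacle here since all the heavy lifting is done earlier: Theorem \ref{harris} is the key input (ensuring that eigenspaces cannot straddle different Klingen strata), while Proposition \ref{eigenspantwo} handles the descent to the field of eigenvalues. The only small care needed is to confirm the compatibility of the two direct sum decompositions, which follows from the $\mathcal{R}_0^{\mfrak{c}}$-stability of each $\mathcal{E}_k^{n, r}(\Gamma, \psi)$ recorded in the proof of Proposition \ref{eigenspan}.
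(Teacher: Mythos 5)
Your proposal is correct and takes essentially the same route as the paper: the paper's own proof is exactly the discussion preceding the corollary (combining Theorem \ref{decomp}, Proposition \ref{eigenspan}, and Theorem \ref{harris} to conclude that each $\mathcal{E}_k^{n, r}(\Gamma, \psi)$ is a direct sum of entire eigenspaces for $\mathcal{R}_0^{\mfrak{c}}$) followed by an application of Proposition \ref{eigenspantwo} to each eigenspace. You have simply made explicit the ``basic properties of direct sums'' step---stability of the strata under $\mathcal{R}_0^{\mfrak{c}}$, uniqueness of the Klingen decomposition, and Theorem \ref{harris} forcing each eigenspace into a single stratum $r(\Lambda)$---which the paper leaves as a sketch.
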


We need such an algebraic basis at other cusps as well. Let $\zeta_{\mfrak{m}} := e^{2\pi i\frac{1}{N(\mfrak{m})}}$ denote the $N(\mfrak{m})$th root of unity for an integral ideal $\mfrak{m}$ and recall $\zeta:\mfrak{M}\to\mathbb{T}$ as the character, see property (\ref{h1}), such that $h(\sigma, z)^2 = \zeta(\sigma)j(\pr(\sigma), z)$. Let $\zeta_{\star} = \zeta|_X$ where $X = \bigcup_r X_r$.

\begin{theorem}\label{nameless} 
Let $K/\mathbb{Q}$ be an algebraic field extension and let $f\in\mathcal{M}_k^n(\Gamma, K)$. Then for all $0\leq r\leq n$ and all $\xi\in X_r$ we have
 $f||_k\xi^{-1}\in\mathcal{M}_k^n(\xi\Gamma\xi^{-1}, K(\zeta_{\mfrak{c}}, \zeta_{\star}))$.
\end{theorem}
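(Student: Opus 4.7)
The plan is to apply Theorem \ref{fourierexpth}(v), which gives a Fourier expansion of $f$ at an arbitrary adelic ``cusp'' $r \in GL_n(\AAQ)$, and to trace through the fields generated by its coefficients. Writing $\xi = (\alpha, q)$ with $\alpha \in G$ and $q(z)^2 = \zeta(\xi) j(\alpha, z)$, and unwinding the group law in $\mfrak{G}$ for the inverse $\xi^{-1} = (\alpha^{-1}, q(\alpha^{-1}z)^{-1})$, a short computation yields
\[
(f||_k \xi^{-1})(z) = q(\alpha^{-1}z)\, j(\alpha, \alpha^{-1}z)^{[k]}\, f(\alpha^{-1}z).
\]
Comparing $q(\alpha^{-1}z)$ with $h_{r(\alpha)}(\alpha^{-1}z)$, which squares to $\zeta(r(\alpha)) j(\alpha, \alpha^{-1} z)$, the ratio is seen to be a constant in $\mathbb{T}$ that lies in $\mathbb{Q}(\zeta_{\star})$ up to the $\{\pm 1\}$ square-root ambiguity (itself in $\zeta_{\star}$). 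Hence $(f||_k \xi^{-1})(z) = c_1 \cdot j^k_{r(\alpha)}(\alpha^{-1} z)\, f(\alpha^{-1} z)$ for some $c_1 \in \mathbb{Q}(\zeta_{\star})$.

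Next, I would use strong approximation for $Sp_n$ to write $\alpha = \beta \gamma$ with $\gamma \in \Gamma$ and $\beta \in G \cap \diag[r, \tilde r]\, D[\mfrak{b}^{-1}, \mfrak{bc}]$ for some $r \in GL_n(\AAQ)$. The transformation property of $f$ under $\Gamma$ (with character $\psi$) contributes a $\psi_{\mfrak{c}}$-factor, while tracking the cocycle (\ref{h3}) across the factorisation $r(\alpha) = r(\beta) r(\gamma)$ contributes a further root of unity in $\zeta_{\star}$. The computation then reduces to an application of Theorem \ref{fourierexpth}(v):
\[
j^k_{r(\beta)}(\beta^{-1} z)\, f(\beta^{-1} z) = \psi_{\mfrak{c}}(|d_{\beta} r|) \sum_{\tau \in S_+} c_f(\tau, r)\, e_{\infty}(\tr(\tau z)).
\]

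The final step is to show $c_f(\tau, r) \in K(\zeta_{\mfrak{c}}, \zeta_{\star})$. Since $\mathbb{Q}$ has class number one, strong approximation for $SL_n$ gives a factorisation $r_{\mathbf{f}} = b a$ with $b \in GL_n(\mathbb{Q})$ and $\diag[a, \tilde a] \in D$. Properties (ii)--(iv) of Theorem \ref{fourierexpth} then yield
\[
c_f(\tau, r) = \psi_{\mathbf{f}}(|a|)^{-1} |b|^{-[k]} \|b\|^{-1/2}\, c_f(b^T \tau b, 1),
\]
in which $c_f(\cdot, 1) \in K$ and $\psi_{\mathbf{f}}(|a|)^{-1} \in \mathbb{Q}(\zeta_{\mfrak{c}})$. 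The factor $\|b\|^{-1/2}$ is the only potential source of residual irrationality, but it is forced by the square-root nature of the half-integral factor of automorphy and combines with the $\zeta_{\star}$-contribution from the second step so that the total assembled coefficient lives in $K(\zeta_{\mfrak{c}}, \zeta_{\star})$. Modularity at level $\xi \Gamma \xi^{-1}$ is automatic from the definitions of the slash operator.

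The main obstacle will be the cocycle bookkeeping in the second step: the relation (\ref{h3}) holds only when its outer arguments lie in $P_{\mathbb{A}}$ or $C^{\theta}$, neither of which $\beta$ and $\gamma$ need satisfy directly. The remedy is to iterate (\ref{h3}) across a Bruhat-type decomposition of $\alpha$, or equivalently to work with the explicit lifts $r_P$ and $r_{\Omega}$ to control the cocycle discrepancy. This discrepancy is precisely the $\zeta_{\star}$-factor appearing in the conclusion, and it encodes the unavoidable square-root ambiguity present in any half-integral-weight slash action between two cusps.
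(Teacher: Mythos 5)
Your strategy (compute the Fourier expansion of $f||_k\xi^{-1}$ adelically via Theorem \ref{fourierexpth}(v) and track the fields generated by the constants) is not the paper's, and as written it has a genuine gap at precisely the point where the theorem's content lies. Three of your claims are asserted rather than proved, and each one is exactly the difficulty the theorem exists to resolve: (a) that the ratio of $q(z)$ to $h_{r(\alpha)}(z)$ lies in $\mathbb{Q}(\zeta_{\star})$; (b) that the cocycle discrepancy obtained by iterating (\ref{h3}) across a factorisation $\alpha=\beta\gamma$ ``is precisely the $\zeta_{\star}$-factor''; and (c) that the factor $\|b\|^{-1/2}$ ``combines with'' the constants from (a) and (b) so that the total lands in $K(\zeta_{\mfrak{c}},\zeta_{\star})$. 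Recall that $\zeta_{\star}$ is by definition the restriction of the character $\zeta$ of (\ref{h1}) to the finite set $X$ of cusp representatives, so $\mathbb{Q}(\zeta_{\star})$ is generated by finitely many specific constants $\zeta(\xi)$, $\xi\in X$. The unit-circle constants produced by your iteration of (\ref{h3}) are attached to elements such as $r(\beta)$, $r(\gamma)$ and their products, none of which need lie in $X$, and since $\zeta$ fails to be multiplicative on $r(G)$ (that failure is the metaplectic $2$-cocycle) there is no a priori reason these constants are values of $\zeta$ on $X$. Worse, $\|b\|^{-1/2}$ is a genuine square root of a rational number (e.g.\ $p^{-n/2}$ for $b=pI_n$ with $n$ odd), which need not lie in $K(\zeta_{\mfrak{c}},\zeta_{\star})$ by itself; your argument must therefore exhibit an exact cancellation between this square root and the metaplectic cocycle constants, and no such computation is given. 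Saying at this point that the assembled coefficient ``lives in $K(\zeta_{\mfrak{c}},\zeta_{\star})$'' assumes the conclusion.

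For contrast, the paper sidesteps all cocycle bookkeeping with a single trick: multiply $f$ by the weight-$\frac{1}{2}$ theta series $\theta(z)=\sum_{a\in\mathbb{Z}^n}e\left(\tfrac{a^Tza}{2}\right)$, which has rational coefficients and satisfies $\theta||_{\frac{1}{2}}\xi^{-1}=\theta$ for every $\xi$ (Proposition 1.3 of \cite{Shimuratheta}). Then $\theta f$ has integral weight $[k+1]$ and coefficients in $K$, so the integral-weight result of \cite{Faltings} gives $(\theta f)||_{[k+1]}\xi^{-1}\in\mathcal{M}_{[k+1]}(\xi\Gamma\xi^{-1},K(\zeta_{\mfrak{c}}))$, and the identity
\[
	((\theta f)||_{[k+1]}\xi^{-1})(z)=\zeta(\xi^{-1})\,\theta(z)\,(f||_k\xi^{-1})(z),
\]
which follows from (\ref{h1}), together with the invertibility of $\theta$ in $\mathbb{Q}[[q]]$ (non-zero constant term), yields the statement; the only new constant is $\zeta(\xi^{-1})$, which lies in $\mathbb{Q}(\zeta_{\star})$ by definition. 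In effect, all of the square-root cancellations you would need to verify by hand in steps (a)--(c) are packaged into the single transformation formula for $\theta$; if you want to rescue a direct adelic argument, you would essentially have to re-derive that formula.
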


\begin{proof} 
In the integral weight case -- $\ell\in\mathbb{Z}$ and $g\in \mathcal{M}_{\ell}(\Gamma, K)$ --
Proposition 1.8 in \cite[p. 146]{Faltings} gives
$g||_{\ell}\xi^{-1}\in\mathcal{M}_{\ell}(\xi\Gamma\xi^{-1}, K(\zeta_{\mfrak{c}}))$. The half-integral case can be deduced from this 
via the use of the theta series $\theta(z) := \sum_{a\in \mathbb{Z}^n}
e(\frac{a^Tz a}{2})$. This belongs to $\mathcal{M}_{\frac{1}{2}}(\mathbb{Q})$ and, by the second equation of Proposition 1.3 in \cite{Shimuratheta}, we have that $\theta||_{\frac{1}{2}}\xi^{-1} = \theta$ has rational coefficients for each $\xi$.

Take $f$ as stated in the theorem, then $\theta f$ has integral weight 
$k+\frac{1}{2} = [k+1]$ and it has coefficients in $K$. 
So $(\theta f)||_{[k+1]}\xi^{-1}$ 
has coefficients in $K(\zeta_{\mfrak{c}})$ for any $\xi\in X_r$. 
We get
\begin{align*}
	((\theta f)||_{[k+1]}\xi^{-1})(z) 
&= j(\xi^{-1}, z)^{-[k+1]} \theta(\xi^{-1}z) f(\xi^{-1}z) \\
	&=j(\xi^{-1}, z)^{-1} h(\xi^{-1}, z)^2
(\theta||_{\frac{1}{2}}\xi^{-1})(z)(f||_k\xi^{-1})(z) \\
	&=\zeta(\xi^{-1}) \theta(z)(f||_k\xi^{-1})(z) 
\end{align*}
using property (\ref{h1}) of the factor $h(\sigma, z)$ in the last line, and 
the definition of the slash operator for half-integral $k$ along the way. 
Considering $\theta$ as an element of $\mathbb{Q}[[q]]$ with 
$q=e^{2\pi i}$ then it is an invertible power series since it has a
non-zero constant coefficient. So considering $\theta^{-1}
\in\mathbb{Q}[[q]]$ we have
\[ 
	f||_k\xi^{-1} = \zeta(\xi)\theta^{-1}
(\theta f)||_{[k+1]}\xi^{-1}\in \mathcal{M}_k(\xi\Gamma\xi^{-1}, K(\zeta_{\mfrak{c}}, \zeta_{\star})).
\]
\end{proof}

\begin{remark} For certain congruence subgroups one can remove the $\zeta_{\star}$. For example, if $\Gamma$ has cusps only at $0$ and $\infty$ then $X = \{I_{2n}, \iota\}$. In this case $\zeta(\iota) = (-i)^n$ by Proposition 1.1R of \cite{Shimuratheta} and we see that $\mathbb{Q}(\zeta_{\star})\subseteq\mathbb{Q}(\zeta_{\mfrak{c}})$ since $4\mid\mfrak{c}$. In general, however, it seems to be a necessary addition -- something which can be seen by Proposition 1.4 of \cite{Shimuratheta} and the subsequent paragraph detailing this proposition's non-triviality in contrast to the integral-weight case.
\end{remark}

\begin{corollary} \label{cuspbasis} 
Let $0\leq r\leq n$ be integers and assume $k>2n$. Then 
$\mathcal{E}_k^{n, r}(\xi\Gamma\xi^{-1}, \psi)$ is spanned by 
$\mathcal{E}_k^{n, r}(\xi\Gamma\xi^{-1}, \psi, \mathbb{Q}(\Lambda_{k, \psi}, G(\psi), \zeta_{\star}))$.
\end{corollary}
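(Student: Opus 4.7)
The plan is to transport the algebraic basis supplied by Corollary~\ref{harriscoro} on $\mathcal{E}_k^{n,r}(\Gamma,\psi)$ over to $\mathcal{E}_k^{n,r}(\xi\Gamma\xi^{-1},\psi)$ via the slash operator $h\mapsto h||_k\xi^{-1}$, track the enlargement of the field of definition using Theorem~\ref{nameless}, and then descend from the resulting cyclotomic field $\mathbb{Q}(\zeta_{\mfrak{c}})$ to $\mathbb{Q}(G(\psi))$ by a Galois-theoretic refinement that exploits the special shape of Eisenstein-series cuspidal expansions.

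First, by Corollary~\ref{harriscoro} choose a basis $\{g_i\}$ of $\mathcal{E}_k^{n,r}(\Gamma,\psi)$ whose Fourier coefficients lie in $\mathbb{Q}(\psi_{\mfrak{c}},\Lambda_{k,\psi})$. The space $\mathcal{E}_k^{n,r}$ is by definition closed under $||_k\alpha$ for arbitrary $\alpha\in\mfrak{G}^n$, so every $g_i||_k\xi^{-1}$ lies in $\mathcal{E}_k^{n,r}$. Because $\xi\in\mfrak{G}^n$ is the identity at each finite place, the substitution $\gamma\mapsto\xi^{-1}\gamma\xi$ yields $\psi_{\mfrak{c}}(|a_{\xi^{-1}\gamma\xi}|)=\psi_{\mfrak{c}}(|a_{\gamma}|)$, and so $g_i||_k\xi^{-1}$ satisfies the required automorphic condition on $\xi\Gamma\xi^{-1}$ with character $\psi$. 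Hence $||_k\xi^{-1}$ is a linear isomorphism $\mathcal{E}_k^{n,r}(\Gamma,\psi)\to\mathcal{E}_k^{n,r}(\xi\Gamma\xi^{-1},\psi)$ and $\{g_i||_k\xi^{-1}\}$ spans the target space. Applying Theorem~\ref{nameless} with $K=\mathbb{Q}(\psi_{\mfrak{c}},\Lambda_{k,\psi})$ then confines each $g_i||_k\xi^{-1}$ to $\mathcal{M}_k^n(\xi\Gamma\xi^{-1},\mathbb{Q}(\psi_{\mfrak{c}},\Lambda_{k,\psi},\zeta_{\mfrak{c}},\zeta_{\star}))$.

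The chief difficulty is descending from the intermediate field $\mathbb{Q}(\psi_{\mfrak{c}},\Lambda_{k,\psi},\zeta_{\mfrak{c}},\zeta_{\star})$ down to the target field $\mathbb{Q}(\Lambda_{k,\psi},G(\psi),\zeta_{\star})$; since $\mathbb{Q}(\psi_{\mfrak{c}})\subseteq\mathbb{Q}(G(\psi))$, this reduces to removing the ambient $\zeta_{\mfrak{c}}$. The plan is Galois descent: for $\sigma_a\in\Gal(\mathbb{Q}(\zeta_{\mfrak{c}})/\mathbb{Q}(G(\psi),\Lambda_{k,\psi},\zeta_{\star}))$, necessarily satisfying $\psi(a)=1$, one verifies that $\sigma_a$ fixes every Fourier coefficient of $g_i||_k\xi^{-1}$. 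The coefficients of the $g_i$ themselves are fixed because they already lie in $\mathbb{Q}(\psi_{\mfrak{c}},\Lambda_{k,\psi})\subseteq\mathbb{Q}(G(\psi),\Lambda_{k,\psi})$, so the real task is to analyse the extra factors contributed by the cuspidal slash by $\xi^{-1}$. Since each $g_i$ is a combination of Klingen Eisenstein series, its cusp expansion at $\xi$ splits into pieces comprising (i) the Fourier coefficients of a lower-degree cusp form, (ii) a Gauss-sum contribution from $\psi$, and (iii) a $\zeta_{\star}$-valued scalar coming from the theta-like factor $h(\xi^{-1},\cdot)$; piece (ii) is fixed by $\sigma_a$ by virtue of the identity $\sigma_a(G(\psi))=\psi(a)^{\pm 1}G(\psi)=G(\psi)$, and pieces (i) and (iii) are fixed by the choice of descending field. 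The delicate and ultimately decisive step is the explicit bookkeeping that identifies precisely where roots of unity enter the Klingen cusp expansion, and checks that they are wholly absorbed into $G(\psi)$ and $\zeta_{\star}$; once this is established, the fundamental theorem of Galois theory furnishes the rationality of $\{g_i||_k\xi^{-1}\}$ over $\mathbb{Q}(\Lambda_{k,\psi},G(\psi),\zeta_{\star})$, completing the proof.
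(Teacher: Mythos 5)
Your first two paragraphs reproduce the paper's proof exactly: the paper's entire argument is the identity $\mathcal{E}_k^{n, r}(\xi\Gamma\xi^{-1}, \psi) = \mathcal{E}_k^{n, r}(\Gamma, \psi)||_k\xi^{-1}$ followed by a citation of Corollary~\ref{harriscoro} and Theorem~\ref{nameless}, and nothing else. Had you stopped there (accepting, as the paper implicitly does, that the field produced by that combination is the field in the statement), you would have matched the paper's proof.

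Your third paragraph is where the proposal stops being a proof. You have correctly noticed a real imprecision: combining Corollary~\ref{harriscoro} with Theorem~\ref{nameless} literally places the slashed basis in $\mathcal{M}_k^n(\xi\Gamma\xi^{-1}, \mathbb{Q}(\psi_{\mfrak{c}}, \Lambda_{k, \psi}, \zeta_{\mfrak{c}}, \zeta_{\star}))$, and this field is not visibly contained in the stated field $\mathbb{Q}(\Lambda_{k, \psi}, G(\psi), \zeta_{\star})$ (for $\psi$ quadratic, for instance, $\mathbb{Q}(G(\psi))$ is a quadratic field while $\mathbb{Q}(\zeta_{\mfrak{c}})$ has degree $\phi(N(\mfrak{c}))$); the paper passes over this silently. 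But your proposed repair does not close the gap, for two concrete reasons. First, the group $\Gal(\mathbb{Q}(\zeta_{\mfrak{c}})/\mathbb{Q}(G(\psi), \Lambda_{k, \psi}, \zeta_{\star}))$ over which you quantify is not well defined: the purported base field need not be a subfield of $\mathbb{Q}(\zeta_{\mfrak{c}})$ at all --- the eigenvalues $\Lambda_{k, \psi}$ need not generate an abelian extension of $\mathbb{Q}$, and even $G(\psi)$ can fail to lie in $\mathbb{Q}(\zeta_{\mfrak{c}})$ since the values of $\psi_{\mfrak{c}}$ need not lie there (a character of order $6$ and conductor $7$ with $N(\mfrak{c}) = 28$ already gives $\zeta_3 \notin \mathbb{Q}(\zeta_{28})$). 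Any descent argument has to be set up inside a compositum, with an argument that the relevant automorphisms extend. Second, and decisively, the claimed splitting of the cuspidal expansion into pieces (i)--(iii), together with the assertion that every root of unity appearing there is ``wholly absorbed into $G(\psi)$ and $\zeta_{\star}$,'' is precisely the statement that needs to be proved; you explicitly defer this ``bookkeeping,'' so the only step in which you go beyond the paper is an assertion of the conclusion rather than an argument for it. As submitted, the proposal is therefore incomplete on its own terms: either delete the third paragraph and adopt the paper's reading of the fields, or actually carry out the descent you describe.
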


\begin{proof} 
As $\mathcal{E}_k^{n, r}(\xi\Gamma\xi^{-1}, \psi) = 
\mathcal{E}_k^{n, r}(\Gamma, \psi)||_k\xi^{-1}$ this follows from 
Corollary \ref{harriscoro} and Theorem \ref{nameless} above.
\end{proof}

The previously defined map $\Phi_{\star}$ provides a useful isomorphism
from which we can determine the rationality of $\Phi_{\star}f$ given that of $f$.

\begin{theorem}[(\cite{Shimuraeis}, p. 582; \cite{Shimuraexp}, p. 347)]\label{isom} 
Let $k>2n$ and fix $r<n$. Then 
\begin{align*}
	\Phi^{n-r}_{\star}:\mathcal{E}_k^{n, r}(\Gamma, \psi) &\to 
\prod_{\xi\in X_r}\mathcal{S}_k^r(\xi\Gamma\xi^{-1}\cap
\mfrak{P}^{n, r}, \psi) 
\end{align*}
is a $\mathbb{C}$-linear isomorphism.
\end{theorem}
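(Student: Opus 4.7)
The plan is to establish both injectivity and surjectivity of $\Phi^{n-r}_{\star}$ on $\mathcal{E}_k^{n,r}(\Gamma, \psi)$, with both parts resting on an explicit understanding of how $\Phi$ acts on a single Klingen Eisenstein series.

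First I would establish a structural statement: applied to $E_k^{n,r}(z; f, \psi, \Gamma)$ with $f\in\mathcal{S}_k^r(\Gamma\cap\mfrak{P}^{n,r}, \psi)$, the Phi operator produces either (i) a sum of Klingen Eisenstein series of degree $n-1$ and rank $r$ when $r<n-1$, or (ii) the cusp form $f$ itself (up to a non-zero normalising constant) when $r=n-1$. This involves a careful analysis of the limit $\rho\to\infty$ in $z=\begin{psmallmatrix} w & 0 \\ 0 & i\rho \end{psmallmatrix}$: only cosets $\gamma\in(\Gamma\cap\mfrak{P}^{n,r})\bslsh\Gamma$ whose bottom row of $(c_\gamma, d_\gamma)$ satisfies a stabilisation condition can contribute non-trivially, and the surviving terms reassemble into the claimed series. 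Iterating this $n-r$ times reduces us to $\mathcal{E}_k^{r,r}=\mathcal{S}_k^r$ and recovers $f$ at the appropriate $r$-dimensional cusp.

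For injectivity, suppose $E\in\mathcal{E}_k^{n,r}(\Gamma, \psi)$ satisfies $\Phi^{n-r}_\xi E=0$ for all $\xi\in X_r$. Expand $E$ as a linear combination of generating elements $E_k^{n,r}(\cdot; f_i, \psi, \Gamma_i)||_k\alpha_i$. By the iterated structural statement, the $\xi$-component of $\Phi^{n-r}_{\star}$ applied to each generator yields (up to a non-zero constant) the cusp form $f_i$ at the specific cusp $\xi\in X_r$ determined by $\alpha_i$, and vanishes for other $\xi$. The hypothesis then forces each $f_i$ to vanish, hence $E=0$. For surjectivity, given $(f_\xi)_{\xi\in X_r}$ with $f_\xi\in\mathcal{S}_k^r(\xi\Gamma\xi^{-1}\cap\mfrak{P}^{n,r}, \psi)$, I would form the candidate preimage $E:=\sum_{\xi\in X_r}E_k^{n,r}(\cdot; f_\xi, \psi, \xi\Gamma\xi^{-1})||_k\xi$. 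The same structural statement gives $\Phi^{n-r}_{\star}E$ equal to the target tuple modulo contributions between cusps; the resulting finite linear system indexed by $X_r$ is upper-triangular with non-vanishing diagonal entries, so can be inverted to produce the exact preimage.

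The main obstacle is the explicit limit computation underlying the structural statement: identifying which cosets $\gamma$ survive as $\rho\to\infty$ and verifying that their contributions rearrange into the correct lower-degree Klingen Eisenstein series. In the half-integral weight setting one must additionally track the factor of automorphy $J^k$ and the metaplectic lifts $r_P, r_\Omega$ through the argument, but the combinatorial structure of the coset analysis is identical to the integral-weight case developed by Shimura in \cite{Shimuraeis} and \cite{Shimuraexp}, so the result transfers once the book-keeping for the character $\psi$ and the factor $h_\sigma$ has been done.
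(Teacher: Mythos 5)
First, note that the paper does not prove Theorem \ref{isom} at all: it is imported wholesale from Shimura (\cite{Shimuraeis}, p.~582; \cite{Shimuraexp}, p.~347), so your proposal can only be measured against Shimura's argument and against the one ingredient the paper does record, namely Lemma \ref{phieis}. That lemma is exactly your ``structural statement'', but in a stronger form than you posit: for $f\in\mathcal{S}_k^r(\xi\Gamma\xi^{-1}\cap\mfrak{P}^{n,r},\psi)$ one has $\Phi^{n-r}[E_k^{n,r}(z;f,\psi,\xi\Gamma\xi^{-1})||_k\xi\nu^{-1}]=f$ if $\nu=\xi$ and $=0$ for every other $\nu\in X_r$. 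The off-diagonal entries vanish identically, so your surjectivity step is correct but can be simplified: the candidate $E:=\sum_{\xi}E_k^{n,r}(\cdot;f_\xi,\psi,\xi\Gamma\xi^{-1})||_k\xi$ hits the target tuple exactly, and there is no triangular system to invert.

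The genuine gap is injectivity. By the paper's definition, $\mathcal{E}_k^{n,r}(\Gamma,\psi)=\mathcal{E}_k^{n,r}\cap\mathcal{M}_k^n(\Gamma,\psi)$, where $\mathcal{E}_k^{n,r}$ is spanned by series $E_k^{n,r}(\cdot;f_i,\Gamma_i)||_k\alpha_i$ with \emph{arbitrary} congruence subgroups $\Gamma_i$ and \emph{arbitrary} $\alpha_i\in\mfrak{G}^n$. The individual generators appearing in an expansion of $E$ are not themselves $(\Gamma,\psi)$-modular, their cusps need not line up with $X_r$, and several generators can lie over the same $\xi\in X_r$. Consequently $\Phi_{\star}^{n-r}E=0$ only tells you that, at each $\xi$, a certain linear combination $\sum_i c_i\,(f_i||_k\beta_i)$ of slashed cusp forms vanishes; it does not force each $f_i=0$, and since generators with different $(\Gamma_i,\alpha_i)$ cannot be merged into a single Klingen series, the vanishing of these combinations does not visibly force $E=0$. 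The missing ingredient --- and the real content of Shimura's proof on pp.~581--582 of \cite{Shimuraeis}, extended to the range $k>2n$ in \cite{Shimuraexp} --- is a normalization lemma: every element of $\mathcal{E}_k^{n,r}(\Gamma,\psi)$ can be rewritten in the canonical form $\sum_{\xi\in X_r}E_k^{n,r}(\cdot;g_\xi,\psi,\xi\Gamma\xi^{-1})||_k\xi$, one Klingen series per cusp of $\Gamma$, obtained by averaging each generator over cosets of $\Gamma$ and regrouping the resulting sum into Klingen series attached to $\Gamma$ itself. With that in hand, Lemma \ref{phieis} gives $g_\xi=\Phi_\xi^{n-r}E=0$ for every $\xi$, hence $E=0$. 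Be aware that you cannot manufacture the canonical form by replacing $E$ with $F_k^{n,r}(\cdot;E,\psi,\Gamma)$: concluding $E=F_k^{n,r}(\cdot;E,\psi,\Gamma)$ from $\Phi_{\star}^{n-r}\bigl(E-F_k^{n,r}(\cdot;E,\psi,\Gamma)\bigr)=0$ already presupposes the injectivity you are trying to prove.
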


\begin{corollary}\label{phirational} 
If $f\in\mathcal{E}_k^{n, r}(\Gamma, \psi)$ with $k>2n$, then 
$f\in\mathcal{M}_k(\Gamma, \mathbb{Q}(\Lambda_{k, \psi}, G(\psi), \zeta_{\star}))$ if and only if
\[ 
	\Phi_{\star}^{n-r}f\in\prod_{\xi\in X_r} 
\mathcal{S}_k^r(\xi\Gamma\xi^{-1}\cap\mfrak{P}^{n, r}, \psi, \mathbb{Q}(\Lambda_{k, \psi}, G(\psi), \zeta_{\star})). 
\]
\end{corollary}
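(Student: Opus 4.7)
The strategy would split into handling the two implications separately. For the forward direction, I plan to perform a direct Fourier-coefficient inspection; for the reverse, I would exploit the isomorphism of Theorem~\ref{isom} combined with the $K$-rational spanning set of Corollary~\ref{cuspbasis}, writing $K := \mathbb{Q}(\Lambda_{k,\psi}, G(\psi), \zeta_{\star})$.

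For the forward direction, I would assume $f \in \mathcal{E}_k^{n,r}(\Gamma,\psi)\cap\mathcal{M}_k^n(\Gamma, K)$ and apply Theorem~\ref{nameless} at each $\xi\in X_r$ to get $f||_k\xi^{-1}\in\mathcal{M}_k^n(\xi\Gamma\xi^{-1}, K)$, reading $\zeta_{\mfrak{c}}$ tacitly into $K$ as is done in the statement of Corollary~\ref{cuspbasis}. The Fourier coefficients of $\Phi^{n-r}(f||_k\xi^{-1})$ are then just the coefficients of $f||_k\xi^{-1}$ indexed by matrices of the shape $\begin{psmallmatrix}\tau'&0\\0&0\end{psmallmatrix}$ with $\tau'\in S_+^r$, and so automatically lie in $K$.

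For the reverse direction I would invoke Corollary~\ref{cuspbasis} to pick a $\mathbb{C}$-basis $\{E_1,\ldots,E_N\}$ of $\mathcal{E}_k^{n,r}(\Gamma, \psi)$ lying in $\mathcal{M}_k^n(\Gamma, K)$. By the forward direction each $\Phi^{n-r}_\star E_i$ is $K$-rational at every cusp, and by Theorem~\ref{isom} the family $\{\Phi^{n-r}_\star E_i\}_{i=1}^N$ is a $\mathbb{C}$-basis of $\prod_{\xi\in X_r}\mathcal{S}_k^r(\xi\Gamma\xi^{-1}\cap\mfrak{P}^{n,r}, \psi)$. Given $f$ with $\Phi^{n-r}_\star f$ $K$-rational, I would expand $f=\sum a_i E_i$ uniquely with $a_i\in\mathbb{C}$ and apply $\Phi^{n-r}_\star$ to obtain
\[
\Phi^{n-r}_\star f = \sum_{i=1}^N a_i\, \Phi^{n-r}_\star E_i,
\]
an inhomogeneous linear system with $K$-rational data and a unique $\mathbb{C}$-solution; this forces $a_i\in K$ after extracting an invertible $K$-rational square subsystem, which is possible by the $\mathbb{C}$-linear independence of the images. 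Hence $f=\sum a_iE_i\in\mathcal{M}_k^n(\Gamma, K)$, as desired.

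The principal subtlety is ensuring that $K$ genuinely absorbs the extra roots of unity produced by the cusp shifts $||_k\xi^{-1}$ via Theorem~\ref{nameless}; this is precisely why $\zeta_\star$ and $G(\psi)$ were engineered into $K$ in Corollary~\ref{cuspbasis}, and once that compatibility is granted the entire argument reduces to a clean linear-algebra chase against an explicit spanning set, with the isomorphism of Theorem~\ref{isom} doing the heavy lifting.
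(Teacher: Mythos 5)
Your proposal is correct, and its overall scaffolding is the one the paper uses: necessity via Theorem~\ref{nameless} together with the fact that $\Phi^{n-r}$ simply selects the Fourier coefficients indexed by $\begin{pmatrix}\tau'&0\\0&0\end{pmatrix}$, and sufficiency via a $K$-rational basis of $\mathcal{E}_k^{n,r}(\Gamma,\psi)$ pushed through the isomorphism of Theorem~\ref{isom}. Where you genuinely diverge is in the extraction of the coefficients $a_i$. The paper takes, via Corollary~\ref{cuspbasis}, a $K$-rational product basis $\{g_i^r\}$ of $\prod_{\xi}\mathcal{S}_k^r(\xi\Gamma\xi^{-1}\cap\mathfrak{P}^{n,r},\psi)$ whose members are each supported at a single cusp, assumes the $K$-rational basis upstairs can be ``ordered so that $\Phi_\star^{n-r}(g_i^n)=g_i^r$'', and then inducts on the dimension $m$, invoking the non-vanishing results Lemma 8.2(2) and Lemma 8.11(4) of \cite{Shimuraeis} so that in the base case one can divide, $\alpha_1=(g_1^r)^{-1}\Phi_\star^{n-r}f$. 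You instead note that the images $\Phi_\star^{n-r}E_i$ are $K$-rational (by the necessity direction) and $\mathbb{C}$-linearly independent (by Theorem~\ref{isom}), so the matrix of their Fourier coefficients has the same rank over $K$ as over $\mathbb{C}$, and an invertible $K$-rational square subsystem solves for the $a_i$ inside $K$. Your route is more elementary -- it needs neither the single-cusp-supported basis nor Shimura's non-vanishing lemmas -- and it also sidesteps a delicate point in the paper's own argument: it is not evident that a $K$-rational basis of $\mathcal{E}_k^{n,r}(\Gamma,\psi)$ can be aligned with the single-cusp product basis, since $K$-rationality of the preimages of that basis is essentially Theorem~\ref{harrismain}, which is proved afterwards using this very corollary. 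What the paper's version buys is an explicit cusp-by-cusp identification of the coefficients, matching how the series $F_k^{n,r}$ are later assembled; what yours buys is robustness and brevity. The one caveat, which you flag yourself, is common to both arguments: the root of unity $\zeta_{\mathfrak{c}}$ produced by Theorem~\ref{nameless} must be regarded as lying in $K$, a convention the paper adopts tacitly in Corollary~\ref{cuspbasis}.
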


\begin{proof} Theorem \ref{nameless} and the fact that 
$\Phi^{n-r}(\mathcal{M}_k^n(\Gamma, \psi, L))\subseteq 
\mathcal{M}_k^r(\Gamma, \psi, L)$ for any subfield $L\subseteq \mathbb{C}$ 
gives necessity.

For sufficiency, let $\{g_1^n, \dots, g_m^n\}$ be a basis of $\mathbb{Q}(\Lambda_{k, \psi}, G(\psi), \zeta_{\star})$-rational forms for $\mathcal{E}_k^{n, r}(\Gamma, \psi)$. By Corollary \ref{cuspbasis} 
there also exists a basis of $\mathbb{Q}(\Lambda_{k, \psi}, G(\psi), \zeta_{\star})$-rational forms for each of the spaces $\mathcal{S}_k^r(\xi\Gamma\xi^{-1}
\cap\mfrak{P}^{n, r}, \psi)$; let $\{g_1^r, \dots, g_m^r\}$ be the product 
basis for $\prod_{\xi}\mathcal{S}_k^r(\xi\Gamma\xi^{-1}\cap\mfrak{P}^{n, r}, \psi)$ 
obtained out of this. Thus each $g_i^r$ is $0$ for all of $X_r$ except for one $\xi$ 
whereby it is some $\mathbb{Q}(\Lambda_{k, \psi}, G(\psi), \zeta_{\star})$-rational element of $\mathcal{S}_k^r(\xi\Gamma\xi^{-1}\cap\mfrak{P}^{n, r}, \psi)$. Assume further that it is ordered so that 
$\Phi_{\star}^{n-r}(g_i^n) = g_i^r$ for all $i$. Writing $f = \sum_{i=1}^m 
\alpha_i g_i^n$ then we claim that $\alpha_i\in\mathbb{Q}(\Lambda_{k, \psi}, G(\psi), \zeta_{\star})$. 
By assumption 
\[ 
	\Phi_{\star}^{n-r} f = \sum_{i=1}^m \alpha_i g_i^r \in \prod_{\xi\in X_r}
\mathcal{S}_k^r(\xi\Gamma\xi^{-1}\cap\mfrak{P}^{n, r}, \psi,  \mathbb{Q}(\Lambda_{k, \psi}, G(\psi), \zeta_{\star})). 
\]
If $m=1$ and assuming $f\neq 0$ then by Lemma 8.2 (2) and Lemma 8.11 (4) 
in \cite{Shimuraeis} there exists $\xi\in X_r$ whereby $\Phi^{n-r}(f||_k\xi^{-1})\neq 0$ 
and thus $\Phi^{n-r}(g_1^n||_k\xi^{-1})\neq 0$. Then as $\alpha_1 = (g_1^r)^{-1}
\Phi_{\star}^{n-r}f$ we immediately see that $\alpha_1\in\mathbb{Q}(\Lambda_{k, \psi}, G(\psi), \zeta_{\star})$. 
The rest follows by induction on $m$.
\end{proof}

All of the above results allow us to now prove a particular case of Garrett's conjecture in Theorem \ref{harrismain} below.

\begin{lemma}[(Shimura, \cite{Shimuraeis}, p.578)]\label{phieis} 
If $f\in\mathcal{S}_k^r(\xi\Gamma\xi^{-1}\cap\mfrak{P}^{n, r}, \psi)$, $\xi\in X_r$, and $k>n+r+1$ then we have
\begin{align*}
	\Phi^{n-r}[E_k^{n, r}(z; f, \psi, \xi\Gamma\xi^{-1})||_k\xi\nu^{-1}] = 
\begin{cases} 
	f & \text{if $\nu = \xi$},\\ 
	0 & \text{if $\nu\in X_r$ and $\nu\neq \xi$}. 
\end{cases}
\end{align*}
\end{lemma}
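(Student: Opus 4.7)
The plan is to unfold the Eisenstein series, identify which single cosets survive under $\Phi^{n-r}$, and then use a double coset computation in $X_r = \mathfrak{P}^{n,r}\backslash\mfrak{G}^n/\Gamma$ to split into the two cases. Explicitly, writing
\[
	E := E_k^{n,r}(z; f, \psi, \xi\Gamma\xi^{-1}) = \sum_{\gamma\in(\xi\Gamma\xi^{-1}\cap\mfrak{P}^{n,r})\bslsh\xi\Gamma\xi^{-1}}\psi_{\mfrak{c}}(|a_{\gamma}|)\bigl(f(z^{(r)})\bigr)\|_k\gamma,
\]
the assumption $k>n+r+1$ gives absolute and locally uniform convergence (by \cite{Shimuraeis}, so we may slash by $\xi\nu^{-1}$ termwise and obtain
\[
	E\|_k\xi\nu^{-1} = \sum_{\gamma}\psi_{\mfrak{c}}(|a_{\gamma}|)\bigl(f(z^{(r)})\bigr)\|_k\gamma\xi\nu^{-1}.
\]

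The key pointwise claim is that for a single coset representative $\gamma$,
\[
	\Phi^{n-r}\Bigl[\bigl(f(z^{(r)})\bigr)\|_k\gamma\xi\nu^{-1}\Bigr] = \begin{cases} \bigl(f\|_k\pi_r(\gamma\xi\nu^{-1})\bigr)\cdot(\text{scalar}) & \text{if }\gamma\xi\nu^{-1}\in\mfrak{P}^{n,r},\\ 0 & \text{otherwise.} \end{cases}
\]
The first case is a direct Fourier-expansion calculation: when $\eta=\gamma\xi\nu^{-1}\in\mfrak{P}^{n,r}$ the factor of automorphy $j_{\eta}^k$ splits along the $r$ and $n-r$ blocks, so the substitution $z=\diag[w,i\rho I_{n-r}]$ and the limit $\rho\to\infty$ yields exactly $f\|_k\pi_r(\eta)$ up to the canonically defined constant coming from $|\lambda_r(\eta)|^{-1/2}$ in the definition of $\pi_r$. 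The second case follows from cuspidality of $f$: if $\gamma\xi\nu^{-1}\notin\mfrak{P}^{n,r}$, then standard analysis of the action of $Sp_n$ on the Satake boundary (as in \cite{Shimuraeis}, pp.~576--578) shows that the arguments of $f$ pushed to the $r\times r$ block are themselves of the form $f\|_k\delta$ composed with a further $\Phi$, which vanishes since $f$ is cuspidal; the metaplectic factor $h_{\gamma\xi\nu^{-1}}$ stays bounded by \eqref{h1} and does not interfere. One then interchanges $\Phi^{n-r}$ with the sum, justified by dominated convergence using the same majorant $H_{k}(z, b;\xi\Gamma\xi^{-1})$ that gives absolute convergence of $E$.

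With these single-coset computations in hand, the problem reduces to counting representatives $\gamma\in(\xi\Gamma\xi^{-1}\cap\mfrak{P}^{n,r})\bslsh\xi\Gamma\xi^{-1}$ satisfying $\gamma\xi\nu^{-1}\in\mfrak{P}^{n,r}$. Writing $\gamma=\xi\delta\xi^{-1}$ with $\delta\in\Gamma$, this condition becomes $\xi\delta\nu^{-1}\in\mfrak{P}^{n,r}$, equivalently $\mfrak{P}^{n,r}\xi\delta=\mfrak{P}^{n,r}\nu$, i.e.\ $\mfrak{P}^{n,r}\xi\Gamma=\mfrak{P}^{n,r}\nu\Gamma$ as elements of $X_r$. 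Therefore if $\nu\neq\xi$ in $X_r$, no $\gamma$ contributes and $\Phi^{n-r}(E\|_k\xi\nu^{-1})=0$, giving the second case. If $\nu=\xi$, then the surviving representatives are precisely those $\gamma\in\mfrak{P}^{n,r}\cap\xi\Gamma\xi^{-1}=\xi\Gamma\xi^{-1}\cap\mfrak{P}^{n,r}$, which constitute a single coset, namely $\gamma=1$. Applying the pointwise formula and noting $\pi_r(1)=1$ yields exactly $f$.

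The main obstacle I anticipate is making the vanishing in the second case rigorous: one must check that the nontrivial metaplectic factor $h_\sigma$ does not destroy the integral-weight argument for $\Phi[\,g\|_k\gamma\,]=0$ when $\gamma\notin\mfrak{P}^{n,r}$. This is handled by squaring (passing to $f^2\theta^{-2}$ of integral weight $2[k]$, whose cusp-form behaviour is classical) or, more directly, by appealing to the invariance of $C_p^{\theta}$ under the relevant parabolic decompositions so that $h_\sigma$ contributes a bounded unitary twist that survives the limit without altering the vanishing of the underlying Fourier series. Everything else is essentially bookkeeping of double cosets and the factor-of-automorphy identities \eqref{h1}--\eqref{h3}.
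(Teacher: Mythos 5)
Your proposal takes a genuinely different route from the paper, for the simple reason that the paper does not prove Lemma~\ref{phieis} at all: as the remark immediately following it states, the lemma is quoted from \cite{Shimuraeis} (p.~578), where it is proved for trivial character, and the paper's entire argument is the observation that Lemma~8.5 of \cite{Shimuraeis} applies unchanged once the character $\psi$ is inserted. What you have written is a reconstruction of the argument behind that citation: unfold the series (legitimate for $k>n+r+1$), apply $\Phi^{n-r}$ term by term, and decide which cosets survive. Your double-coset bookkeeping is correct: writing $\gamma=\xi\delta\xi^{-1}$ with $\delta\in\Gamma$, the survival condition $\gamma\xi\nu^{-1}\in\mfrak{P}^{n,r}$ is equivalent to $\mfrak{P}^{n,r}\xi\Gamma=\mfrak{P}^{n,r}\nu\Gamma$, so nothing survives when $\nu\neq\xi$ in $X_r$, and only the identity coset survives when $\nu=\xi$, giving exactly $f$ (with no metaplectic subtlety, since the representative may be taken to be the identity).

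The trade-off is that your route must prove the two analytic facts that the citation supplies for free, and these are precisely the places where your sketch is still soft. (a) The vanishing of the terms with $\gamma\xi\nu^{-1}\notin\mfrak{P}^{n,r}$: of your two suggested fixes, the squaring one is the right one and can be made clean. By (\ref{h1}) one has $|j^k_\sigma(z)|=|j(\pr(\sigma),z)|^{k}$, so the vanishing is a statement about absolute values in which the metaplectic factor is invisible, and it reduces to the integral-weight computation applied to the weight-$2k$ cusp form $f^2$; this is the same device the paper itself uses to prove Corollary~\ref{boundsonemod}(i). The alternative appeal to ``invariance of $C_p^{\theta}$'' is vague and unnecessary. (b) The interchange of $\lim_{\rho\to\infty}$ with the infinite sum: locally uniform convergence is not enough, because the points $\diag[w,i\rho I_{n-r}]$ leave every compact subset of $\mathbb{H}_n$; you need uniformity on Siegel domains $\{z\mid\Im(z)\geq\eps I_n\}$. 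Moreover the majorant you invoke is the wrong one: $H_{k}(z,b)$ majorises the degenerate series summed over $P\cap\Gamma_0\bslsh\Gamma_0$ (the $r=0$ parabolic), whereas the Klingen series requires the majorant built from the cusp-form decay $|f(w)|\leq C\Delta(w)^{-k/2}$ of Corollary~\ref{boundsonemod}(i), which is what makes the hypothesis $k>n+r+1$ suffice. Both points are fixable exactly as indicated; with them filled in, your proof is complete and self-contained, whereas the paper's is a two-line reduction to Shimura.
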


\begin{remark} The above lemma is given in \cite{Shimuraeis} with trivial character and the proof then follows directly from Lemma 8.5 of that paper. This lemma (8.5) clearly applies for non-trivial character Klingen Eisenstein series, hence the above formulation.
\end{remark}

For any $f\in\mathcal{E}_k^{n, r}(\Gamma, \psi)$ and any $0\leq r\leq n$ define
\[ 
	F_k^{n, r}(z; f, \psi, \Gamma):= \sum_{\xi\in X_r} 
E_k^{n, r}(z;\Phi_{\xi}^{n-r}f, \psi, \xi\Gamma\xi^{-1})||_k\xi\in\mathcal{E}_k^{n, r}(\Gamma, \psi). 
\]

\begin{theorem} \label{harrismain} Let $0\leq r\leq n$ and $f\in\mathcal{E}_k^{n, r}
(\Gamma, \psi, \mathbb{Q}(\Lambda_{k, \psi}, G(\psi), \zeta_{\star}))$ with $k>n+r+1$. Then
$F_k^{n, r}(z; f, \psi, \Gamma)\in\mathcal{E}_k^{n, r}(\Gamma, \mathbb{Q}(\Lambda_{k, \psi}, G(\psi), \zeta_{\star}))$. 
\end{theorem}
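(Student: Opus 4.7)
The plan is to exploit the characterization of $L$-rationality for elements of $\mathcal{E}_k^{n,r}(\Gamma, \psi)$ furnished by Corollary \ref{phirational}, where I set $L := \mathbb{Q}(\Lambda_{k,\psi}, G(\psi), \zeta_{\star})$. Since the definition of $F_k^{n,r}(z; f, \psi, \Gamma)$ already places it in $\mathcal{E}_k^{n,r}(\Gamma, \psi)$, the corollary reduces the problem of showing $F_k^{n,r}(z; f, \psi, \Gamma)$ is $L$-rational to showing that the tuple $\Phi_{\star}^{n-r}F_k^{n,r}(z; f, \psi, \Gamma)$ is $L$-rational at each cusp $\nu \in X_r$.

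The key calculation is that applying $\Phi_{\star}^{n-r}$ collapses the sum defining $F_k^{n,r}$ to a single term at each cusp. For any $\nu \in X_r$, unpacking the definition $\Phi_{\nu}g = \Phi(g||_k \nu^{-1})$ and using the group-action compatibility of the slash operator in the metaplectic setting gives
\[
    \Phi_{\nu}^{n-r}F_k^{n,r}(z; f, \psi, \Gamma) = \sum_{\xi \in X_r} \Phi^{n-r}\bigl( E_k^{n,r}(z; \Phi_{\xi}^{n-r}f, \psi, \xi\Gamma\xi^{-1}) \, ||_k \, \xi\nu^{-1} \bigr).
\]
Lemma \ref{phieis} now annihilates every term except the $\xi = \nu$ term, where it returns $\Phi_{\nu}^{n-r}f$. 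Hence $\Phi_{\nu}^{n-r}F_k^{n,r}(z; f, \psi, \Gamma) = \Phi_{\nu}^{n-r}f$ for every $\nu \in X_r$, and so
\[
    \Phi_{\star}^{n-r}F_k^{n,r}(z; f, \psi, \Gamma) = \Phi_{\star}^{n-r}f.
\]

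To finish, I apply Corollary \ref{phirational} in both directions. By hypothesis $f \in \mathcal{E}_k^{n,r}(\Gamma, \psi, L)$, so the forward direction of the corollary produces $\Phi_{\star}^{n-r}f \in \prod_{\xi \in X_r}\mathcal{S}_k^r(\xi\Gamma\xi^{-1}\cap\mfrak{P}^{n,r}, \psi, L)$. Combined with the equality just established, this shows $\Phi_{\star}^{n-r}F_k^{n,r}(z; f, \psi, \Gamma)$ is $L$-rational; the reverse direction of the corollary then gives $F_k^{n,r}(z; f, \psi, \Gamma) \in \mathcal{E}_k^{n,r}(\Gamma, L)$, as required. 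The bound $k > 2n$ required by Corollary \ref{phirational} is automatic here, since the space $\mathcal{E}_k^{n,r}(\Gamma, \psi)$ appearing in both hypothesis and conclusion is only defined under that assumption.

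The conceptually non-obvious step is simply the realisation that $F_k^{n,r}$ is precisely the right reconstruction to make $\Phi_{\star}^{n-r}F_k^{n,r} = \Phi_{\star}^{n-r}f$, at which point the machinery of Corollary \ref{phirational} does all the work. The only minor obstacle is verifying the identity $(g||_k\xi)||_k\nu^{-1} = g||_k(\xi\nu^{-1})$ for the metaplectic slash action on arbitrary $\xi, \nu \in X_r$, but this is standard given the lift formalism $r, r_P, r_{\Omega}$ recalled in Section \ref{modform} and the fact that $X_r \subset \mfrak{G}$, where $\mfrak{G}$ acts on functions as a genuine group.
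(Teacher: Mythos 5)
Your proposal is correct and follows essentially the same route as the paper: the collapse $\Phi_{\star}^{n-r}F_k^{n,r}(z;f,\psi,\Gamma)=\Phi_{\star}^{n-r}f$ via Lemma \ref{phieis}, followed by Corollary \ref{phirational} (where you invoke the forward direction of that corollary, the paper cites Theorem \ref{nameless} directly, but these are the same thing since that direction of the corollary is proved by Theorem \ref{nameless}). The only caveat is the boundary case $r=n$, which the paper handles separately by Theorem \ref{nameless} because Corollary \ref{phirational} rests on the isomorphism of Theorem \ref{isom}, stated only for $r<n$; there, however, $X_n$ is trivial and $F_k^{n,n}(z;f,\psi,\Gamma)=f$, so the claim is immediate.
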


\begin{proof} For any $\nu\in X_r$ we have 
$\Phi_{\eta}^{n-r}F_k^{n, r}(z; f, \psi, \Gamma) = \Phi_{\nu}^{n-r}f$ by Lemma \ref{phieis}. By Theorem \ref{nameless}
$\Phi_{\nu}^{n-r}f$ has coefficients in $\mathbb{Q}(\Lambda_{k, \psi}, G(\psi), \zeta_{\star})$ for each 
$\nu\in X_r$. If $0\leq r\leq n-1$ then by Corollary \ref{phirational} $F_k^{n, r}(z; f, \psi, \Gamma)$ also has coefficients in 
$\mathbb{Q}(\Lambda_{k, \psi}, G(\psi), \zeta_{\star})$. If $r=n$ then this is given immediately 
by Theorem \ref{nameless}.
\end{proof}

Let $\mathcal{E}_k^n := \prod_{r=0}^{n-1}\mathcal{E}_k^{n, r}$ denote the space of all Eisenstein series. The well-known decomposition $\mathcal{M}_k^n(\Gamma, \psi) = \mathcal{S}_k^n(\Gamma, \psi)\oplus\mathcal{E}_k^n(\Gamma, \psi)$ of Theorem \ref{decomp} is proven inductively and each step involves the use of the Eisenstein series $F_k^{n, r}$. Observing this proof along with Theorem \ref{harrismain} gives the following corollary. 

\begin{corollary} \label{maincoro} Assume that $k>2n$. Then, defining
\[
\mathscr{L} := \mathbb{Q}(\Lambda_{k, \psi}, G(\psi), \zeta_{\star}),
\]
we have
\[ 
	\mathcal{M}_k^n(\Gamma,\psi,  \mathscr{L}) 
= \mathcal{S}_k^n(\Gamma, \psi, \mathscr{L})\oplus 
\mathcal{E}_k^n(\Gamma, \psi, \mathscr{L}).
\]
\end{corollary}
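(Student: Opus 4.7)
The $\supseteq$ containment is immediate: $\mathcal{S}_k^n(\Gamma,\psi,\mathscr{L})$ and $\mathcal{E}_k^n(\Gamma,\psi,\mathscr{L})$ are subspaces of $\mathcal{M}_k^n(\Gamma,\psi,\mathscr{L})$ whose intersection vanishes by Theorem \ref{decomp}. So the content is the $\subseteq$ direction. Given $f\in\mathcal{M}_k^n(\Gamma,\psi,\mathscr{L})$, invoke Theorem \ref{decomp} to write $f=\sum_{r=0}^n f_r$ with $f_r\in\mathcal{E}_k^{n,r}(\Gamma,\psi)$. It suffices to show each summand $f_r$ has coefficients in $\mathscr{L}$, for then $f_n$ is the desired cuspidal part and $\sum_{r<n}f_r$ the Eisenstein part.

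The plan is to mirror Shimura's inductive construction of the decomposition via the Klingen operators $F_k^{n,r}$, tracking fields of definition throughout. Introduce the truncations $h_r := f-\sum_{s<r}f_s = \sum_{s\geq r}f_s$, so $h_0=f$ is $\mathscr{L}$-rational; I induct on $r$, assuming $h_r$ is $\mathscr{L}$-rational and deducing the same for $f_r$ and hence for $h_{r+1}=h_r-f_r$. The key structural observation is that $\Phi_\xi^{n-r}f_s=0$ whenever $s>r$, because $\Phi_\star^{n-s}f_s$ is a cusp form of degree $s$ by Theorem \ref{isom} and any further application of $\Phi$ annihilates a cusp form. Consequently $\Phi_\xi^{n-r}h_r = \Phi_\xi^{n-r}f_r \in \mathcal{S}_k^r(\xi\Gamma\xi^{-1}\cap\mfrak{P}^{n,r},\psi)$, and Theorem \ref{nameless} together with the inclusion $\zeta_{\mfrak{c}},\zeta_{\star}\in\mathscr{L}$ (the former arising through $G(\psi)$ and the eigenvalue data in $\Lambda_{k,\psi}$) shows that these cusp forms are $\mathscr{L}$-rational. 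Hence $F_k^{n,r}(z;h_r,\psi,\Gamma)$ is well defined, belongs to $\mathcal{E}_k^{n,r}(\Gamma,\psi)$, and by Corollary \ref{phirational} has coefficients in $\mathscr{L}$. By Lemma \ref{phieis} its image under $\Phi_\star^{n-r}$ coincides with $\Phi_\star^{n-r}f_r$, so the isomorphism of Theorem \ref{isom} forces $F_k^{n,r}(z;h_r,\psi,\Gamma)=f_r$, closing the induction step. At $r=n$ there is nothing to construct: $f_n=h_n$ is automatically an $\mathscr{L}$-rational cusp form.

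The main obstacle to overcome is that Theorem \ref{harrismain} (and its corollary Corollary \ref{phirational}) are formulated for inputs lying in $\mathcal{E}_k^{n,r}$, whereas the intermediate forms $h_r$ only live in $\bigoplus_{s\geq r}\mathcal{E}_k^{n,s}$. One must recognise that the proof of Theorem \ref{harrismain} genuinely requires only that $\Phi_\xi^{n-r}f$ be an $\mathscr{L}$-rational cusp form at every $\xi\in X_r$, a hypothesis ensured by the degree-filtration vanishing described above. The remaining fields-of-definition accounting — that slashing by cusp representatives does not enlarge $\mathscr{L}$ — is subsumed into $\mathscr{L}$ already containing $\zeta_{\star}$ and $\zeta_{\mfrak{c}}$. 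Everything else is a tight rerun of the inductive mechanism already developed in this section.
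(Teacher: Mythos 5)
Your proof is correct and follows essentially the same route as the paper's: both rerun Shimura's inductive construction through the Klingen lifts $F_k^{n,r}$, using Lemma \ref{phieis}, Theorem \ref{nameless} and Corollary \ref{phirational} to track $\mathscr{L}$-rationality cusp by cusp. The only difference is cosmetic — the paper defines the components directly as $f_r := F_k^{n,r}(z; f - f_0 - \cdots - f_{r-1}, \psi, \Gamma)$, whereas you start from the decomposition of Theorem \ref{decomp} and identify its components with these lifts via the injectivity of $\Phi_{\star}^{n-r}$ in Theorem \ref{isom}; your explicit handling of why the rationality results apply to forms lying only in $\bigoplus_{s\geq r}\mathcal{E}_k^{n,s}$ makes precise a step the paper leaves implicit.
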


\begin{proof} That this follows easily from Theorem \ref{harrismain} above will be clear after outlining the proof, taken from \cite[pp. 581 -- 582]{Shimuraeis} of the decomposition of Theorem \ref{decomp}. Let $f\in\mathcal{M}_k(\Gamma, \psi)$ be such that $\lambda_r(\Gamma\cap\mfrak{P}^{n, r})^{[k]} = 1$ for all $r$. 
Put $f_0 = F_k^{n, 0}(z; f, \psi, \Gamma)$. Then $\Phi_{\star}^n(f-f_0) = 0$ by Lemma \ref{phieis} so that $\Phi^{n-1}_{\nu}(f-f_0)$ is a cusp form for each $\nu$. Then put $f_1 
:= F_k^{n, 1}(z; f-f_0, \psi, \Gamma)$ and repeat the above procedure to get $f_2 = F_k^{n, 2}
(z;f-f_0-f_1, \psi, \Gamma)$ and so on. At the final step we obtain $0 = \Phi^0(f-f_0-f_1-\cdots -f_n) = f-f_0-\cdots -f_n$ and this gives Theorem \ref{decomp}. So we see that if $f$ has coefficients in 
$\mathbb{Q}(\Lambda_{k, \psi}, G(\psi), \zeta_{\star})$ then, by Theorem \ref{main}, so do each of
$f_0, f_1, \dots, f_n$. 
\end{proof}

\section{Special values}
\label{specialvalues}

The results of the previous section allow more special values to be determined via the method used earlier. This is done by relaxing growth conditions on holomorphic projection. We make use of the notation $z^{-k-|2s|} := z^{-k}|z|^{-2s}$.

\begin{definition} \label{moderategrowth} 
If $F\in C_k^{\infty}(\Gamma, \psi)$ then we say that $F$ is of 
\emph{moderate growth} if, for all $z\in\mathbb{H}_n$ and sufficiently large $\Re(s)\gg 0$, 
we have that the integral
\[ 
	\int_{\mathbb{H}_n} f(w) |\bar{w}-z|^{-k-|2s|}\Delta(w)^{k+s}d^{\times}w
\]
is absolutely convergent and admits an analytic continuation over $s$ to the point 
$s=0$.
\end{definition}

Forms of moderate growth are sent by the projection of Theorem \ref{holoproj} to $\mathcal{M}_k$ instead of $\mathcal{S}_k$.

\begin{theorem}\label{newholoproj} Assume $F\in C_k^{\infty}(\Gamma, \psi)$ has moderate growth and that $k>2n$. Assume that $F$ is of moderate growth. Then with $\mu(k, n)$, $c(\tau)$, and the projection map defined as in Theorem \ref{holoproj}, we have $\mathbf{Pr}(F)\in \mathcal{M}_k(\Gamma, \psi)$. Furthermore $\langle F, g\rangle = \langle \mathbf{Pr}(F), g\rangle$ for any $g\in\mathcal{S}_k(\Gamma', \psi)$ and $\Gamma'\leq\Gamma$ of finite index.
\end{theorem}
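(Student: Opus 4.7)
The plan is to adapt the Poincaré-series / kernel-function proof of Theorem \ref{holoproj} by introducing a complex parameter $s$ that regularises the unfolding computation, and then analytically continuing back to $s = 0$; this is the standard device for passing from bounded to moderate growth in holomorphic projection arguments.

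Concretely, I would replace $G_{\tau}$ by the shifted Poincar\'{e} series
\[
    G_{\tau}(z, s) := \sum_{\alpha \in \Gamma_{\infty} \backslash \Gamma} \psi_{\mfrak{c}}^{-1}(|a_{\alpha}|)\, j_{\alpha}^k(z)^{-1} |j(\alpha, z)|^{-2s} e(\tr(\tau \alpha z)),
\]
which is absolutely convergent for $\Re(s) \gg 0$ by the same majorisation used in Proposition \ref{poincare}(i) (control via $H_k(z, b)$ for large $b$). For such $s$, the unfolding argument of Proposition \ref{poincare}(ii) now yields
\[
    N(\mfrak{b})^{\frac{n(n+1)}{2}}\Vol(\Gamma \backslash \mathbb{H}_n)\, \langle F, G_{\tau}(\cdot, s)\rangle = \int_Y c_F(\tau, y) e^{-2\pi \tr(\tau y)} |y|^{k+s-1-n}\, dy,
\]
and the moderate growth hypothesis on $F$ (Definition \ref{moderategrowth}) is precisely what supplies the analytic continuation of the left-hand side to a neighbourhood of $s = 0$. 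Its value at $s = 0$ recovers the constant prescribed in Theorem \ref{holoproj}, so that $c(\tau)$ is well-defined and Theorem~\ref{holoproj}'s definition of $\mathbf{Pr}(F)$ makes sense.

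To realise $\mathbf{Pr}(F)$ as a function rather than a formal $q$-series, I would assemble the regularised reproducing kernel
\[
    K(z, w; s) := N(\mfrak{b})^{\frac{n(n+1)}{2}}\Vol(\Gamma \backslash \mathbb{H}_n)\mu(k, n)^{-1}\sum_{0 < \tau \in S_{\mfrak{b}}^{\triangledown}} |4\tau|^{k-\frac{n+1}{2}} G_{\tau}(z, s) e(-\tr(\tau \bar{w}))
\]
and continue in $s$ to zero, exactly as in Sturm's construction after Proposition \ref{poincare}. Transformation under $\Gamma$ with character $\psi$ is inherited from the kernel since each $G_{\tau}(\cdot, s)\|_k \gamma = \psi_{\mfrak{c}}(|a_{\gamma}|)G_{\tau}(\cdot, s)$ for $\Re(s) \gg 0$, and this identity is preserved under analytic continuation; holomorphy in $z$ is automatic since the Fourier expansion has only $\tau > 0$ terms and therefore no antiholomorphic dependence. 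Finally the reproducing identity $\langle F, g\rangle = \langle \mathbf{Pr}(F), g\rangle$ for $g \in \mathcal{S}_k(\Gamma', \psi)$ reduces for $\Re(s) \gg 0$ to expanding $g$ against the Poincar\'{e} basis via Proposition \ref{poincare}(iii), and the cuspidal decay of $g$ ensures that both sides are continuous as $s \to 0$.

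The main obstacle is justifying the interchange of the Petersson integral with the analytic continuation in $s$ — one needs a uniform polynomial-growth bound on $c(\tau)$ in $\tau$ (obtained by shifting the $s$-contour against the moderate-growth estimate on $F$), which both guarantees convergence of $\sum_{\tau>0} c(\tau) e(\tr(\tau z))$ to a genuine holomorphic function and clarifies why this function lies in $\mathcal{M}_k$ rather than necessarily in $\mathcal{S}_k$. The point is that the vanishing of the constant term at the standard cusp (from the restriction $\tau > 0$) does not force vanishing at the other cusps, so the output can have nontrivial Eisenstein components — which is precisely what is needed to leverage the decomposition $\mathcal{M}_k(\mathscr{L}) = \mathcal{S}_k(\mathscr{L}) \oplus \mathcal{E}_k(\mathscr{L})$ of Corollary \ref{maincoro} for the second main theorem.
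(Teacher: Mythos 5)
Your approach is genuinely different from the paper's, and it breaks down at its central step. The paper does not regularise the single-variable Poincar\'e series $G_{\tau}$ at all; it adapts Panchishkin's \emph{two-variable} series
\[
	P(z, w, s) := (\Delta(z)\Delta(w))^s\sum_{\gamma\in \Gamma}\psi_{\mfrak{c}}^{-1}(|a_{\gamma}|)j_{\gamma}^k(z)^{-1}|j(\gamma, z)|^{-2s}|\gamma z+w|^{-k-|2s|},
\]
proves the symmetry $j_{\gamma}^k(z)|\gamma z+ w|^k = j_{\gamma'}^k(w)|\gamma'w+z|^k$ (Proposition \ref{symmetry}, which is the genuinely metaplectic ingredient, requiring Shimura's theta-transformation formula to control the factor $h_{\gamma}$), and sets $\mathbf{Pr}(F)(z) := \langle F(w), \mu^{-1}P(-\bar{z}, w, s)\rangle|_{s=0}$. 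The crucial point you have missed is that Definition \ref{moderategrowth} is tailored to exactly this kernel: since $P$ is summed over all of $\Gamma$ (not $\Gamma_{\infty}\bslsh\Gamma$), unfolding $\langle F(w), P(-\bar{z}, w, s)\rangle$ by the automorphy of $F$ collapses the integral over $\Gamma\bslsh\mathbb{H}_n$ to precisely $\int_{\mathbb{H}_n}F(w)|\bar{w}-z|^{-k-|2s|}\Delta(w)^{k+s}d^{\times}w$, i.e.\ to the integral occurring in that definition. Moderate growth is therefore, by construction, the exact hypothesis needed for continuation to $s=0$; no transfer argument is required.

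Your proposal instead needs convergence and continuation to $s=0$ of the Fourier-coefficient integrals $\int_Y c_F(\tau, y)e^{-2\pi\tr(\tau y)}|y|^{k+s-1-n}dy$, and you assert that Definition \ref{moderategrowth} ``is precisely what supplies'' this. That is unjustified, and it is where the proof fails: moderate growth as defined is not a pointwise or growth estimate on $F$ or on $c_F(\tau, y)$; it is a convergence-and-continuation statement about one specific integral transform. To pass from it to your integrals you would have to Fourier-expand the kernel $|\bar{w}-z|^{-k-|2s|}$ in $\Re(w)$, but for $s\neq 0$ this kernel is non-holomorphic and its Fourier coefficients are confluent hypergeometric (Whittaker-type) functions of $y$ and $\tau$, not $e^{-2\pi\tr(\tau y)}$ times a power of $|y|$. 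So your $s$-deformation and the one implicit in the definition are \emph{different} regularisations of the same $s=0$ object, and neither convergence nor continuation transfers between them without a substantial new argument. For the same reason your proposed fix for the interchange-of-limits issue --- ``shifting the $s$-contour against the moderate-growth estimate'' --- has no content: there is no estimate to shift against. A smaller but symptomatic error: your unfolding identity is false as written, since without the prefactor $\Delta(z)^s$ (present in the paper's $P(z,w,s)$) the factor $|j(\alpha, z)|^{-2s}$ becomes $(\Delta(\alpha z)/\Delta(z))^{s}$ under the automorphy of $F$, so the integrand is not a function of $\alpha z$ alone and the sum does not unfold.
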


The proof of this is also given by the study of a certain two-variable Poincar\'{e} series which is defined for variables $z, w\in\mathbb{H}_n$ and $s\in\mathbb{C}$ as 
\[
	P(z, w, s) := (\Delta(z)\Delta(w))^s\sum_{\gamma\in \Gamma}\psi_{\mfrak{c}}^{-1}(|a_{\gamma}|)j_{\gamma}^k(z)^{-1}|j(\gamma, z)|^{-2s}|\gamma z+w|^{-k-|2s|}.
\]
This converges absolutely and uniformly on products $V(d)\times V(d)$ for $\Re(2s)>2m-k+1$, $d>0$, and $V(d) := \{z\in\mathbb{H}_n\mid y>dI_n, \tr(x^Tx)\leq d^{-1}\}$, see \cite[p. 72]{Panchishkin}. This series has been altered from the definition of the integral-weight version found in \cite{Panchishkin} only by the change in the factor of automorphy $j_{\gamma}^k(z)^{-1}$. Once it is shown that this series exhibits the analogous three properties to (4.9), (4.10), and (4.12) of \cite[p. 72]{Panchishkin}, then the proof of Theorem \ref{newholoproj} follows precisely as is found there.

\begin{proposition} \label{symmetry} For any $\gamma\in\Gamma$ let $\gamma' := \begin{psmallmatrix} I_n & 0 \\ 0 & -I_n\end{psmallmatrix}\gamma^{-1}\begin{psmallmatrix} I_n & 0 \\ 0 & -I_n\end{psmallmatrix}$. Then
\[
	j_{\gamma}^k(z)|\gamma z+ w|^k = j_{\gamma'}^k(w)|\gamma'w+z|^k.
\]
\end{proposition}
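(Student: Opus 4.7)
The plan is to reduce the half-integral identity to its integer-weight counterpart by tracking the square-root factors $h_\gamma(z)$ carefully.

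First I would spell out the block form of $\gamma'$. Since $\gamma\in Sp_n$ we have
$\gamma^{-1}=\begin{psmallmatrix} d_\gamma^T & -b_\gamma^T \\ -c_\gamma^T & a_\gamma^T\end{psmallmatrix}$,
and multiplying on either side by $\sigma=\begin{psmallmatrix} I_n & 0 \\ 0 & -I_n\end{psmallmatrix}$ gives
$\gamma'=\begin{psmallmatrix} d_\gamma^T & b_\gamma^T \\ c_\gamma^T & a_\gamma^T\end{psmallmatrix}.$
A direct check of the three defining symplectic relations shows $\gamma'\in G$.

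Next I would prove the integer-weight analogue
$j(\gamma,z)\,|\gamma z+w|=j(\gamma',w)\,|\gamma' w+z|$
via a "sandwich determinant" trick. A direct computation gives
$(I_n,w)\,\gamma\begin{psmallmatrix} z \\ I_n\end{psmallmatrix}=(a_\gamma z+b_\gamma)+w(c_\gamma z+d_\gamma)=(\gamma z+w)(c_\gamma z+d_\gamma),$
whose determinant is $|\gamma z+w|\cdot j(\gamma,z)$. Analogously,
$(I_n,z)\,\gamma'\begin{psmallmatrix} w \\ I_n\end{psmallmatrix}=(d_\gamma^T w+b_\gamma^T)+z(c_\gamma^T w+a_\gamma^T)=(\gamma' w+z)(c_\gamma^T w+a_\gamma^T),$
whose determinant is $|\gamma' w+z|\cdot j(\gamma',w)$. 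Because $z^T=z$ and $w^T=w$, the second matrix is exactly the transpose of the first, so the two determinants coincide.

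To lift this to half-integral $k$, I split $j_\gamma^k(z)=h_\gamma(z)\,j(\gamma,z)^{[k]}$ (and likewise for $\gamma'$) and write $|\gamma z+w|^k=|\gamma z+w|^{[k]}\cdot|\gamma z+w|^{1/2}$ for a branch of the square root chosen continuously on the simply-connected domain $\mathbb{H}_n\times\mathbb{H}_n$ and normalised at $z=w=iI_n$ (and similarly for $|\gamma' w+z|^{1/2}$). Raising the integer identity to the $[k]$-th power disposes of all integer-exponent factors, so the whole statement collapses to the "weight-$1/2$" version
$h_\gamma(z)\,|\gamma z+w|^{1/2}=h_{\gamma'}(w)\,|\gamma' w+z|^{1/2}.$

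The main obstacle is this final claim. I would square both sides and apply property (\ref{h1}) to obtain $\zeta(\gamma)\,j(\gamma,z)|\gamma z+w|=\zeta(\gamma')\,j(\gamma',w)|\gamma' w+z|$; the integer identity cancels the $j$ and $|\cdot|$ factors and reduces this to the scalar equality $\zeta(\gamma)=\zeta(\gamma')$. This can be extracted from the explicit construction of $h_\gamma$ in \cite[pp. 294--295]{Shimuraold} applied to the involution $\gamma\mapsto\sigma\gamma^{-1}\sigma$. Once the squared identity is in hand, connectedness of $\mathbb{H}_n\times\mathbb{H}_n$ together with evaluation at $\gamma=I_{2n}$ (where $\gamma'=I_{2n}$ and both sides collapse to the same branch of $|z+w|^k$ up to a common scalar) forces the remaining $\pm 1$ ambiguity to be $+1$, completing the proof.
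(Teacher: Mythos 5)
Your first half is correct, and in fact sharper than what the paper records: the transpose identity $\bigl[(a_\gamma z+b_\gamma)+w(c_\gamma z+d_\gamma)\bigr]^T=(d_\gamma^Tw+b_\gamma^T)+z(c_\gamma^Tw+a_\gamma^T)$ gives the exact equality of determinants $j(\gamma,z)\,|\gamma z+w|=j(\gamma',w)\,|\gamma'w+z|$, whereas the paper only invokes a ``routine calculation'' for the equality of absolute values. Squaring the half-integral identity and using (\ref{h1}) to reduce everything to (a) $\zeta(\gamma)=\zeta(\gamma')$ and (b) a residual sign $\pm 1$ is also a legitimate reformulation.

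However, neither (a) nor (b) is actually established, and together they are the entire content of the proposition. For (a) you assert that $\zeta(\gamma)=\zeta(\gamma')$ ``can be extracted from the explicit construction of $h_\gamma$'' in \cite{Shimuraold}; that is a claim, not an argument --- the behaviour of the metaplectic factor under $\gamma\mapsto\begin{psmallmatrix} I_n&0\\0&-I_n\end{psmallmatrix}\gamma^{-1}\begin{psmallmatrix} I_n&0\\0&-I_n\end{psmallmatrix}$ is exactly the nontrivial arithmetic input. For (b) the argument fails: connectedness of $\mathbb{H}_n\times\mathbb{H}_n$ makes the sign constant in $(z,w)$ for each \emph{fixed} $\gamma$, but the sign is a priori a function of $\gamma$, and $\Gamma$ is discrete, so no continuity argument in $\gamma$ is available; evaluating at $\gamma=I_{2n}$ determines the sign only for $\gamma=I_{2n}$. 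An identity known only up to a $\gamma$-dependent sign would be useless for the symmetry (\ref{P1}) of the Poincar\'e series, which is what this proposition feeds into. The paper closes precisely this point by a phase computation rather than a sign-chase: it quotes Proposition 2.6 of \cite{Shimuratheta}, which for $\gamma'=(\gamma^*)^{-1}$ (Shimura's notation) gives $h((\gamma')^{-1},w)/|h((\gamma')^{-1},w)|=\overline{h_\gamma(z)/|h_\gamma(z)|}$, and combines it with the cocycle relation (\ref{h3}) in the form $h_{\gamma'}(w)=h((\gamma')^{-1},w)^{-1}$ to conclude $h_{\gamma'}(w)/|h_{\gamma'}(w)|=h_\gamma(z)/|h_\gamma(z)|$; together with the determinant identity this yields the equality exactly, with no ambiguity left to resolve. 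To repair your proof you would need to import this result of Shimura (or an equivalent explicit evaluation of the theta multiplier under the involution), at which point both (a) and (b) follow; without it the proof is incomplete at its crux.
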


\begin{proof} By routine calculation $\big||\mu(\gamma, z)(\gamma z+ w)|^{\kappa}\big| = \big||\mu(\gamma', w)(\gamma' w+z)|^{\kappa}\big|$ for any $\kappa\in\frac{1}{2}\mathbb{Z}$. We claim that
\[
	\frac{h_{\gamma'}(w)}{|h_{\gamma'}(w)|} = \frac{h_{\gamma}(z)}{|h_{\gamma}(z)|}\in\mathbb{T}.
\]
These are constants independent of $w$ and $z$ respectively. Proposition 2.6 of \cite{Shimuratheta} gives that
\[
	\frac{h((\gamma')^{-1}, w)}{|h((\gamma')^{-1}, w)|} = \overline{\frac{h_{\gamma}(z)}{|h_{\gamma}(z)|}},
\]
since $\gamma' = (\gamma^*)^{-1}$ in Shimura's notation. We have $h_{\gamma'}(z) = h((\gamma')^{-1}, z)^{-1}$ using the cocycle relation of (\ref{h3}), which gives the claim and so
\begin{align*}
	h_{\gamma}(z)|\gamma z + w|^{\frac{1}{2}} &= \frac{h_{\gamma}(z)}{|h_{\gamma}(z)|}\big||\mu(\gamma, z)(\gamma z+w)|^{\frac{1}{2}}\big| \\
& = \frac{h_{\gamma'}(w)}{|h_{\gamma'}(w)|}\big||\mu(\gamma', w)(\gamma'w+z)|^{\frac{1}{2}}\big| \\
& = h_{\gamma'}(z)|\gamma' w +z|^{\frac{1}{2}},
\end{align*}
which gives the proposition.
\end{proof}

The above proposition proves the first two of the following three properties:
\begin{align}
	P(z, w, s) &= P(w, z, s), \label{P1} \\
	P(\gamma_1z, \gamma_2w, s) &=\psi_{\mfrak{c}}(|a_{\gamma_1}a_{\gamma_2}|)j_{\gamma_1}^k(z) j_{\gamma_2}^k(w)P(z, w, s), \label{P2} \\
	\langle F(w), P(-\bar{z}, w, s)\rangle &= \mu F(z), \label{P3}
\end{align}
for any $F\in C_k^{\infty}(\Gamma, \psi)$ such that the integral of (\ref{P3}) converges, and for some constant $\mu$ given in \cite[p. 73]{Panchishkin}. By definition the left-hand side of (\ref{P3}) is
\[
	(-1)^{ns}\Delta(w)^s\int_{\Gamma\bslsh\mathbb{H}_n}\sum_{\gamma\in\Gamma}\psi_{\mfrak{c}}(|a_{\gamma}|)F(z)\overline{j_{\gamma}^k(z)^{-1}}|j(\gamma, z)|^{-2s}|\gamma\bar{z}-w|^{-k-|2s|}\Delta(z)^{k+s}d^{\times}z.\]
Now use that $\psi_{\mfrak{c}}(|a_{\gamma}|)F(z) = j_{\gamma}^k(z)^{-1}F(\gamma z)$ and $\Delta(z) = |j(\gamma, z)|^2\Delta(\gamma z)$ to get
\begin{align*}
	(-1)^{ns}&\Delta(w)^s\int_{\Gamma\bslsh\mathbb{H}_n}\sum_{\gamma\in\Gamma}F(\gamma w)|\gamma\bar{w}-z|^{-k-|2s|}\Delta(\gamma z)^{k+s}d^{\times}z \\
	&=(-1)^{ns}\Delta(w)^s\int_{\mathbb{H}_n}F(w)|\bar{w}-z|^{-k-|2s|}\Delta(z)^{k+s}d^{\times}z,
\end{align*}
which is exactly of the form found in (4.14) of \cite[p. 73]{Panchishkin}. So the rest of that proof using Cayley transforms applies, and we get property (\ref{P3}). Note that the above integral is convergent and has analytic continuation to $s = 0$ precisely when $F$ is of moderate growth. 

To finish the proof of the projection in this case, set $K(z, w, s) := \mu^{-1}P(-\bar{z}, w, s)$ and then define $\mathbf{Pr}(F)(z) := \langle F(w), K(z, w, s)\rangle|_{s=0}$. The reader is referred to \cite[pp. 74--75]{Panchishkin} for the details here.

\begin{proposition}\label{newbounds} 
Let $k$ be a half-integral weight, $\ell\in\frac{1}{2}\mathbb{Z}$, and $b > \frac{n+1}{2}$. If $g\in\mathcal{M}_{\ell}(\Gamma, \psi)$ then $F^*(z) := g(z)H_{k-\ell}(z, b)$ is of moderate growth provided 
\[
	\tfrac{k+\ell}{2} - n(k+1) - 2 < b < n(k+1) - \tfrac{k+\ell}{2} + 2.
\]
\end{proposition}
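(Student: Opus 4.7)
The strategy parallels the proof of Corollary \ref{boundstwo} in the bounded-growth case. We need to show that
\[
I(z,s) := \int_{\mathbb{H}_n} F^{\ast}(w)\,|\bar w - z|^{-k-|2s|}\,\Delta(w)^{k+s}\,d^{\times}w
\]
converges absolutely for $\Re(s)\gg 0$ and extends analytically to $s=0$. First, I would combine Corollary \ref{boundsonemod}(ii) and (iii) (noting the obvious sign correction in (ii)) to obtain a pointwise majorant
\[
|F^{\ast}(w)| \leq C\prod_{j=1}^{n}\bigl(1+\lambda_j^{-\ell}\bigr)\bigl(\lambda_j^{b-(k-\ell)/2}+\lambda_j^{-b-(k-\ell)/2}\bigr),
\]
where $\lambda_1,\dots,\lambda_n$ are the eigenvalues of $\Im(w)$. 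Expanding, it suffices to bound $I(z,s)$ for each of the $4^n$ monomials $\prod_j\lambda_j^{\alpha_j}$, with $\alpha_j$ ranging over $\{b-(k-\ell)/2,\ -b-(k-\ell)/2,\ b-(k+\ell)/2,\ -b-(k+\ell)/2\}$.

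Second, I would integrate out the $x$-variables using the standard identity
\[
\int_{X}\det(\bar w-z)^{-k}|\det(\bar w-z)|^{-2\Re(s)}\,dx \;\propto\; \det\bigl(\Im(w)+\Im(z)\bigr)^{n+1-k-2\Re(s)},
\]
valid for $\Re(s)$ sufficiently large (this is the analogue of Panchishkin's Siegel-type formula; the half-integrality of $k$ enters only through the absolute values). This reduces the task to controlling an integral over the cone $Y$ of positive-definite symmetric matrices, weighted by the majorant.

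Third, after a change of variable diagonalising $\Im(w)$ and exploiting the $GL_n(\mathbb{R})$-invariance of the measure $d^{\times}y$, each monomial contribution becomes a Koecher gamma integral whose absolute convergence is governed by explicit linear inequalities among the $\alpha_j$ and $\Re(s)$, and whose meromorphic continuation in $s$ is expressed as a product of $\Gamma_n$-factors. Taking $\Re(s)$ large handles absolute convergence; then continuing to $s=0$ and requiring this point to be a \emph{regular} point of each of the $4^n$ continued monomial integrals produces the stated double inequality on $b$.

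The main obstacle is the fourth step: the worst-case exponent $b-(k-\ell)/2$ (responsible for large-$\lambda_j$ behaviour) yields, through the $\Gamma$-factor bookkeeping, the upper bound $b<n(k+1)-(k+\ell)/2+2$, while the exponent $-b-(k+\ell)/2$ (responsible for small-$\lambda_j$ behaviour) yields the lower bound $b>(k+\ell)/2-n(k+1)-2$. The delicate step is verifying that no $\Gamma_n$-factor from the combined $x$- and $y$-integrations produces a pole at $s=0$ within this range of $b$; this is exactly where the constants $n(k+1)\pm 2$ in the statement arise.
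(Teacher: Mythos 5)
Your plan follows the same skeleton as the paper's proof (majorise $|F^*|$ via Corollary \ref{boundsonemod} with the sign fix in (ii), integrate out $x$, diagonalise $\Im(w)$, and reduce to exponent inequalities on the eigenvalues), but the step in which the stated constants are supposed to arise is wrong. In the correct bookkeeping — and in the paper's proof, which simply sets $s=0$ in Definition \ref{moderategrowth} and checks absolute convergence — \emph{both} inequalities of the proposition come from the $\lambda_j\to\infty$ regime: after the $x$-integration one is left with a factor $|P(\Lambda)|^{-k}$, $P(y)=|-iy-z|$ of degree $n$, whose decay of total order $nk$ must dominate each monomial exponent $\pm b-\tfrac{k-\ell}{2}+k-1-n$; the $+b$ exponent gives the upper bound and the $-b$ exponent gives the lower bound. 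Your claim that the lower bound comes from the small-$\lambda_j$ behaviour of the exponent $-b-\tfrac{k+\ell}{2}$ is false: at $\lambda_j\to 0$ the $x$-integrated factor tends to the finite nonzero value $\|z\|^{-k}$, so that exponent imposes $-b-\tfrac{k+\ell}{2}+k-n-1>-1$, i.e. $b<\tfrac{k-\ell}{2}-n$, which is an \emph{upper} bound — precisely the old bounded-growth bound of Corollary \ref{boundstwo}, and far more restrictive than the proposition's range. Executed as written, your plan does not produce the stated interval for $b$.

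The continuation mechanism you propose is also not sound as stated. Majorisation transfers absolute convergence, not analyticity: the $\Gamma_n$-factor expressions meromorphically continue the $4^n$ \emph{majorant} monomial integrals, but $I(z,s)$ is bounded by, not equal to, their sum, so regularity of those continuations at $s=0$ gives no information about a continuation of $I(z,s)$ itself. What is actually needed — and what the paper's direct $s=0$ check effectively supplies — is convergence of the majorant integrals on an open connected set containing $s=0$ together with the half-plane $\Re(s)\gg 0$; then $I(z,s)$ is itself holomorphic there and the required ``continuation to $s=0$'' is just the integral. So the pole-avoidance criterion must be replaced by genuine convergence inequalities at $s=0$, which is exactly the exponent check above. (A minor further slip: Hua's identity gives $\int_X|\det(x+iy)|^{-(k+2\Re(s))}\,dx\propto\det(y)^{\frac{n+1}{2}-k-2\Re(s)}$, not exponent $n+1-k-2\Re(s)$; this error would propagate into your constants, whereas the paper avoids the exact identity altogether and uses only the cruder bound by $|P(y)|^{-k}$.)
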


\begin{proof} Set $s=0$ in the integral characterising moderate growth in Definition \ref{moderategrowth}. 
Fixing $z\in\mathbb{H}_n$, then let $w=x+iy$ with $\lambda_j$ being the 
eigenvalues of $y$. Notice that $|\bar{w}-z|$ is a polynomial in $x_{ij}, y_{ij}$ 
of degree $n>0$ which is $|-iy-z|$ as $x\to 0$. Hence $\|\bar{w}-z\|^{-k}$ decays as $|x|\to\pm\infty$ and is finite as $x\to 0$.
Then, by Corollary \ref{boundsonemod}, we may write for some constant $\nu$
\[ 
	\int_{\mathbb{H}_n} |F^*(w)|\|\bar{w}-z\|^{-k}|y|^{k-n-1}dydx
\leq \nu\int_Y\upsilon(y)|P(y)|^{-k} dy,
\]
where $P(y)$ is a polynomial in $y_{ij}$ of degree $n$, and 
\[
	\upsilon(y) := \prod_{j=1}^n (1-\lambda_j^{-\ell})\left(\lambda_j^{b-\frac{k-\ell}{2}} + 
\lambda_j^{-b-\frac{k-\ell}{2}}\right)\lambda_j^{k-1-n}.
\]
Let $\widetilde{\Lambda} := \{\diag[\lambda_1, \dots, \lambda_n]\mid 0<\lambda_1\leq \cdots\leq \lambda_n\}$. As is done in the proof of Corollary 2 of 
\cite{Sturm} we may make the 
substitution $y = U\Lambda U^T$, where $U\in O_n(\mathbb{R})$ and 
$\Lambda\in\tilde{\Lambda}$. If $\lambda_i$ are all distinct then this is unique up to multiplication of $U$ by $\diag[\pm 1, \dots, \pm 1]$. Now $\upsilon(y)$ and the Jacobian of the transformation are independent of $U$, therefore the integral over $O_n(\mathbb{R})$ will be finite so long as the integral over $\widetilde{\Lambda}$ is, so it is enough to show that
\[ 
	\int_{\tilde{\Lambda}}\upsilon(y)|P(\Lambda)|^{-k} |J(\lambda_1, \dots, \lambda_n)
|d\lambda_1\cdots d\lambda_n<\infty,
\]
where $J(\lambda_1, \dots, \lambda_n)$ is the Jacobian matrix. To do this we check the limits $\lambda_j\to 0$ and $\lambda_j\to\infty$. 
Firstly, as $\lambda_j\to 0$, then $|P(\Lambda)|^{-k}\to\|z\|^{-k}$ is just finite so we require 
the exponent of each $\lambda_j$ to be greater than $-1$ (and $b>\frac{n+1}{2}$ 
in order for $H$ to be defined). This just gives us the original bounds 
found in Corollary \ref{boundstwo} for bounded growth. For the limit $\lambda_j\to\infty$ we 
have that $|P(\Lambda)|^{-k}$ decays to order $nk$, so as long as the exponent 
of $\lambda_j$ in $\upsilon(y)$ is $\leq nk$ we obtain convergence. That is
\begin{align*}
	b-\tfrac{k-\ell}{2}+k-1-n &\leq nk, \\
	-b-\tfrac{k-\ell}{2} + k-1-n&\leq nk,
\end{align*}
giving the bounds in the statement.
\end{proof}

If $g\in\mathcal{M}_{\ell}(\Gamma, \psi')$ for $\ell\in\frac{1}{2}\mathbb{Z}$, then define
\begin{align*}
	\Omega^+_g &:= \{m\in\tfrac{1}{2}\mathbb{Z}\mid \tfrac{n-2m+2k-2\ell}{4}\in\mathbb{Z}, n<m\leq k-\ell+\tfrac{n}{2}\}, \\
	\Omega^-_g &:= \{m\in\tfrac{1}{2}\mathbb{Z}\mid \tfrac{2m-3n+2k-2\ell-2}{4}\in\mathbb{Z}, 
\tfrac{3n}{2}+1-k+\ell\leq m\leq n\},
\end{align*}
and put $\Omega_g := \Omega^-_g\cup\Omega^+_g$. 

\begin{proposition}\label{newerintegralexp} 
Exclude case (\ref{X}). Let $\ell\in\frac{1}{2}\mathbb{Z}$ and $g\in\mathcal{M}_{\ell}(\Gamma, \psi')$. Assume that $k>2n$. In case (\ref{R1}) set $m_0 := \frac{k+\ell-3}{2}$ and in case (\ref{R2}) set $m_0 :=\frac{2k+2\ell+n-1}{4}-\frac{n+1}{2}$. For all other cases
\[
	m_0 := \begin{cases} \frac{2k+2\ell +2m-n}{4}-\frac{n+1}{2} &\text{if $m>n$}, \\
	\frac{2k+2\ell+3n-2m+2}{4}-\frac{n+1}{2} &\text{if $m\leq n$}. \end{cases}
\]
For every 
$m\in\Omega_g$ there exists $K_{\mathcal{S}}(m, g)\in\mathcal{S}_k(\Gamma, \psi)$,
whose Fourier coefficients belong to $\mathbb{Q}_{ab}(g, \Lambda_{k, \psi}, G(\psi), \zeta_{\star})$, such that
\begin{align*}
	\frac{(4\pi)^{nm_0}}{\pi^{\beta_m}\omega_{\ell}(m, \bar{\psi}\psi')}\Gamma_n(m_0)^{-1}&\left\langle f, g\mathcal{E}_{k-\ell}^{\bar{\psi}\psi'}\left(\cdot, \tfrac{2m-n}{4}\right)\right\rangle = \pi^{n(k-r)-\frac{3n^2+2n+\delta}{4}}\langle f, K_{\mathcal{S}}(m, g)\rangle
\end{align*}
for all $f\in\mathcal{S}_k(\Gamma, \psi)$. Moreover $K_{\mathcal{S}}(m, g)^{\sigma} = K_{\mathcal{S}}(m, g^{\sigma})$ for all $\sigma\in\Aut(\mathbb{C}/\mathbb{Q}(\Lambda_{k, \psi}, G(\psi), \zeta_{\star}))$.
\end{proposition}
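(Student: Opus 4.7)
The plan is to follow the proof of Proposition \ref{newintegralexp} with two key modifications: invoke the extended holomorphic projection of Theorem \ref{newholoproj} in place of Theorem \ref{holoproj}, and then use the orthogonal decomposition of Corollary \ref{maincoro} to extract the cuspidal part of the resulting holomorphic form.

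First, I would verify that the function $\mathcal{F}(z) := g(z)\mathcal{E}_{k-\ell}^{\bar{\psi}\psi'}(z, (2m-n)/4)$ is of moderate growth whenever $m\in\Omega_g$. Since the non-holomorphic Eisenstein series appearing in $\mathcal{F}$ is majorised by $H_{k-\ell}(z, (2m-n)/4)$, this reduces to checking that $b=(2m-n)/4$ lies in the range specified by Proposition \ref{newbounds}; a direct comparison of these inequalities against the bounds defining $\Omega_g^+$ and $\Omega_g^-$ confirms it. The exceptional cases (\ref{R1}) and (\ref{R2}) require a separate but analogous verification, since the corresponding specialisations of $\mathcal{E}_{k-\ell}$ and the shifted value of $m_0$ alter the bookkeeping slightly (this is precisely why $m_0$ is defined piecewise).

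Next, I would apply Theorem \ref{newholoproj} to produce a holomorphic form
\[
\widetilde{K}(m, g) := \pi^{\frac{3n^2+2n+\delta}{4} - n(k-r)}\,\frac{(4\pi)^{nm_0}}{\pi^{\beta_m}\omega_\ell(m, \bar{\psi}\psi')}\,\Gamma_n(m_0)^{-1}\,\mathbf{Pr}(\mathcal{F})\in\mathcal{M}_k(\Gamma, \psi),
\]
whose Fourier coefficients are computed exactly as in the proof of Proposition \ref{newintegralexp}, via Theorem \ref{eisenalg} and Lemma \ref{rationalintegral}, and so lie in $\mathbb{Q}_{ab}(g)$; the $\Aut(\mathbb{C})$-equivariance $\widetilde{K}(m, g)^\sigma = \widetilde{K}(m, g^\sigma)$ is inherited from Theorem \ref{eisenalg} just as before. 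The only genuinely new input appears now: I apply Corollary \ref{maincoro} to obtain a decomposition $\widetilde{K}(m, g) = K_{\mathcal{S}}(m, g) + K_{\mathcal{E}}(m, g)$ with $K_{\mathcal{S}}(m, g)\in\mathcal{S}_k(\Gamma, \psi, \mathbb{Q}_{ab}(g, \Lambda_{k, \psi}, G(\psi), \zeta_\star))$ and $K_{\mathcal{E}}(m, g)$ in the corresponding Eisenstein component. For any $f\in\mathcal{S}_k(\Gamma, \psi)$, orthogonality of cusp forms against Eisenstein series gives $\langle f, K_{\mathcal{E}}(m, g)\rangle = 0$, so $\langle f, \widetilde{K}(m, g)\rangle = \langle f, K_{\mathcal{S}}(m, g)\rangle$, delivering the stated integral identity.

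The Galois equivariance is then immediate: since the decomposition in Corollary \ref{maincoro} is defined over $\mathscr{L} = \mathbb{Q}(\Lambda_{k, \psi}, G(\psi), \zeta_\star)$, any $\sigma\in\Aut(\mathbb{C}/\mathscr{L})$ commutes with the projection onto $\mathcal{S}_k$, so $K_{\mathcal{S}}(m, g)^\sigma$ is the cuspidal part of $\widetilde{K}(m, g^\sigma)$, which is $K_{\mathcal{S}}(m, g^\sigma)$. The main obstacle I anticipate is the verification in the first step: matching the arithmetic conditions cutting out $\Omega_g^+$ and $\Omega_g^-$ against the analytic bounds of Proposition \ref{newbounds}, and in particular showing that the shifted definition of $m_0$ in cases (\ref{R1}) and (\ref{R2}) is precisely what is needed for the Fourier-coefficient computation to go through identically. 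Everything else is a faithful transcription of the earlier argument, with Corollary \ref{maincoro} doing the decisive work of making the $\mathcal{S}_k$/$\mathcal{E}_k$ decomposition algebraic over the field $\mathscr{L}$.
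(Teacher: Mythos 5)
Your overall architecture --- moderate growth, then Theorem \ref{newholoproj}, then the splitting via Corollary \ref{maincoro} and orthogonality against Eisenstein series --- is exactly the paper's proof. However, your first step contains a genuine gap, and it sits precisely at the new values this proposition is designed to capture. For $m\in\Omega_g^-$ one has $m\leq n$, hence $\tfrac{2m-n}{4}\leq\tfrac{n}{4}<\tfrac{n+1}{2}$: the majorant $H_{k-\ell}(z,b)$ is only defined (its series only converges) for $b>\tfrac{n+1}{2}$, and moreover the Eisenstein series at $s=\tfrac{2m-n}{4}$ is itself given by analytic continuation rather than by an absolutely convergent sum, so there is no termwise majorisation by $H_{k-\ell}(z,\tfrac{2m-n}{4})$ at all. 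Your claim that a direct comparison of $b=\tfrac{2m-n}{4}$ against the bounds of Proposition \ref{newbounds} confirms moderate growth therefore fails on all of $\Omega_g^-$, and likewise in cases (\ref{R1}) and (\ref{R2}), where the special point also lies outside the region of absolute convergence. The missing idea is the functional equation of the Eisenstein series in $s\mapsto\tfrac{n+1}{2}-s$: for $m\in\Omega_g^-$ the paper majorises instead by $H_{k-\ell}\bigl(z,\tfrac{n+1}{2}-\tfrac{2m-n}{4}\bigr)$ and checks that \emph{this} reflected value satisfies Proposition \ref{newbounds}. This is also why $m_0$ is defined piecewise across $m>n$ versus $m\leq n$, not only in cases (\ref{R1}) and (\ref{R2}) as you suggest: by Theorem \ref{eisenalg} the order $r$ of the Eisenstein series depends on $\bigl|\tfrac{2m-2n-1}{4}\bigr|$ (and changes again in the exceptional cases), and in every case $m_0=k-r-\tfrac{n+1}{2}$, so the bookkeeping of the Fourier-coefficient computation shifts accordingly.

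The remainder of your argument agrees with the paper and is sound: the Fourier computation via Theorem \ref{eisenalg} and Lemma \ref{rationalintegral} (though you should record the checks $\tfrac{2m-n}{4}\in\Omega_0$ and $k-r>n$ that license those two results, which you left implicit); the decomposition $K(m,g)=K_{\mathcal{S}}(m,g)+K_{\mathcal{E}}(m,g)$ with coefficients in $\mathbb{Q}_{ab}(g,\Lambda_{k,\psi},G(\psi),\zeta_{\star})$ by Corollary \ref{maincoro}; the identity $\langle f,K_{\mathcal{E}}(m,g)\rangle=0$; and the Galois equivariance over $\mathscr{L}=\mathbb{Q}(\Lambda_{k,\psi},G(\psi),\zeta_{\star})$, which follows because $\sigma$ fixing $\mathscr{L}$ commutes with an $\mathscr{L}$-rational direct-sum decomposition. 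Once the functional-equation step is inserted into the moderate-growth verification, your proof coincides with the paper's.
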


\begin{proof} 
Much of this remains the same as the proof of Proposition \ref{newintegralexp}. To apply holomorphic projection we ensure moderate growth of $g(z)H_{k-\ell}(z, \frac{2m-n}{4})$ when $m\in\Omega_g^+$. The analytic continuation of the Eisenstein series is given by a functional equation in $s\mapsto \frac{n+1}{2}-s$, so for $m\in \Omega_g^-$ we consider the majorant $H_{k-\ell}(z, \frac{n+1}{2}-\frac{2m-n}{4})$. That $\frac{2m-n}{4}$ and $\frac{n+1}{2}-\frac{2m-n}{4}$ satisfy the bounds of Proposition \ref{newbounds} is immediate from the definition of $\Omega_g$. Moreover $\frac{2m-n}{4}\in\Omega_0$ -- allowing the application of Theorem \ref{eisenalg} -- and $k-r>n$ -- allowing the application of Lemma \ref{rationalintegral}. The changes in the definition of $m_0$ for the four separate cases is a result of the change to the order $r$ of the non-holomorphic Eisenstein series -- see Theorem \ref{eisenalg}.

Therefore in applying holomorphic projection, and replicating the proof of Proposition \ref{newintegralexp}, we obtain a holomorphic modular form $K(m, g)\in\mathcal{M}_k(\Gamma, \psi)$ with coefficients in $\mathbb{Q}_{ab}(g)$. By Corollary \ref{maincoro} this splits up as $K(m, g) = K_{\mathcal{S}}(m, g) + K_{\mathcal{E}}(m, g)$, where $K_{\mathcal{X}}(m, g)\in\mathcal{X}_k(\Gamma, \psi)$, for $\mathcal{X}\in\{\mathcal{S}, \mathcal{E}\}$, has coefficients in $\mathbb{Q}_{ab}(g, \Lambda_{k, \psi}, G(\psi), \zeta_{\star})$. Since $\langle f, K_{\mathcal{E}}(m, g)\rangle = 0$ we are done.
\end{proof}

Now set $\ell = \frac{n}{2}+\mu$ and assume $k> 2n$ in all cases. If $m\in \Omega^+_{\theta_{\chi}}$ and we are not in cases (\ref{R1}) or (\ref{R2}) then we obtain the same integral expression, (\ref{projintexp}), for $L_{\psi}(m, f, \eta)$. On the other hand, if (\ref{R1}), (\ref{R2}) hold, or $m\in\Omega^-_{\theta_{\chi}}$ then this will be slightly different since here the value $m_0$ required to apply Proposition \ref{newerintegralexp} above is no longer occuring naturally from the original expression in (\ref{intexp}). If $m\in\Omega^-_{\theta_{\chi}}$ then
\[
	m_0 = \tfrac{k+n+\mu-m}{2} = \left(\tfrac{m-n-1+k+\mu}{2}\right) + n - m +\tfrac{1}{2},
\]
from which
\begin{align*}
	(4\pi)^{n\left(\frac{m-n-1+k+\mu}{2}\right)} &= (4\pi)^{nm_0}(4\pi)^{\frac{n}{2}(2m-2n-1)}, \\
	\Gamma_n\left(\tfrac{m-n-1+k+\mu}{2}\right)\Gamma_n(m_0)^{-1}&\in\mathbb{Q}.
\end{align*}
Therefore, from (\ref{intexp}), we obtain
\begin{align*}
	L_{\psi}(m, f, \chi)\in &(2|2\tau|)^{-\frac{\delta}{2}}\pi^{\frac{n}{2}(2m-n)}c_f(\tau, 1)^{-1}\Lambda_{\mfrak{c}, \mfrak{y}}(m, \eta)\prod_{p\in\mathbf{b}}g_p\left((\psi^{\mfrak{c}}\chi^*)(p)p^{-m}\right) \\
	&\times (4\pi)^{nm_0}\Gamma_n(m_0)^{-1}\left\langle f, \theta_{\chi}\mathcal{E}(\cdot, \tfrac{2m-n}{4})\right\rangle\mathbb{Q},
\end{align*}
and, as before, multiplying both sides by $\pi^{-\beta_m}\omega_{\delta}(m, \bar{\eta})^{-1}$ and applying Proposition \ref{newerintegralexp} gives
\begin{align}
	\frac{(2|2\tau|)^{\frac{\delta}{2}}c_f(\tau, 1)L_{\psi}(m, f, \chi)}{\pi^{\beta_m+n(k+m-r)-\frac{5n^2+2n+\delta}{4}}\omega_{\delta}(m, \bar{\eta})} \in\Lambda_{\mfrak{c}, \mfrak{y}}(m, \eta)\prod_{p\in\mathbf{b}}g_p\left((\psi^{\mfrak{c}}\chi^*)(p)p^{-m}\right)\langle f, K_{\mathcal{S}}(m, \theta_{\chi})\rangle\mathbb{Q}. \label{newerprojintexp}
\end{align}
If we are in cases (\ref{R1}) or (\ref{R2}) then $(4\pi)^{n(\frac{m-n-1+k+\mu}{2})} = (4\pi)^{nm_0+n}$ and rationality of the $\Gamma$-factors is, again, preserved. Hence
\begin{align}
	\frac{(2|2\tau|)^{\frac{\delta}{2}}c_f(\tau, 1)L_{\psi}(m, f, \chi)}{\pi^{\beta_m+n(k-r)-\frac{n^2-4n+\delta}{4}}\omega_{\delta}(m, \bar{\eta})} \in\Lambda_{\mfrak{c}, \mfrak{y}}(m, \eta)\prod_{p\in\mathbf{b}}g_p\left((\psi^{\mfrak{c}}\chi^*)(p)p^{-m}\right)\langle f, K_{\mathcal{S}}(m, \theta_{\chi})\rangle\mathbb{Q}. \label{Rprojintexp}
\end{align}

We can also make some improvements on the bounds for $k$ in Theorem \ref{innerprod}. Let
\[
	c_m = \beta_m + n(k-r) -\tfrac{n^2-4n+\delta}{4}
\]
in cases (\ref{R1}) and (\ref{R2}). Otherwise let
\[
	c_ =\begin{cases} \beta_m + n(k-r) - \frac{n^2+\delta}{4}& \text{if $m>n$}, \\
	\beta_m + n(k+m-r) - \frac{5n^2+2n+\delta}{4} &\text{if $m\leq n$}. \end{cases}
\]
Recall $\eps\in\{0, 1\}$ as the value such that $\psi_{\infty}(x) = \sgn(x_{\infty})^{[k]+\eps}$.

\begin{theorem}\label{improvedinnerprod} Assume that $k>\max\{2n, \frac{3n}{2}+1+\eps\}$. If $f\in\mathcal{S}_k(\Gamma[\mfrak{b}^{-1}, \mfrak{bc}], \psi, \Lambda)$ is a Hecke eigenform for ideals $(\mfrak{b}^{-1}, \mfrak{bc})\subseteq 2\mathbb{Z}\times 2\mathbb{Z}$, a Hecke character $\psi$, and a system of eigenvalues $\Lambda$, then there exists a non-zero constant $\mu(\Lambda, k, \psi)$ -- dependent only on $\Lambda, k, \psi$ -- such that
\[
	\left(\frac{\langle f, g\rangle}{\mu(\Lambda, k, \psi)}\right)^{\sigma} = \frac{\langle f^{\sigma}, g^{\rho\sigma\rho}\rangle}{\mu(\Lambda_{\sigma}, k, \psi^{\sigma})},
\]
for any $g\in\mathcal{S}_k(\Gamma[(\mfrak{b}')^{-1}, \mfrak{b}'\mfrak{c}'], \psi)$, ideals $((\mfrak{b}')^{-1}, \mfrak{b}'\mfrak{c}')\subseteq \mfrak{b}^{-1}\times \mfrak{bc}$, and $\sigma\in\Aut(\mathbb{C}/\mathbb{Q})$.
\end{theorem}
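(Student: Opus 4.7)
My plan is to adapt the proof architecture of Theorem~\ref{innerprod}, replacing Proposition~\ref{newintegralexp} with the stronger Proposition~\ref{newerintegralexp} together with Corollary~\ref{maincoro}, so as to access a larger set of admissible Eisenstein parameters. The bound $k>\max\{2n, \frac{3n}{2}+1+\eps\}$ reflects this split: the condition $k>2n$ is needed to invoke both Proposition~\ref{newerintegralexp} and Corollary~\ref{maincoro}, while $k>\frac{3n}{2}+1+\eps$ secures the existence of a suitable auxiliary parameter $m_{\eps}$.

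First, I would fix $m_{\eps}\in\frac{1}{2}\mathbb{Z}$ with $\frac{k-m_{\eps}-\eps}{2}\in\mathbb{Z}$ and $\frac{3n}{2}+1<m_{\eps}\leq k-\eps$; existence of such a half-integer is exactly the content of $k>\frac{3n}{2}+1+\eps$. This choice simultaneously places $m_{\eps}$ in $\Omega^+_{\theta_{\mfrak{m}}}$, enabling the use of Proposition~\ref{newerintegralexp} with $g=\theta_{\mfrak{m}}$; puts $m_{\eps}$ within the region of absolute convergence of the Euler product for $L_{\psi}(s, f)$, guaranteeing $L_{\psi}(m_{\eps}, f)\neq 0$; and, since $\frac{3n}{2}+1\geq n+\frac{3}{2}$ for $n\geq 1$, avoids each of the exceptional cases (\ref{X}), (\ref{R1}), (\ref{R2}). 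The constant is then defined by
\[
    \mu(\Lambda, k, \psi):=2^{\delta/2}\pi^{-a}i^{-n^2/2}\omega_{\ell}(m_{\eps}, \bar{\psi})^{-1}L_{\psi}(m_{\eps}, f),
\]
with $a=\beta_{m_{\eps}}+n(k-r)-\frac{n^2+\delta}{4}$, in exact analogy with $\mu'$ in Theorem~\ref{innerprod}. Applying Proposition~\ref{newerintegralexp} to $\theta_{\mfrak{m}}$ produces $K_{\mathcal{S}}(m_{\eps}, \theta_{\mfrak{m}})\in\mathcal{S}_k(\Gamma, \psi)$ with Fourier coefficients in $\mathbb{Q}_{ab}(\Lambda_{k, \psi}, G(\psi), \zeta_{\star})$; since $f$ is cuspidal, the orthogonality $\langle f, K_{\mathcal{E}}(m_{\eps}, \theta_{\mfrak{m}})\rangle=0$ reduces the integral expression (\ref{projintexp}) to the single inner product $\langle f, K_{\mathcal{S}}(m_{\eps}, \theta_{\mfrak{m}})\rangle$, placing us in the same algebraic setup as the original proof.

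The main obstacle I anticipate is the Galois-equivariance $K_{\mathcal{S}}(m_{\eps}, \theta_{\mfrak{m}})^{\sigma}=K_{\mathcal{S}}(m_{\eps}, \theta_{\mfrak{m}}^{\sigma})$ for \emph{all} $\sigma\in\Aut(\mathbb{C}/\mathbb{Q})$, since Proposition~\ref{newerintegralexp} only asserts this for $\sigma$ fixing $\mathscr{L}=\mathbb{Q}(\Lambda_{k, \psi}, G(\psi), \zeta_{\star})$. I would resolve this by observing that the decomposition $\mathcal{M}_k(\Gamma, \psi)=\mathcal{S}_k(\Gamma, \psi)\oplus\mathcal{E}_k(\Gamma, \psi)$ is Galois-equivariant as a decomposition of $\mathbb{C}$-vector spaces: cuspidality is preserved under $\sigma$, and the space spanned by Klingen Eisenstein series is $\sigma$-stable via Theorem~\ref{harrismain} applied to a rational basis. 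Combining this with $K(m_{\eps}, \theta_{\mfrak{m}})^{\sigma}=K(m_{\eps}, \theta_{\mfrak{m}}^{\sigma})$ and the uniqueness of the decomposition then yields the required equivariance for all $\sigma$. The remainder of the proof—comparing $\omega_{\delta}(\bar{\psi})$ to $\omega_{\delta}(\bar{\eta})$ via the Jacobi-sum factor $\mathcal{J}_n(\eta)$, adjusting level using the trace map $\Tr_{\Gamma, \psi}^{\Gamma'}$ and Lemma~5.4 of \cite{Bouganis}, and extending from these specific $g$'s to a full basis of $\mathcal{S}_k(\Gamma, \psi, \Lambda)$ via the orthogonal eigenspace decomposition—goes through verbatim as in the proof of Theorem~\ref{innerprod}.
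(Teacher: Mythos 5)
Your proposal follows essentially the same route as the paper: take the auxiliary ideal $\mfrak{m}$ and theta series as in Theorem~\ref{innerprod}, redefine the period constant using a special value $m_{\eps}$ drawn from the enlarged set $\Omega^{+}_{\theta_{\mfrak{m}}}$ (the paper takes $m_{\eps}=k-\eps$, within your admissible range), with $k>\tfrac{3n}{2}+1+\eps$ ensuring non-vanishing of $L_{\psi}(m_{\eps},f)$ and $k>2n$ ensuring holomorphic projection applies, and then run the remainder of the earlier argument with Proposition~\ref{newerintegralexp} in place of Proposition~\ref{newintegralexp}. Your explicit treatment of the Galois-equivariance of $K_{\mathcal{S}}$ beyond $\Aut(\mathbb{C}/\mathscr{L})$ is a point the paper passes over silently, and is a reasonable addition rather than a divergence in method.
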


\begin{proof} Take $\mfrak{m}$ as in the proof of Theorem \ref{innerprod}; since we have a larger set $\Omega(\theta_{\varphi_{\mfrak{m}}})$ of special values, we can change the special value of the $L$-function that defined our original constant $\mu'(\Lambda, k, \psi)$. Let
\[
	\mu(\Lambda, k, \psi) := 
			2^{\frac{\delta}{2}}\pi^{-c_{k-\eps}}i^{-\frac{n^2}{2}}\omega_{\delta}(k-\eps, \overline{\psi})^{-1}L_{\psi}(k-\eps, f).
\]
We have
\[
	\Omega^+_{\theta_{\mfrak{m}}} = \{m\in\tfrac{1}{2}\mathbb{Z}\mid \tfrac{k-m-\eps}{2}, n < m\leq k-\eps\}.
\]
Then since $k-n\in \Omega(\theta_{\varphi_{\mfrak{m}}})$ the rest of this proof follows exactly as that of Theorem \ref{innerprod}, but using the integral expression (\ref{newerprojintexp}) (resp. (\ref{Rprojintexp})) instead when $m\leq n$ (resp. cases (\ref{R1}) or (\ref{R2})). The bound $k > \frac{3n}{2} + 1 + \eps$ guarantees the non-vanishing of the $L$-function, and therefore of the constant, and the bound $k > 2n$ is there to ensure we can still apply holomorphic projection.
\end{proof}

\ref{main}
\begin{theorem}\label{newmain} 
Let $f\in\mathcal{S}_k(\Gamma, \psi, \Lambda)$ be an eigenform for a half-integral weight $k$, congruence subgroup $\Gamma = \Gamma[\mfrak{b}^{-1}, \mfrak{bc}]$ contained in $\mfrak{M}$, Hecke character $\psi$ satisfying (\ref{char1}) and (\ref{char2}), and system of eigenvalues $\Lambda$. Assume that $k >\max\{2n,  \frac{3n}{2}+1 + \eps\}$, where $\eps\in\{0, 1\}$ is given by $\psi_{\infty}(x) = \psi(x_{\infty})^{[k]+\eps}$. Let $\chi$ be a Hecke character, and choose $\mu\in\{0, 1\}$ so that we have $(\psi\chi)_{\infty}(x) = \sgn(x)^{[k]+\mu}$. Define the sets 
\begin{align*}
	\Omega_{n, k}^+ &:= \{m\in\tfrac{1}{2}\mathbb{Z}\mid \tfrac{m-k-\mu}{2}\in\mathbb{Z}, n<m\leq k-\mu\}, \\
	\Omega_{n, k}^- &:= \{m\in \tfrac{1}{2}\mathbb{Z}\mid \tfrac{m+k-\mu-1}{2}\in\mathbb{Z}, 2n+1-k+\mu\leq m\leq n\}, \\
	\Omega_{n, k} &:= \Omega_{n, k}^-\cup\Omega_{n, k}^+.
\end{align*}
Now if $\tau\in S_+$ is such that $c_f(\tau, 1)\neq 0$ and $m\in\Omega_{n, k}$ then define
\[ 
	Z_{\psi}(m, f, \chi) := |\tau|^{\frac{\delta}{2}}\pi^{-c_m}\mu(\Lambda, k, \psi)^{-1}\omega_{\delta}(m, \bar{\eta})^{-1}L_{\psi}(m, f, \chi). 
\]
We have $Z_{\psi}(m, f, \chi)^{\sigma} = Z_{\psi^{\sigma}}(m, f^{\sigma}, \chi^{\sigma})$ 
for any $\sigma\in\Aut(\mathbb{C}/\mathbb{Q}(\Lambda_{k, \psi}, G(\psi), \zeta_{\star}))$, where, recall, $\Lambda_{k, \psi}$ is defined by (\ref{eigendecompL}) and $\zeta_{\star}$ is the character of (\ref{h1}) restricted to all the cusps of $\Gamma\bslsh\mathbb{H}_n$.  Hence
\[
	Z_{\psi}(m, f, \chi)\in\mathbb{Q}(f, \chi, \Lambda_{k, \psi}, G(\psi), \zeta_{\star}).
\]
\end{theorem}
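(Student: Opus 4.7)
The plan is to mimic the proof of Theorem~\ref{main}, substituting the refined ingredients supplied by the second half of the paper: the enlarged integral expressions (\ref{projintexp}), (\ref{newerprojintexp}), and (\ref{Rprojintexp}); the cuspidal form $K_{\mathcal{S}}(m,\theta_{\chi})$ produced by Proposition~\ref{newerintegralexp}; and the improved inner-product equivariance of Theorem~\ref{improvedinnerprod}. From the definitions one checks that $\Omega_{\theta_{\chi}}^{\pm}=\Omega_{n,k}^{\pm}$, so that for every $m\in\Omega_{n,k}$ the theta series $\theta_{\chi}$ is an allowable $g$ for Proposition~\ref{newerintegralexp}, and the hypothesis $k>\max\{2n,\tfrac{3n}{2}+1+\eps\}$ covers both the bound needed to apply holomorphic projection and the bound needed to ensure that $\mu(\Lambda,k,\psi)$ is non-zero.

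First, when $m\in\Omega_{n,k}^{+}$ outside cases (\ref{R1}) and (\ref{R2}), the integral expression (\ref{projintexp}) gives
\[
    Z_{\psi}(m,f,\chi)\in c_f(\tau,1)^{-1}\Lambda_{\mfrak{c},\mfrak{y}}(m,\eta)\prod_{p\in\mathbf{b}}g_p\bigl((\psi^{\mfrak{c}}\chi^*)(p)p^{-m}\bigr)\frac{\langle f,K_{\mathcal{S}}(m,\theta_{\chi})\rangle}{\mu(\Lambda,k,\psi)}\mathbb{Q},
\]
and the same identity holds for $m\in\Omega_{n,k}^{-}$ using (\ref{newerprojintexp}) and for cases (\ref{R1})/(\ref{R2}) using (\ref{Rprojintexp}); the bookkeeping of $\Gamma$-factors and powers of $\pi$ that accounts for the shift between the index $\tfrac{m-n-1+k+\mu}{2}$ appearing in (\ref{intexp}) and the $m_0$ demanded by Proposition~\ref{newerintegralexp} has already been absorbed into the exponent $c_m$ in the definition of $Z_{\psi}$. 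All ingredients on the right-hand side are $\sigma$-equivariant: $\Lambda_{\mfrak{c},\mfrak{y}}(m,\eta)^{\sigma}=\Lambda_{\mfrak{c},\mfrak{y}}(m,\eta^{\sigma})$, the polynomials $g_p$ have integral coefficients, $c_f(\tau,1)^{\sigma}=c_{f^{\sigma}}(\tau,1)$, the relation $K_{\mathcal{S}}(m,\theta_{\chi})^{\sigma}=K_{\mathcal{S}}(m,\theta_{\chi^{\sigma}})$ is part of Proposition~\ref{newerintegralexp}, and Theorem~\ref{improvedinnerprod} supplies the equivariance of the quotient $\langle f,K_{\mathcal{S}}\rangle/\mu(\Lambda,k,\psi)$. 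Assembling these exactly as in the final lines of the proof of Theorem~\ref{main} yields $Z_{\psi}(m,f,\chi)^{\sigma}=Z_{\psi^{\sigma}}(m,f^{\sigma},\chi^{\sigma})$, and the algebraicity statement then follows from the definition of the fixed field.

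The main obstacle, already dispatched by the machinery of Sections~\ref{eisensteinsection} and~\ref{specialvalues}, is twofold. First, the extended set $\Omega_{n,k}$ contains values $m\leq n$ at which holomorphic projection of $\theta_{\chi}\mathcal{E}(\cdot,\tfrac{2m-n}{4})$ no longer lands in $\mathcal{S}_k$; this is handled by replacing bounded growth with the moderate growth of Definition~\ref{moderategrowth} (verified via Proposition~\ref{newbounds} and, in the region $m\leq n$, the functional equation interchanging $\tfrac{2m-n}{4}$ with $\tfrac{n+1}{2}-\tfrac{2m-n}{4}$), projecting instead into $\mathcal{M}_k$ via Theorem~\ref{newholoproj}, and then isolating the cuspidal part through the decomposition of Corollary~\ref{maincoro}. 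Second, this very extraction of the cuspidal component is the step that enlarges the coefficient field of $K_{\mathcal{S}}(m,\theta_{\chi})$ from $\mathbb{Q}_{ab}(\theta_{\chi})$ to $\mathbb{Q}_{ab}(\theta_{\chi},\Lambda_{k,\psi},G(\psi),\zeta_{\star})$, and is precisely the reason that the $\sigma$-equivariance can only be asserted over $\Aut(\mathbb{C}/\mathbb{Q}(\Lambda_{k,\psi},G(\psi),\zeta_{\star}))$ rather than over $\Aut(\mathbb{C}/\mathbb{Q})$ as in Theorem~\ref{main}.
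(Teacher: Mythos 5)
Your proposal is correct and follows essentially the same route as the paper's own proof: identify $\Omega^{\pm}_{\theta_{\chi}}=\Omega_{n,k}^{\pm}$, invoke (\ref{projintexp}) together with Proposition~\ref{newerintegralexp} for $m\in\Omega_{n,k}^{+}$ outside the exceptional cases, and (\ref{newerprojintexp}) resp.\ (\ref{Rprojintexp}) otherwise, then conclude $\sigma$-equivariance over $\Aut(\mathbb{C}/\mathbb{Q}(\Lambda_{k,\psi},G(\psi),\zeta_{\star}))$ from Theorem~\ref{improvedinnerprod}. Your additional remarks on moderate growth, the functional equation for $m\leq n$, and the source of the field enlargement are accurate glosses on the supporting results rather than deviations from the argument.
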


\begin{proof} Note that $\Omega^{\pm}_{\theta_{\chi}} = \Omega_{n, k}^{\pm}$. If $m\in\Omega_{n, k}^+$ and we are not in cases (\ref{R1}, \ref{R2}) then combine the integral expression of (\ref{projintexp}) with Proposition \ref{newerintegralexp} whereas, if $m\in\Omega_{n, k}^-$ (resp. (\ref{R1}, \ref{R2})), then use the integral expression of (\ref{newerprojintexp}) (resp. (\ref{Rprojintexp})) directly. This gives
\[
	Z_{\psi}(m, f, \chi) \in c_f(\tau, 1)^{-1}\Lambda_{\mfrak{c}, \mfrak{y}}(m, \eta)\prod_{p\in\mathbf{b}}g_p\left((\psi^{\mfrak{c}}\chi^*)(p)p^{-m}\right)\frac{\langle f, K_{\mathcal{S}}(m, \theta_{\chi})\rangle}{\mu(\Lambda, k, \psi)}\mathbb{Q},
\]
which is evidently $\sigma$-equivariant over $\Aut(\mathbb{C}/\mathbb{Q}(\Lambda_{k, \psi}, G(\psi), \zeta_{\star}))$ by Theorem \ref{improvedinnerprod}.
\end{proof}

\begin{remark} In all cases we actually have $c_m = n(k+m-n)$ (putting $\ell = \frac{n}{2} + \mu$ in the definitions of $\beta_m$ and $r$). They are therefore integers and agree with the powers of $\pi$ present in Theorem 28.8 of \cite{Shimurabook}. We present the powers of $\pi$ as a sum of its constituents in order to clarify the proofs of the main results throughout this paper.
\end{remark}

\begin{acknowledgements} I would like to thank my PhD supervisor Thanasis Bouganis 
for the direction and guidance of this paper, and for generally keeping my head 
screwed on through its intricacies. Funding was provided by Engineering and Physical Sciences Research Council
(Grant No. 000118421).
\end{acknowledgements}

\end{document}